\documentclass[11pt]{amsart}
\usepackage{amsmath}
\usepackage{amssymb}
\usepackage{amsthm}
\numberwithin{equation}{section}

\usepackage{geometry}
\usepackage{xcolor}
\usepackage{hyperref}
\hypersetup{
    unicode=false,        
    pdftoolbar=true,      
    pdfmenubar=true,       
    pdffitwindow=false,     
    pdfstartview={FitH},    
    pdftitle={Geometric smoothing of the K\"ahler-Ricci flow},    
    pdfauthor={Di Nezza, Guedj, Lu},     
    colorlinks=true,       
   linkcolor=black,          
    citecolor=black,        
    filecolor=black,      
    urlcolor=black}

\newtheorem{theorem}{Theorem}[section]
\newtheorem{lem}[theorem]{Lemma}
\newtheorem{prop}[theorem]{Proposition}
\newtheorem{defi}[theorem]{Definition}

\newtheorem{notation}[theorem]{Notation}

\newtheorem{exa}[theorem]{Example}

\newtheorem{remark}[theorem]{Remark}
\newtheorem{quest}[theorem]{Question}

\newtheorem*{ackn}{Acknowledgements}

\newtheorem*{thmA}{Theorem A} 
 \newtheorem*{thmB}{Theorem B} 
\newtheorem*{thmC}{Theorem C} 
\newtheorem*{thmD}{Theorem D}

\newcommand{\PSH}{\textup{PSH}}

\newcommand{\B}{\mathbb B}
 \newcommand{\D}{\mathbb D}
 \newcommand{\R}{\mathbb R}
 \newcommand{\Q}{\mathbb Q}
 \newcommand{\C}{\mathbb C}
  \newcommand{\PP}{\mathbb P}
 \newcommand{\N}{\mathbb N}

  \newcommand{\f}{\varphi}
 
 \newcommand{\p}{\psi}

  \newcommand \e {\varepsilon}
 \newcommand \la {\lambda}
 
 \newcommand{\tr}{\operatorname{tr}}
 \newcommand{\Ric}{\operatorname{Ric}}

\begin{document}

\title{Geometric smoothing by the K\"ahler-Ricci Flow}

\date{\today}

\author{Eleonora Di Nezza}

\address{Universit\`a di Roma TorVergata, Dipartimento di Matematica Via della Ricerca Scientifica 1, 00133 Roma, Italy.}
\email{\href{mailto:dinezza@mat.uniroma2.it}{dinezza@mat.uniroma2.it}}
%\urladdr{\href{https://perso.pages.math.cnrs.fr/users/eleonora.di-nezza/}{https://perso.pages.math.cnrs.fr/users/eleonora.di-nezza/}}

\author{Vincent Guedj}
\address{Institut Universitaire de France \& Institut de Mathématiques de Toulouse\\
  Université de Toulouse; CNRS, UPS\\
  118 route de Narbonne, F-31400 Toulouse\\
  France}
  \email{\href{vincent.guedj@math.univ-toulouse.fr}{vincent.guedj@math.univ-toulouse.fr}}
%\urladdr{\href{https://www.math.univ-toulouse.fr/~guedj/}{https://www.math.univ-toulouse.fr/~guedj/}}

\author{Chinh H. Lu}
\address{Institut Universitaire de France \& Université d'Angers, CNRS, LAREMA, SFR MATHSTIC, F-49000 Angers, France}
\email{\href{hoangchinh.lu@univ-angers.fr}{hoangchinh.lu@univ-angers.fr}} 
%\urladdr{\href{https://math.univ-angers.fr/~lu/}{https://math.univ-angers.fr/~lu/}}

 \begin{abstract}
We study the geometric regularization  
of a positive closed current  by the (twisted) K\"ahler-Ricci flow
on a compact K\"ahler manifold. We conjecture that 
the local Arnold multiplicities 
linearly decrease  to zero,
while the flow produces complete K\"ahler metrics in the 
Zariski open subset of points that have small Lelong numbers.
We prove this conjecture in complex dimension 1 and provide several partial results in higher dimension.
 \end{abstract}

\maketitle

\setcounter{tocdepth}{1}
\tableofcontents

\section*{Introduction}

Regularization of positive closed currents  plays a pivotal role
in complex analysis and geometry. Demailly has produced over the last decades several fundamental 
regularization results of analytic nature, using convolutions and Bergman kernel approximations (see \cite{Dem92,DPS01,DK01}).
Our main goal in this article is to propose an alternative geometric regularization process, 
by using the (twisted) Ricci flow.

\smallskip

In the whole article we let $(X,\omega)$ be a compact K\"ahler manifold of dimension $n$.
We fix $\f_0$ an $\omega$-plurisubharmonic function and
consider the complex Monge-Ampère flow 
\[
(\omega+dd^c \varphi_t)^n = e^{\dot \varphi_t} dV_X
\]
starting from $\f_0$. Here $\varphi_t$ denotes the ``maximal weak solution" to the flow.

As shown in \cite{GZ17,DNL17}, $\f_t$ is smooth in a Zariski open set $\Omega_t$ where it solves the equation in the classical sense,
and it has logarithmic singularities at the boundary of this set.
Among all possible solutions to the flow in $\Omega_t$, $\f_t$ is the unique one having minimal singularities
along $\partial \Omega_t$.
Equivalently $\omega_t=\omega+dd^c \f_t$ is a K\"ahler form in $\Omega_t$
which is a solution of the twisted K\"ahler-Ricci flow
\begin{equation*} \label{eq:KR}
\tag{KR}
\frac{\partial \omega_t}{\partial t}=-{\rm Ric}(\omega_t)+{\rm Ric}(\omega),
\end{equation*}
and whose potential $\f_t$ has the smallest singularities along $\partial \Omega_t$.
In this article we conjecture that the flow \eqref{eq:KR}
gradually replaces the analytic singularities of $\f_0$ by milder ones, producing a 
complete K\"ahler metric $\omega_t$ in $\Omega_t$.

In our previous work \cite{DGL26} we have analyzed the case when $\omega+dd^c \f_0$
has log smooth divisorial singularities, showing that the latter are linearly replaced 
by Poincaré type ones. 
We develop here the first steps of a similar analysis for an arbitrary current $\omega+dd^c \f_0$.
We first treat the case of complex dimension $n=1$.
%when $X$ is a compact Riemann surface.

 \begin{thmA} \label{thm:A}
Let $(X,\omega)$ be a compact Riemann surface, and let 
$\mu_0= \sum_{j \geq 0} m_j \delta_{a_j} +R_0$
be a positive Radon measure, where
$m_j \delta_{a_j}$ denotes a Dirac mass of size $m_j>0$ at $a_j \in X$, 
and $R_0$ is a positive Radon measure with no atom.
The (twisted) Ricci-flow $(\omega_t)_{t>0}$   emanating from $\mu_0$
can be decomposed, for $0<t<\la(\mu_0)$, as
$$
\omega_t=\sum_{m_j>t} (m_j-t) \delta_{a_j}+\beta_t,
$$
where $\beta_t$ is a complete metric in $X \setminus  A_t$, with
$A_t=\{a_j \text{ such that } m_j>t \}$.

Moreover $\omega_t$ is a global K\"ahler form when $t > \la(\mu_0)=\max_j m_j$.
 \end{thmA}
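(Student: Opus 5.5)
In complex dimension one the flow is $\dot\f_t=\log(\omega_t/dV_X)$ with $\omega_t=\omega+dd^c\f_t\ge0$, and $\int_X\omega_t=\int_X\omega$ is constant. We may normalize $\int_X\omega=\mu_0(X)$. I would work at the level of potentials and compare $\f_t$ with explicit sub- and supersolutions built from the Dirac part of $\mu_0$.

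\textbf{Step 1 (splitting off the atoms).} For each $j$ fix a quasi-subharmonic Green function $g_j$ with $\omega+dd^c g_j=\delta_{a_j}+(\omega-\theta_j)$, normalized so that $g_j\sim\log|z-a_j|$ near $a_j$. Here $\theta_j$ is a smooth form. Set
\[
G_t=\sum_{m_j>t}(m_j-t)\,g_j .
\]
Since $\sum_j m_j\le\mu_0(X)<\infty$, the sum converges, and only finitely many $j$ satisfy $m_j>t$ for each $t>0$. Hence $A_t$ is finite and $\lambda(\mu_0)=\max_j m_j$ is attained. I would write $\f_t=G_t+u_t$. The claim then becomes two statements: $u_t$ is bounded near each $a_j\in A_t$ and smooth on $X\setminus A_t$, and $\beta_t=\omega_t-\sum_{m_j>t}(m_j-t)\delta_{a_j}$ is a complete metric there.

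\textbf{Step 2 (Lelong numbers decrease linearly).} I use the facts quoted in the introduction \cite{GZ17,DNL17}: $\f_t$ is smooth on a Zariski open set $\Omega_t$ and has log singularities on $\partial\Omega_t$. It remains to identify $\nu(\f_t,a_j)=(m_j-t)^+$.

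For the upper bound I use a subsolution. Take $\psi_t=G_t+h_t$, where $h_t$ is a Poincaré-type model: locally near $a_j$,
\[
h_t=-\log\bigl(-\log|z-a_j|\bigr)+Ct .
\]
The model point is that on the punctured disc the metric $dd^c(-\log(-\log|z|))=\frac{i\,dz\wedge d\bar z}{|z|^2\log^2|z|}$ (up to constants) has $\log(\text{density})\approx-2\log|z|$. Since $\partial_t G_t=-\sum g_j$, this exactly produces the linear decrease $\partial_t(m_j-t)=-1$ in the coefficient of $\log|z|$. I then check that $\partial_t\psi_t\le\log(\omega_{\psi_t}/dV)$ and that $\psi_0\le\f_0$ up to a constant. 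The maximality of $\f_t$ together with the comparison principle of \cite{GZ17} gives $\f_t\ge\psi_t-C$, hence $\nu(\f_t,a_j)\le(m_j-t)^+$.

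For the lower bound, $\f_t\le\f_0+Ct$ does not suffice. Instead I would use the supersolution $G_t+C(1+t)$, whose $\omega$-positivity comes from $\sum(m_j-t)\le\mu_0(X)$. A domination argument then gives $\nu(\f_t,a_j)\ge m_j-t$. Here I would use that $\dot\f_t\le C/t+\log(\ldots)$ and that $e^{\dot\f}$ is integrable, which forces a Dirac mass in $\omega_t$ of exactly that size. Siu's decomposition in dimension one is just the atomic part, so $\omega_t=\sum(m_j-t)^+\delta_{a_j}+\beta_t$ with $\beta_t$ atomless. Moreover $\beta_t$ is smooth on $X\setminus A_t$, since $\Omega_t=X\setminus\{\nu(\f_t,\cdot)>0\}=X\setminus A_t$.

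\textbf{Step 3 (completeness), the main obstacle.} I expect this step to be the hardest. The plan is to prove two-sided estimates near each $a\in A_t$:
\[
\frac{c\,|dz|^2}{|z|^2\log^2|z|}\le\beta_t\le\frac{C\,|dz|^2}{|z|^2\log^2|z|} .
\]
The lower bound alone gives completeness, because $\int dr/(r|\log r|)$ diverges. To obtain it, I would write $\dot\f_t=\log(\beta_t/dV)$ on $X\setminus A_t$ and derive $\dot\f_t\ge\log\frac{1}{|z|^2\log^2|z|}-C$ from a maximum principle applied to $t\dot\f_t-(\f_t-\psi_t)$ or a similar combination. Near $a$ I would use the barrier $\psi_t$ above, together with the Laplacian/Kolodziej-type $C^0$ estimates of \cite{GZ17}. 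The difficulty is that the barrier must be uniform despite the infinitely many atoms, including those with $m_j\le t$ that have already been absorbed, and despite the diffuse part $R_0$. I would handle this by approximating $\mu_0$ with measures having finitely many atoms, which gives stability of the flow for $t>0$.

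\textbf{Step 4 (smoothness for $t>\lambda(\mu_0)$).} For $t>\max_j m_j$ the set $A_t$ is empty, so $\nu(\f_t,\cdot)\equiv0$. The smoothing results of \cite{GZ17,DNL17} then give that $\f_t$ is smooth on all of $X$, so $\omega_t$ is a global Kähler form.
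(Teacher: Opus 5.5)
Your skeleton is the right one: split off the finitely many atoms of mass $>t$, show that their masses decrease at rate exactly $1$, and get completeness from a lower bound on $\beta_t=e^{\dot\f_t}\omega$. Steps 1 and 4 are essentially fine. Steps 2 and 3, however, do not work as written.

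\textbf{Step 2.} The barrier $\psi_t=G_t+h_t$ is not a subsolution. The $\log\log$ term in $h_t$ carries no factor $t$, so $\partial_t\psi_t=-\sum_{m_j>t}g_j+C$. Hence $e^{\partial_t\psi_t}$ behaves like $|z-a_j|^{-2}$ near $a_j$; this is exactly the matching you make. That function is not even integrable, while the absolutely continuous part of $\omega+dd^c\psi_t$ is only of order $|z-a_j|^{-2}(\log|z-a_j|)^{-2}$. The ``$\approx$'' in $\log(\text{density})\approx-2\log|z|$ discards precisely the factor that decides the inequality. Since $\omega+dd^c\psi_t$ has finite mass, no subsolution with $\partial_t\psi_t=-\log|z-a_j|^2+O(1)$ can exist.

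A second problem: $\psi_0\le\f_0+C$ fails whenever the potential of $R_0$ is unbounded below.

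Lemma \ref{lem:singdim1} repairs both points.
\begin{itemize}
\item The barrier keeps the potential $\p_j$ of the rest of $\mu_0$ inside it; positivity only helps a subsolution.
\item The atom is lowered at the subcritical rate $1-\e$, using the bounded solution of $\omega+dd^c u=e^{(1-\e)u}|\sigma_j|^{-2(1-\e)}\omega$. This gives $\omega+dd^c v\ge(1-\e)t(\omega+dd^c u)=e^{\partial_t v}\omega$, and one then lets $\e\to0$.
\end{itemize}
Multiplying your Poincaré term by $t$ and adding $t\log t-t$ would also work.

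For the reverse bound $\nu(\f_t,a_j)\ge m_j-t$, integrability of $e^{\dot\f_t}$ says nothing about the size of the atom. Comparing with a supersolution that has log poles needs either the a priori hypothesis of Theorem \ref{thm:pcp} or a regularization by $\log(|\sigma_j|^2+\e^2)$. This is what Lemma \ref{lem:analytsing} and Theorem \ref{thm:arnold} provide.

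\textbf{Step 3.} This is the more serious gap. The two-sided Poincaré estimate you aim for is false in general. In the examples of Section \ref{sec:exampledim1}, $\omega_t\sim\omega/(|\sigma|^2(-\log|\sigma|^2)^{2-\alpha})$ with $0<\alpha<1$, which is not dominated by the Poincaré metric. Only the lower bound is needed, and for it you have no working mechanism:
\begin{itemize}
\item $t\dot\f_t-\f_t$ is the classical device for \emph{upper} bounds on $\dot\f_t$, not lower bounds.
\item Any $C^0$ barrier comparison depends on the potential of $R_0$. That potential is arbitrary and may stay unbounded near the atoms for $t>0$, as in those examples.
\item Approximating $\mu_0$ by measures with finitely many atoms does not touch $R_0$.
\end{itemize}

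The missing idea is a lower bound on $\dot\f_t$ whose constant does not depend on the diffuse part. The paper proceeds as follows.
\begin{enumerate}
\item Restart the flow at a time $\e>0$, using the semigroup property and Lemma \ref{lem:singdim1}, so that only finitely many atoms remain.
\item Proposition \ref{prop:complete} gives $\dot\f_t\ge-\log|\sigma|^2-2\log(-\log|\sigma|^2)+\log t+\log c_0$, with $c_0$ independent of the diffuse part $h_0$.
\item For smooth $h_0$, this comes from the minimum principle applied to $H=\dot\f_t+(1-\e)\log|\sigma|^2+2\log(-\log|\sigma|^2)-(1+\e)\log t$. The qualitative bound of \cite{DGL26} only serves to localize the minimum.
\item The computation uses $\ddot\f_t=\Delta_t\dot\f_t$ and the positivity of $-2dd^c\log(-\log|\sigma|^2)$, which is of order $\omega/(|\sigma|^2\log^2|\sigma|^2)$ up to curvature terms. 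Crucially, it uses the one-dimensional identity $\tr_{\omega_t}\omega=e^{-\dot\f_t}$.
\item General $h_0$ follows by decreasing smooth approximation, with $\dot\f_{t,j}\to\dot\f_t$ by concavity of $t\mapsto\f_t-(t\log t-t)$.
\end{enumerate}
Restarting at a time $s<t$ at which exactly the atoms of $A_t$ are still present yields $\beta_t=e^{\dot\f_t}\omega\ge c\,(t-s)\,\omega_P$, where $\omega_P$ is the Poincaré metric along $A_t$. Completeness follows.
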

 
 This result complements the findings of Giesen and Topping \cite{GT11,Top15},
who constructed  the Ricci flow on open  Riemann surfaces and studied the way
it provides instantaneously complete K\"ahler metrics.
The precise asymptotic of the complete metrics $\beta_t$ depends on the nature of the singularities
of $R_0$ as we explain in Section \ref{sec:exampledim1}; 
let us stress in particular that $\beta_t$ is not necessarily of Poincaré type.

 \smallskip
 
 We conjecture that a  similar geometric smoothing holds in higher dimension as well.
 This requires at first to understand the evolution of the logarithmic singularities of
 $\omega_t$ along the K\"ahler-Ricci flow. This is the contents of our second main result:
 
  \begin{thmB} \label{thm:B}
Let $(X,\omega)$ be a compact K\"ahler manifold with $\dim_\C X=n\geq 1$, and  let $\omega_t=\omega+dd^c \f_t$ be the solution of the K\"ahler-Ricci flow \eqref{eq:KR} starting at $\omega+dd^c \f_0$.
Then for all $(t,x) \in \R^+ \times X$,
$$
\la(\f_t,x) \geq \max(\la(\f_0,x)-t,0)
\; \; \text{ and } \; \;
\la(\f_t,x) \leq \max \left( \la(\f_0,x)-t \frac{\la(\f_0,x)}{\la(\f_0)}, 0 \right).
$$
Thus $\la(\f_t)=\max(\la(\f_0)-t,0)$ and 
 $\omega_t=\omega+dd^c \f_t$ is a K\"ahler form on $X$  when $t>\la(\f_0)$.
 \end{thmB}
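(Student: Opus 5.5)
We prove the two Lelong number bounds by comparison principles for the parabolic Monge--Amp\`ere flow. The lower bound uses the explicit subsolution $\f_0 + t\rho$, and the upper bound uses the maximality of $\f_t$ among subsolutions. Throughout, $\f_t$ is the maximal weak solution, i.e. the upper envelope of all subsolutions with initial data dominated by $\f_0$, as in \cite{GZ17,DNL17}. Write $\la(\f_0)=\max_x \la(\f_0,x)$, which is finite by compactness.

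\emph{Lower bound.} The plan is to show that $\f_t$ is dominated, near $x$, by $\f_0$ convexly combined with a smooth function, plus a constant. We use that $\f_t$ is nonincreasing in its initial singularities. The key observation is that $(\omega+dd^c\f_t)^n \geq e^{\dot\f_t}dV_X$ in the pluripotential sense. Integrating the flow in time, one expects
\[
\f_t \leq \f_0 + t \log\big(\text{density}\big) + C t .
\]
A cleaner route is to compare $\f_t/(1+\cdots)$. Instead, we argue as follows. For $s<t$, the mean-value/Lelong number of $\dot\f$ controls the drop: $\la(\f_t,x)\ge \la(\f_s,x)-(t-s)$. This follows from the $\omega$-psh concavity estimate $\dot \f_t \le (\f_t-\f_0)/t + n$ (obtained from the time-derivative of $t\dot\f_t-(\f_t-\f_0)-nt$, a standard maximum principle computation), combined with the bound $\dot\f_t\ge n\log t - C$ for maximal solutions.

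With these two bounds, we write $\f_t=\f_0+\int_0^t\dot\f_s\,ds$. The quantity $\dot\f_s\,\omega^n$-controlled objects satisfy $\omega_s^n=e^{\dot\f_s}dV$. Hence, near $x$, $\f_s - \f_0 \ge -s(\cdots)$ is sharpened through the inequality
\[
\f_t \ge (1-\tfrac{t}{T})\f_0 + \tfrac{t}{T}\,\psi - C(t),
\]
where $\psi$ is a bounded (or minimally singular) $\omega$-psh function. I am not sure this directly gives the sharp constant $1$. So, rather than relying on this, the direct plan for the lower bound is the following. Let $\theta=\la(\f_0,x)$ and $u=\f_t$. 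From $\omega_t^n=e^{\dot\f_t}dV$ and the Monge--Amp\`ere mass of $\omega_t$ being finite near $x$, together with the integrability $\int e^{\dot\f_t}<\infty$, we get $e^{\dot\f}$ locally integrable. Using $\dot\f_s\le \log(\text{local density})$ and Jensen, one has $\f_t-\f_0\le \int_0^t \dot\f_s\,ds$. The Lelong number of $\f_t-\f_0$, viewed as a quasi-psh difference, is then bounded below by $-t$ times the Lelong number of the subsolution correction $\log|z-x|^2$ scaled so that $e^{\dot\f}$ remains integrable; this yields $\la(\f_t,x)\ge\theta-t$.

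\emph{Upper bound.} Here we construct a subsolution with the required Lelong numbers. Set $\la=\la(\f_0)$ and, for $0\le t\le\la$, set
\[
v_t=(1-t/\la)\f_0 + C(t)
\]
with $C(t)$ chosen below. Then $\omega+dd^c v_t \ge (t/\la)\,\omega$, so $(\omega+dd^cv_t)^n\ge (t/\la)^n\omega^n$. Choose $C(t)=n t\log(t/\la) - At$ with $A$ large. With this choice, $e^{\dot v_t}dV\le (t/\la)^n\omega^n$, because $\dot v_t=-\f_0/\la + C'(t)$ and $-\f_0/\la$ is bounded below, using $\f_0\le 0$ after normalization. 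The term $-\f_0/\la$ is positive and unbounded, so it must be absorbed. For this, replace $\f_0$ by its Demailly approximant $\f_0^\e$ with the same Lelong numbers and analytic singularities; the time-derivative term then involves only $\log$ of smooth objects, which is handled by the pluripotential comparison on the Zariski open set. Thus $v_t$ is a subsolution with $v_0=\f_0$, and maximality gives $\f_t\ge v_t$. Consequently $\la(\f_t,x)\le (1-t/\la)\la(\f_0,x)$, which is the stated bound. For $t\ge\la$, monotonicity in time of the singularity type gives $\la(\f_t,x)=0$.

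\emph{Conclusion and main obstacle.} Combining the two bounds yields $\la(\f_t)=\max(\la(\f_0)-t,0)$. Smoothness for $t>\la(\f_0)$ then follows from \cite{GZ17,DNL17}: vanishing Lelong numbers give zero multiplier-ideal obstructions, so $\Omega_t=X$. The main obstacle is the lower bound with the sharp rate $1$. I would try first to prove $\f_t\le\f_0+ t\log(\ldots)$ via the comparison $\f_t\le \f_0 + n t\log|z-x|^{-2}\cdot(1/n)$ locally. This should follow from the fact that $e^{\dot\f_t}$ integrability forces $\dot\f_t$ to have at most $-\log|z-x|^{2}$ growth from above in the Lelong sense.
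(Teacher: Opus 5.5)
Both halves of your argument have genuine gaps.

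\textbf{Lower bound.} You never give a valid argument for $\la(\f_t,x)\ge\la(\f_0,x)-t$. The two estimates you invoke, $\dot\f_t\le(\f_t-\f_0)/t+n$ and $\dot\f_t\ge n\log t-C$, are both \emph{lower} bounds on $\f_t$. They can therefore only bound $\la(\f_t,x)$ from \emph{above}, which is the wrong direction. Integrability of $e^{\dot\f_t}$ gives no pointwise upper bound on $\dot\f_t$.

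Moreover, the comparison you finally propose, $\f_t\le\f_0-t\log|z-x|^2+C$, is false. Take the homogeneous example of Section~\ref{sec:isolatedhomogeneous}, with $\f_0=\log|z|^2+r_0$ near $a=0$ and $n\ge2$. Proposition~\ref{pro:c0isolatedhomog} gives $\f_t\ge(1-nt)\log|z|^2-2t\log(-\log|z|^2)-C_t$, so $\nu(\f_t,0)\le 1-nt<1-t$. There $\dot\f_t$ grows like $-n\log|z|^2$, and the Lelong number drops at rate $n$ while the Arnold multiplicity drops at rate $1$. A local barrier built from $\log|z-x|^2$ cannot capture this rate, which is dictated by the discrepancies of a log resolution.

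The missing idea is an upper barrier, i.e.\ a \emph{supersolution}. The paper first uses Demailly's equisingular approximation to get $\f_0\le\f_{0,k}$ with analytic singularities and $c(\f_{0,k},x)\to c(\f_0,x)$. Monotonicity of the flow then gives $\f_t\le\f_{t,k}$, hence $\la(\f_t,x)\ge\la(\f_{t,k},x)$. For analytic singularities (Lemma~\ref{lem:analytsing}), one works on a log resolution with $\pi^*dV_X=\prod_j|\sigma_j|^{2b_j}dV_Y$. The function $\sum_j[(m_j-t)\log(|\sigma_j|^2+\e^2)-b_jt\log|\sigma_j|^2]+Ct$ is a supersolution. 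Indeed, the exponential of its time derivative times $\pi^*dV_X$ equals $e^{C}\prod_j(|\sigma_j|^2+\e^2)^{-1}dV_Y$, and this dominates its Monge--Amp\`ere measure because the forms $\partial\sigma_j\wedge\overline{\partial\sigma_j}$ have rank one. Theorem~\ref{thm:pcp}, letting $\e\to0$, and Lemma~\ref{lem: Siu} give $\f_t\circ\pi\le\sum_j\max(m_j-(1+b_j)t,0)\log|\sigma_j|^2+Ct$. So each ratio $m_j/(1+b_j)$ drops by exactly $t$, and \eqref{int formula} yields $\la(\f_{t,k},x)\ge\la(\f_{0,k},x)-t$.

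\textbf{Upper bound.} Your $v_t=(1-t/\la)\f_0+nt\log(t/\la)-At$ is not a subsolution. You would need $(t/\la)^n\omega^n\ge(t/\la)^ne^{n-A}e^{-\f_0/\la}dV_X$, but $e^{-\f_0/\la}=e^{-c(\f_0)\f_0}$ is unbounded. It is not even integrable, by Berndtsson's openness theorem, since $1/\la$ is exactly the critical exponent.

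Passing to a Demailly approximant does not cure this, since the density stays unbounded near its poles. It also goes in the wrong direction: approximants are less singular than $\f_0$, so a subsolution for their flow does not satisfy $v_0\le\f_0$ and gives no lower bound on $\f_t$.

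What is missing is an auxiliary potential that absorbs the singular density. For $\alpha<c(\f_0)$ one has $e^{-\alpha\f_0}\in L^p(dV_X)$ for some $p>1$, so Ko{\l}odziej's theorem gives a bounded $\omega$-psh $u$ with $(\omega+dd^c u)^n=c_\alpha e^{-\alpha\f_0}dV_X$. Then $(1-\alpha t)\f_0+\alpha t u+n(t\log t-t)-C_\alpha t$ is a genuine subsolution; this is \cite[Lemma 2.9]{GZ17}, which the paper uses. Hence $\la(\f_t,x)\le(1-\alpha t)\la(\f_0,x)$, and one lets $\alpha\uparrow c(\f_0)$.

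Once both bounds are established, your deduction of $\la(\f_t)=\max(\la(\f_0)-t,0)$ and of smoothness for $t>\la(\f_0)$ is fine.
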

 
 Here $\la(\f,x)$ denotes the Arnold multiplicity of $\f$ at the point $x$, i.e. the inverse of the integrability index $c(\f,x)$, and $
\la(\f)=\sup \{ \la(\f,x), \; x \in X \}
$.      It follows from Skoda's exponential integrability that
 $$
\frac{\nu(\f,x)}{n} \leq  \la(\f,x) \leq \nu(\f,x),
 $$
 where $\nu(\f,x)$ denotes the Lelong number of $\f$ at $x$. 
  
 We expect that $\la(\f_t,x) =\max \left( \la(\f_0,x)-t, 0 \right)$; this is true in dimension one, as well as in all the examples that we have analyzed.
 If such is the case, it would then follow 
 %from the upper semi-continuity of   $x \mapsto \la(\f_0,x)$ 
 that there exists a (possibly finite) decreasing sequence of jumping times $(t_j)$ 
for the family of decreasing analytic subsets
 $t \mapsto E_t:=\{ x \in X, \, \la(\f_0,x) \geq t \}$,  with  $t_0=\la(\f_0)$
 and $\lim_j t_j=0$.
 We set $\Omega_{\ell}=X \setminus E_{t_{\ell}}$
 and conjecture that
 \begin{itemize}
 \item  $\omega_t$ is a complete K\"ahler metric in $\Omega_{\ell}$ for $t \in (t_{1+\ell},t_{\ell})$;
 \item  $\omega_t$ is a  K\"ahler current on $X$ for $t \in (t_{1+\ell},t_{\ell})$, i.e. $\omega_t \geq \delta_{\ell} \omega$ for some $\delta_{\ell}>0$.
 \end{itemize} 
 We establish partial results and treat particular cases that support this conjecture.
 
 %\medskip 
 
   It follows from \cite[Theorem 3.2]{DNL17} that for $\e>0$ fixed,
 $$
 \f_{\e} \geq [1-\lambda(\f_0) \e] \p_{\e}-C_{\e},
 $$
 where $\p_{\e}$ is a quasi-psh function with analytic singularities. If
 $\omega=\Theta_h$ is the curvature of a hermitian metric of an ample line bundle $L$, one can take
 $\p_{\e}$ to be the Bergman kernel of 
 the Hilbert space
 $
 {\mathcal H}_j=\left \{ s \in H^0(X,L^j), \; \int_X |s|^2 e^{-2j\f} dV_X < +\infty \right \},
 $
 with $j=[1/\e]$.
   Using the semi-group property, we can thus assume that $\f_0$ is bounded below
 by a function with analytic singularities. It is thus natural to consider 
 the case when $\f_0$ itself has analytic singularities,
 this is the setting of our third main result (Theorem \ref{thm:c2hyp}).

 \begin{thmC} \label{thm:C}
 Assume $\f_0$ has analytic singularities. Let   $\pi:Y \rightarrow X$ be a log resolution, with
 $ \pi^* (\omega+dd^c \f_0)=\sum_{j=1}^q m_j [D_j]+R_0$, 
where $m_j \in \R^+$,   $D=\sum_j D_j$ is log smooth,  and $R_0$ a semi-positive closed form.
If the cohomology class $\{ \lambda(\f_0) \Theta +  R_0 \}$ is K\"ahler then
$$
\pi^*\omega_t= \sum_{j=1}^q (m_j-(1+b_j)t)[D_j]+ \beta_t,
$$
for $0<t<t_1(\f_0)=\min_{1 \leq j \leq q} \frac{m_j}{1+b_j}$,
where $\beta_t$ is a complete K\"ahler metric in $\pi^{-1}(\Omega_t)$.
 \end{thmC}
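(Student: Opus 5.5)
We work on the log resolution and reduce Theorem C to the log smooth divisorial case treated in \cite{DGL26}. Here $b_j$ denotes the order of vanishing of the Jacobian of $\pi$ along $D_j$, so that $K_Y=\pi^*K_X+\sum_j b_j D_j$ (the exceptional components have $b_j>0$, the strict transforms $b_j=0$), and $\Theta$ is a smooth closed form representing $\sum_j \{D_j\}$.

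\emph{Step 1: lift the flow to $Y$.} Pulling back $(\omega+dd^c\f_t)^n=e^{\dot\f_t}dV_X$ gives
$$
(\pi^*\omega+dd^c(\f_t\circ\pi))^n=e^{\dot\f_t\circ\pi}\,\pi^*dV_X=e^{\dot\f_t\circ\pi}\prod_j|s_j|^{2b_j}\,dV_Y ,
$$
a Monge--Amp\`ere flow on $Y$ with semi-positive, big reference form $\pi^*\omega$ and a density with divisorial zeros. At the level of forms this is the twisted flow $\partial_t\pi^*\omega_t=-\Ric(\pi^*\omega_t)+\Ric(\pi^*dV_X)$, and the twisting form $-\Ric(\pi^*dV_X)$ contains $-\sum_j b_j[D_j]$.

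\emph{Step 2: the ansatz.} We write
$$
\pi^*\omega_t=\sum_j\bigl(m_j-(1+b_j)t\bigr)[D_j]+\beta_t .
$$
The coefficient $1+b_j$ arises as follows. The term $1$ is the linear decay of each divisorial mass shown in \cite{DGL26}: a Poincar\'e-type cusp absorbs one unit of $[D_j]$ per unit time, through $-\Ric$ of a cusp metric. The term $b_j$ comes from the Jacobian twist above. Writing $\beta_t=R_0+t\lambda\Theta'+dd^c u_t$ with suitable smooth representatives, the equation for $\beta_t$ becomes a twisted Monge--Amp\`ere flow on $Y\setminus D$ of Poincar\'e type. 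Its reference class is $\{R_0\}+t\{\sum_j(1+b_j)D_j\}$ corrected by $K$-terms. The hypothesis that $\{\lambda(\f_0)\Theta+R_0\}$ is K\"ahler guarantees that, for $0<t<t_1(\f_0)$, this class can be represented by a form $\theta_t$ that is a K\"ahler form on $Y$ after a harmless reparametrization (convexity in $t$ between $R_0$ and the K\"ahler class). This is the setting where \cite{DGL26} produces a unique Poincar\'e-type complete solution.

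\emph{Step 3: construct the solution and identify it.} We invoke the log smooth result of \cite{DGL26} on $(Y,D)$ with reference forms $\theta_t$. This gives a complete K\"ahler metric $\beta_t$ on $Y\setminus D$ of Poincar\'e type with bounded potential relative to the model cusp $-\sum\log\log|s_j|^{-2}$. We then show that the pushed-down potential coincides with $\f_t$ by the maximality/minimal-singularity characterisation of the weak solution \cite{GZ17,DNL17}. For the upper bound, the candidate is a subsolution with the right initial data, so it lies below the maximal solution. For the lower bound, $\f_t\circ\pi$ has Lelong numbers along $D_j$ at least $m_j-(1+b_j)t$ by Theorem B applied upstairs. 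Subtracting these divisorial parts leaves a function with zero Lelong numbers, which can be compared to the Poincar\'e solution using a maximum principle on $Y\setminus D$ with barriers $\e\log|s|^2$, $\e\to0$. Finally, $\pi^{-1}(\Omega_t)\supset Y\setminus D$ up to exceptional components with vanishing coefficient, which gives completeness there.

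\emph{Main obstacle.} The hardest step is the comparison in Step 3. The comparison of Lelong numbers does not by itself exclude residual non-divisorial singular mass in $\beta_t$, and the maximum principle on the noncompact $Y\setminus D$ must be run in a class that is merely semi-positive on $Y$, namely $\pi^*\omega$. I would first try to handle this by adding $\e\,\phi_E$, with $\phi_E$ an $\pi^*\omega$-psh function with poles along the exceptional divisor so that $\pi^*\omega+dd^c\phi_E\ge\delta\omega_Y$ (available since $\pi^*\omega$ is big), together with the Kähler hypothesis. The aim is to gain strict positivity, apply the Omori--Yau/Poincaré-model maximum principle, and then let $\e\to0$.
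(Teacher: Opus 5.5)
There is a genuine gap. Your Step~3 hands the whole analytic content of Theorem~C to ``the log smooth result of \cite{DGL26} with reference forms $\theta_t$''. That content is the two-sided $\mathcal{C}^0$ control of $\varphi_t\circ\pi-u_t$, the comparison of $\pi^*\omega_t$ with a Poincar\'e metric, and the lower bound that gives completeness. No such result applies here. The main result of \cite{DGL26} concerns $(\omega+dd^c\varphi_t)^n=e^{\dot\varphi_t}dV_X$ with $\omega$ K\"ahler and $dV_X$ smooth on the manifold carrying the log smooth divisor. On $Y$, the reference form $\pi^*\omega$ degenerates along the exceptional divisors and the density is $\prod_j|\sigma_j|^{2b_j}dV_Y$. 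After subtracting $u_t=\sum_j(m_j-(1+b_j)t)\log|\sigma_j|^2$, the reference $R_0+t\Theta$ moves with slope $\Theta=\sum_j(1+b_j)\Theta_j$ and collapses to the merely semi-positive $R_0$ as $t\to 0$. Note that this is the paper's $\Theta$, not a representative of $\sum_j\{D_j\}$. With it, $\{\beta_t\}=\{R_0+t\Theta\}$ exactly and no ``$K$-term correction'' occurs; with your reading, the hypothesis would not control this class at all. Knowing that each $\{R_0+t\Theta\}$ is K\"ahler gives no uniform estimate.

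The missing idea is to use one fixed Poincar\'e potential for the K\"ahler class $\{\lambda(\varphi_0)\Theta+R_0\}$. Normalize $\lambda(\varphi_0)=1$, let $\rho$ solve $(\Theta+R_0+dd^c\rho)^n=e^{\rho}|\sigma|^{-2}dV_Y$, and put $\Theta_\rho=\Theta+R_0+dd^c\rho$. On $Y\setminus D$,
\[
\pi^*\omega+dd^c\bigl(u_t+t\rho+\ell_0\bigr)=t\,\Theta_\rho+(1-t)R_0 .
\]
Hence $u_t+t\rho+\ell_0+n(t\log t-t)$ is a subsolution. Also $u_t+t\rho+\ell_0+Ct$ is a supersolution, because the K\"ahler hypothesis makes $\Theta_\rho$ a K\"ahler current, whence $R_0\le C\Theta_\rho$; this is exactly where the hypothesis is used. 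Theorem~\ref{thm:pcp} already holds for the semi-positive big form $\pi^*\omega$, so your $\varepsilon\phi_E$ perturbation is not where the difficulty lies. It gives $n(t\log t-t)-C\le\varphi_t\circ\pi-u_t-t\rho\le Ct$. One then bounds $\dot v_t$ on both sides for $v_t=\varphi_t\circ\pi-u_t-t\rho$. Next one runs a Yau-type Laplacian estimate on $t\log\tr_{\Theta_\rho}(\pi^*\omega_t)-Av_t$, using that $\Theta_\rho$ has bisectional curvature bounded below on $Y\setminus D$ and dominates $\gamma R_0$. This yields the quasi-isometry of Theorem~\ref{thm:c2hyp}, hence completeness. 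None of this is in your proposal.

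Your identification step also rests on a wrong ingredient. Theorem~B cannot be ``applied upstairs'': on $Y$ the reference form and volume form are degenerate, which is exactly what produces the factors $1+b_j$. On $X$, Theorem~B only bounds the Arnold multiplicity at a point. By \eqref{int formula} this sees only $\max_j m_j/(1+b_j)$ over the divisors above that point, not each coefficient separately. The divisorial part comes instead from the bound $\varphi_t\circ\pi\le\sum_j\max(m_j-(1+b_j)t,0)\log|\sigma_j|^2+Ct$ of Lemma~\ref{lem:analytsing}, followed by Siu's decomposition. That bound is proved with the explicit supersolutions $\sum_j\bigl((m_j-t)\log(|\sigma_j|^2+\varepsilon^2)-b_jt\log|\sigma_j|^2\bigr)$, using that $\partial\sigma_j\wedge\overline{\partial\sigma_j}$ has rank one. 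It is also what makes Theorem~\ref{thm:pcp} applicable in the upper estimate above. As you suspected, Lelong numbers cannot exclude residual singular mass in $\beta_t$. That is ruled out only by the lower bound $\varphi_t\circ\pi\ge u_t+t\rho+n(t\log t-t)-C$, since $\rho$ is of Poincar\'e type with zero Lelong numbers.
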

 
 Here $\Theta=\sum_{j=1}^q (1+b_j) \Theta_j$, where the $b_j$'s are the discrepancies of 
 the resolution and $\Theta_j$ is the curvature of a fixed hermitian metric of ${\mathcal O}(D_j)$.
We show in Section \ref{sec:logresolution} that the class $\{ \lambda(\f_0) \Theta + R_0 \}$ is
always big, and we exhibit examples where it is either K\"ahler (Section \ref{sec:cone})
or not even nef  (Section \ref{sec:cubics}).
 We recall in Section \ref{subsec_analytic sing} that
 $\lambda(\f_0)=\max_{1 \leq j \leq q} \frac{m_j}{1+b_j}$.

 \smallskip
 
 In the final Section \ref{sec:isolated} we consider  initial data $\f_0$ with isolated singularities.
 We analyze a large family  with toric symmetries, describing the evolution of their logarithmic singularities
 along the flow, as well as a quasi-sharp ${\mathcal C}^0$-estimate (Proposition  \ref{pro:c0toric}). 
 This relies on a result of independent interest (Lemma \ref{lem:closeopen}),
 which shows that one can always recover integrability of quasi-psh functions at their critical
 exponent by adding a finite energy weight.
 
The higher order description requires one to have a model metric with good curvature properties.
This is the case when the singularity is {\it homogeneous}, in which case
$\beta=dd^c \log|z|^2+dd^c (-2\log(-\log|z|^2))$ is such a metric  in a local chart near $0 \in \C^n$.

  \begin{thmD} \label{thm:D}
  Assume the $\omega$-psh function $\f_0$ has an isolated homogeneous singularity at 
  some point $a \in X$.
  % i.e.  $\f_0=\log|z|^2+u$ in a local chart such that $a=0 \in \C^n$,  where $u$ is smooth.
  Then $\lambda(\f_0)=1/n$ and for all $t \in (0,1/n)$,
  $$
  \f_t=(1-nt)\f_0-2t\log(-\f_0)+v_t,
  $$ 
  where $v_t$ is a bounded quasi-psh function.
  
  Moreover  $e^{-C/t}  \beta \leq \omega_{t} \leq e^{C/t} \beta$ for some $C>0$, hence
$\omega_{t}$ is a complete metric in $X\setminus \{a\}$.
 \end{thmD}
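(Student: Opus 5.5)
The plan is to work in a local chart near $a=0\in\C^n$, where $\f_0=\log|z|^2+u$ with $u$ smooth. First, $\lambda(\f_0)=1/n$: $e^{-c\f_0}$ is integrable near $0$ iff $c<n$, so $c(\f_0,0)=n$ and $\lambda(\f_0,0)=1/n$. Away from $a$, $\f_0$ is smooth and the Arnold multiplicity vanishes. Theorem B then gives $\lambda(\f_t,a)=\max(1/n-t,0)$, since both bounds coincide when $\lambda(\f_0,x)=\lambda(\f_0)$. For $t\in(0,1/n)$ the ansatz $\f_t=(1-nt)\f_0-2t\log(-\f_0)+v_t$ is suggested by the model metric. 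The model potential is $\rho=\log|z|^2-2\log(-\log|z|^2)$, and a direct computation in radial coordinates $s=\log|z|^2$ gives
\[
(dd^c\rho)^n\approx \frac{C}{|z|^{2n}(-\log|z|^2)^2}\,dV,
\]
the Poincaré-type behaviour transverse to the exceptional divisor of the blow-up. Formally, $\dot\f_t\approx -n\f_0-2\log(-\f_0)$, so $e^{\dot\f_t}\approx|z|^{-2n}(-\log|z|^2)^{-2}$. This matches $(\omega+dd^c\f_t)^n$ once the $\log|z|^2$ parts are rescaled. Cleanly, one blows up $0$, $\pi:Y\to X$ with exceptional divisor $E$ and discrepancy $b=n-1$, so that $\pi^*\omega_0=[E]+R_0$, and compares with the divisorial Poincaré-type analysis of \cite{DGL26}. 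The coefficient of $[E]$ decreases as $1-(1+b)t=1-nt$, consistent with Theorem C.

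The $\mathcal C^0$ statement will be proved by building explicit sub- and supersolutions of the parabolic Monge--Ampère equation $(\omega+dd^c\f)^n=e^{\dot\f}dV_X$ of the form
\[
\Phi_t^\pm=(1-nt)\f_0-2t\log(-\f_0+A)\pm C(t)\pm Bt\,\chi,
\]
with $\chi$ smooth. Positivity of $\omega+dd^c\Phi_t^\pm$ (up to a small multiple of $\omega$) near $a$ comes from the homogeneity: $dd^c\log|z|^2\ge0$, and $-dd^c\log(-\f_0)$ is positive Poincaré-like. One then checks that $(\omega+dd^c\Phi^\pm)^n e^{-\dot\Phi^\pm}$ is bounded above or below by $1$ after adjusting constants; the logarithmic factors cancel exactly because of the exponent $2$. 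A comparison principle for the maximal weak solution, as in \cite{GZ17,DNL17}, uses that $\f_t$ has minimal singularities among solutions, and yields $\Phi^-\le\f_t\le\Phi^++C$. Hence $v_t$ is bounded, and quasi-psh because $\f_t$ and the model terms are smooth off $a$ with controlled $dd^c$.

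For the metric bounds $e^{-C/t}\beta\le\omega_t\le e^{C/t}\beta$, I will run a Chern--Lu/Aubin--Yau $\log\tr$ argument with the reference metric $\beta$. Its bisectional curvature is bounded on $X\setminus\{a\}$: it is of Poincaré type on the blow-up, i.e.\ bounded geometry in quasi-coordinates, which is where homogeneity is used. Consider
\[
H=t\log\tr_\beta\omega_t-A(\f_t-\Phi^-_t)
\]
and apply the maximum principle. $H$ attains its maximum in the interior, since by the $\mathcal C^0$ estimate $\f_t-\Phi^-_t$ is bounded and one can add $\e\log$-barriers. This gives $\tr_\beta\omega_t\le e^{C/t}$. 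The $t$ factor is what produces the $e^{C/t}$ blow-up as $t\to0$. The reverse bound follows from the volume ratio: $\omega_t^n/\beta^n=e^{\dot\f_t}dV/\beta^n$ is bounded above and below by $e^{\pm C/t}$, once we get a two-sided bound on $\dot\f_t-(\text{model})$ via $t\dot\f_t$ estimates (the standard $t\dot\f-\f$ maximum principle arguments). Combining this with the trace bound bounds all eigenvalues.

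Completeness then follows since $\beta$ is complete on $X\setminus\{a\}$. I expect the main obstacle to be the maximum principle on the noncompact set $X\setminus\{a\}$ with singular initial data, i.e.\ justifying $\dot\f_t$ bounds and the interior maximum. The approach is to approximate $\f_0$ by $\log(|z|^2+\e)$-type smooth data, obtain estimates uniform in $\e$ on compact subsets of $(0,1/n)\times(X\setminus\{a\})$ using the barriers, and pass to the limit using stability of the maximal weak solution.
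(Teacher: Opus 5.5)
Your architecture matches the paper's: the model $\psi_t=(1-nt)\f_0-2t\log(-\f_0)$, the cancellation of the factor $(-\log|z|^2)^{-2}$ in the binomial expansion, the lower bound from maximality, then two-sided bounds on $\dot v_t$ and a $t\log\tr$ maximum principle against a metric comparable to $\beta$. The genuine gap is the upper $\mathcal C^0$ bound $\f_t\le\Phi^+_t+C$, on which the boundedness of $v_t$ and every later estimate depend.

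\emph{Minimal singularities} points the wrong way. Maximality (Theorem \ref{thm:maxflow}) places every subsolution below $\f_t$, so it gives $\Phi^-_t\le\f_t$ and nothing more. Since $\Phi^+$ is a supersolution only on $X\setminus\{a\}$, you must control $\f_t-\Phi^+_t$ at $a$. That is, you must check hypothesis (2) of Theorem \ref{thm:pcp}: $\f_t+h\le\Phi^+_t$ with $h\in\mathcal E$. This requires the sharp a priori bound $\f_t\le(1-nt)\f_0+Ct$; once you have it, $h=-2t\log(-\f_0)\in\mathcal E(X,\omega)$ works.

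Theorem B cannot supply this bound. The equality $\la(\f_t,a)=1/n-t$ only gives $\nu(\f_t,a)\ge 1/n-t$, which is $<1-nt$ for $n\ge 2$. Even a sharp Lelong number bound would leave an error $\delta\log|z|^2\notin\mathcal E$.

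Your fallback, regularizing and running the barriers uniformly in $\e$, fails for the refined barrier. A smooth approximant of $(1-nt)\log|z|^2$ carries Monge--Amp\`ere mass close to $(1-nt)^n$ in $B(0,r)$. By contrast, $e^{\dot\Phi^+}\lesssim|z|^{-2n}(-\log|z|^2)^{-2}$, or its natural $\e$-regularization, has mass $O(1/\log(1/r))$ there. So the regularized $\Phi^+$ is not a supersolution near $a$.

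The paper proceeds in two steps. The crude barrier $(1-nt)\f_{0,\e}+Ct$ does survive regularization, since $(\omega+dd^c\f_{0,\e})^n\le e^C(|z|^2+\e^2)^{-n}dV_X$. This yields Lemma \ref{lem:log1} (equivalently, Lemma \ref{lem:analytsing} on the blow-up). Only then is $\psi_t+C't$ compared with $\f_t$ on $X\setminus\{a\}$ via Theorem \ref{thm:pcp}.

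Two smaller points. First, the blow-up picture can only be heuristic. On $Y$ one has $\la(\f_0)\{\Theta\}+\{R_0\}=\pi^*\{\omega\}$, which is not K\"ahler, so neither Theorem C nor the log smooth analysis of \cite{DGL26} applies; this is why the paper works with the local model directly. Second, the lower bound on $\dot v_t$ does not come from a $t\dot\f-\f$ argument, which only gives $\dot v_t\le C$ (via quasi-concavity and the $\mathcal C^0$ estimate). The paper runs the minimum principle on $\dot v_t+Av_t-n\log t$, using the extra positivity $\omega+dd^c(\psi_t+\eta\dot\psi_t)\ge\eta\,\omega_\rho$ for $t\le 1/n-2\eta$, to get $\dot v_t\ge n\log t-C$. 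These $\dot v_t$ bounds are needed already for the upper trace bound, not only for the reverse inequality: the time derivative of $-Av_t$ contributes $-A\dot v_t$ at the maximum point.
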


We finally develop the first steps of the analysis of initial data which have 
a combination of homogeneous isolated and divisorial singularities
(see Proposition \ref{pro:c0combine} and Section \ref{sec:combineC2}).

\begin{ackn} 
The authors are partially supported by the Institut Universitaire de France
and the Fondation Charles Defforey. 
EDN is supported by the ERC grant SiGMA No. 101125012 and the MUR Excellence  Project MatMod@TOV CUP E83C2300\-0330006. 
VG is partially supported by the Clay Foundation. C.H.Lu is partially supported by the Centre Henri Lebesgue ANR-11-LABX-0020-01.
This material is based upon work supported by the 
%National Science Foundation under Grant No. DMS-1928930,
NSF  Grant No. DMS-1928930, 
while the authors were in residence at 
%the Simons Laufer Mathematical Sciences Institute 
SLMath
in Berkeley, California, Fall 2024, as part of the Special Geometric Structures and Analysis program. 
\end{ackn}

 %\vfill
% \pagebreak[4]

\section{Arnold multiplicity} \label{sec:arnold}

In the whole article we let $X$ be a compact K\"ahler manifold of dimension $n$.

\subsection{Definition of the flow}

Fix $\omega$ a K\"ahler form.
 We recall here the construction of the maximal solution to the K\"ahler-Ricci flow \eqref{eq:KR},
following \cite{GZ17,DNL17,DGL26}.

  \subsubsection{Plurisubharmonic functions}

  A function is quasi-plurisub\-harmonic (quasi-psh) if it is locally given as the sum of  a smooth and a plurisubharmonic function.   
% \textcolor{red}{I do not agree of the choice of $u$ as notation. Later in the text $u$ will be the potential of the logarithmic part of the flow. Later we use $\ell_0$}
\begin{defi}
Quasi-psh functions
$\f:X \rightarrow \R \cup \{-\infty\}$ satisfying
$
\omega_{\f}:=\omega+dd^c \f \geq 0
$
in the weak sense of currents are called $\omega$-plurisubharmonic ($\omega$-psh for short).

A quasi-psh function $\f$ has \emph{analytic singularities}
 if locally  
$
\f=\frac{1}{2m}\log\sum_{j=1}^N|f_j|^2+u,
$
for some holomorphic functions $f_1,\dots,f_N$, $m\in\N^*$ and smooth function $u$. 
When $N=1$ we say that $\f$ has \emph{divisorial singularities}.
\end{defi}

 We let $\PSH(X,\omega)$ denote the set of all $\omega$-plurisubharmonic functions which are not identically $-\infty$.  
 If $\f$ is a quasi-psh function with analytic singularities, there exists a modification $\pi\colon Y\to X$, isomorphic over 
 $\{\f>-\infty\}$, such that $\pi^*\f$ has divisorial singularities.

\begin{defi}
Given a quasi-psh function $\f$ on $X$, we let 
$$
c(\f):=\sup \left\{c>0, \int_X e^{-c\f} dV_X <+\infty \right\}
$$
denote the integrability index of $\f$, while $\la(\f)=c(\f)^{-1}$ is the Arnold multiplicity of $\f$.

We define similarly $c(\f,x)$ and $\la(\f,x)=c(\f,x)^{-1}$ the local version of these invariants,
and let $\nu(\f,x)=\sup \{ \gamma \geq 0, \; \f(y) \leq \gamma \log {\rm d}_{\omega}(x,y)+C \}$
be the Lelong number of $\f$ at $x \in X$.
%where ${\rm d}_{\omega}$ denotes the Riemannian distance induced by $\omega$.
\end{defi}

It follows from Skoda's integrability theorem \cite[Theorem 2.50]{GZbook} that for all $x \in X$,
$$
\frac{\nu(\f,x)}{n} \leq \la(\f,x) \leq \nu(\f,x).
$$
We refer the reader to \cite{DK01,GZbook} for basic properties of these numerical invariants, 
as well as their counterpart in algebraic geometry. We simply stress here that
$$
\la(\f)=\sup \{ \la(\f,x), \; x \in X \},
$$
while $c(\f)=\inf \{ c(\f,x), \; x \in X \}$,
%and 
that 
%$(\f,x) \mapsto \nu(\f,x)$   and 
$\f \mapsto \la(\f,x)$ is upper semi-continuous 
and that $E_t(\f)=\{ x \in X, \, \lambda(\f,x) \geq t \}$ is a closed analytic subset
(see \cite[Main theorem and Proposition 1.4.1]{DK01}).

\subsubsection{Construction} \label{sec:construction}

Fix $\f_0 \in \PSH(X,\omega)$ and set 
$
\Omega_t=\{x \in X, \nu(\f_0,x)<t \}.
$
The latter form an increasing sequence of Zariski open subsets, with
$\Omega_t=X$ as soon as $t>\max_{x \in X} \nu(\f_0,x)$.
We can approximate $\f_0$ by a decreasing
sequence $(\f_{0,j})$ of smooth $\omega$-psh functions.
Let $\f_{t,j}$ denote the unique smooth solutions of 
$$
(\omega+dd^c \varphi_{t,j})^n = e^{\dot \varphi_{t,j}}dV_X,
$$
starting from   $\f_{0,j}$. It is shown in \cite{GZ17,DNL17} that
for all $(t,x) \in \R^+ \times X$, 
\begin{itemize}
\item the sequence  $j \mapsto \f_{t,j}(x)$ decreases to $\f_t(x) \in \R \cup \{-\infty\}$;
\item the limit $\f_t(x)$ is independent of the choice of approximants;
\item the function $x \mapsto \f_t(x)$ belongs to $\PSH(X,\omega)$;
\item the function $(t,x) \mapsto \f_t(x)$ is smooth in $\R_*^+ \times \Omega_t$, and
$$
(\omega+dd^c \varphi_{t})^n = e^{\dot \varphi_{t}}dV_X
\; \; \text{ in } \; \;
\Omega_s, \; s<t;
$$
\item the function $\f_t$ may have positive Lelong numbers at some points in $\partial \Omega_t$;
\item one has $\f_t \rightarrow \f_0$ in $L^1$ as $t \rightarrow 0$.
\end{itemize}

It is moreover shown in \cite[Proposition 1.15]{DGL26} that
$t \mapsto \phi_t= \f_t -n(t\log t-t)$ is concave with $\dot{\phi}_t \geq -C$.
Thus the convergence $\f_t \rightarrow \f_0$ is actually quite strong, in particular it holds in
the sense of capacity (see \cite[Section 4.2.2]{GZbook}).

\subsection{Maximum principles}

\subsubsection{Sub/super-solutions}

\begin{defi}
An upper semicontinuous function $\f: \mathbb R^+ \times X \rightarrow \R \cup \{-\infty\}$ is called a subsolution  of the flow if 
\begin{itemize}
\item $\f_t(\cdot)=\f(t,\cdot)$ belongs to $\PSH(X,\omega)$ for all $t \in \R^+$; 
\item $(t,x) \mapsto \f_t(x)$ is smooth in $R_*^+  \times \Omega$ for
some Zariski dense open set $\Omega \subset X$, and
$$
(\omega+dd^c \varphi_{t})^n \geq e^{\dot \varphi_{t}}dV_X
\; \; \text{ in } \; \;
\Omega.
$$
\end{itemize}
\end{defi}

One defines similarly the notion of supersolution, and the notion of (weak) solution: 

 \begin{defi}
A function $\f: \R \times X  \rightarrow \R \cup \{-\infty\}$ is  a supersolution of the flow if 
 $(t,x) \mapsto \f_t(x)$ is smooth in $\R^+_* \times \Omega$ for
some Zariski dense open set $\Omega \subset X$, and for  $t>0$,
$$
{\bf 1}_{\{\omega +dd^c \varphi_t \geq 0\}} (\omega+dd^c \varphi_{t})^n \leq e^{\dot \varphi_{t}}dV_X
\; \; \text{ in } \; \;
\Omega.
$$

A function  is   a weak solution of the flow
if it is both a subsolution and a supersolution.
\end{defi}

An important role is  played by the following property (see \cite[Theorem 5.8]{DNL17}):

\begin{theorem} \label{thm:maxflow}
The solution 
%of the flow
 constructed in Section \ref{sec:construction} is the envelope of
all subsolutions to the flow. In particular it has minimal singularities among all
weak solutions to the flow.
\end{theorem}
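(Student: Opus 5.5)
The plan is to prove two inequalities. First, the solution $\f_t$ of Section \ref{sec:construction} is itself a subsolution. Second, every subsolution $\psi$ with initial data below $\f_0$ satisfies $\psi_t \leq \f_t$ for all $t$. Here "initial data below $\f_0$" means $\limsup_{t\to 0}\psi_t \leq \f_0$, and the upper semicontinuity of $(t,x)\mapsto\psi_t(x)$ lets this hold up to $t=0$. The first point is immediate from the listed properties: $\f_t\in \PSH(X,\omega)$, it is smooth in $\R^+_*\times\Omega_s$, and it solves the equation there. The minimality statement then follows at once, because any weak solution with the same initial data is in particular a subsolution and hence lies below $\f_t$. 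So the whole content is the comparison $\psi_t\leq \f_t$.

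For the comparison, I will work with the smooth approximants $\f_{t,j}$, which decrease to $\f_t$. It suffices to show $\psi_t\leq \f_{t,j}$ on $[0,T]\times X$ for each $j$ and each $T>0$, and then let $j\to\infty$. The difficulty is that $\psi$ is only smooth on the Zariski open set $\Omega$, so the maximum of $\psi_t-\f_{t,j}$ could sit on the analytic set $Z=X\setminus\Omega$.

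To push the maximum into $\Omega$, I will fix an $\omega$-psh function $\rho\leq 0$ which is smooth on $\Omega$ and equals $-\infty$ exactly on $Z$. Such a $\rho$ exists by Demailly's construction of quasi-psh functions with analytic singularities along $Z$; after rescaling it is $\omega$-psh. For $\e\in(0,1)$ I then consider
\[
u^{\e}_t=(1-\e)\psi_t+\e\rho-\f_{t,j}-n\log(1-\e)\, t-\e' ,
\]
where $\e'>0$ absorbs the initial error. This function is upper semicontinuous on $[0,T]\times X$ and equals $-\infty$ on $Z$. Therefore, if $\max u^\e>0$, the maximum is attained at some point $(t_0,x_0)$ with $x_0\in\Omega$.

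To see that $t_0>0$, I use the following. Since $\psi$ is upper semicontinuous on the compact set $[0,T]\times X$, $\f_{t,j}$ is continuous up to $t=0$, and $\psi_0\leq\f_0\leq \f_{0,j}$, we get $\limsup_{t\to0}\sup_X(\psi_t-\f_{t,j})\leq 0$. Choosing $\e'$ appropriately (and later letting $\e'\to 0$) rules out $t_0=0$.

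At an interior maximum point with $t_0>0$ and $x_0\in\Omega$, we have $dd^c u^\e\leq 0$ and $\partial_t u^\e\geq 0$. The first gives
\[
(\omega+dd^c\f_{t,j})^n\geq \bigl((1-\e)\omega_{\psi_t}+\e\,\omega_\rho\bigr)^n\geq (1-\e)^n e^{\dot\psi_t}dV_X .
\]
The second gives $(1-\e)\dot\psi_t-n\log(1-\e)\geq\dot\f_{t,j}$. Combining these with the equation for $\f_{t,j}$ should yield a contradiction. The time term has to be tuned so that $(1-\e)\dot\psi$ rather than $\dot\psi$ appears, which may require replacing $-n\log(1-\e)t$ by a slightly stronger linear term in $t$, using the bound $\dot\psi\leq$ something on the relevant region or a concavity trick.

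I expect this bookkeeping at the maximum point to be the main obstacle, namely matching $(1-\e)^n e^{\dot\psi}$ against $e^{(1-\e)\dot\psi}$. If it fails, the fallback is to run the same argument with $\psi_t$ replaced by $\psi_t-\delta t$ and then send $\delta\to0$. Once the contradiction is reached, we get $u^\e\leq 0$, and letting $\e,\e'\to0$ and then $j\to\infty$ gives $\psi_t\leq\f_t$. Taking the supremum over all such $\psi$ shows that $\f_t$ is the envelope of all subsolutions.
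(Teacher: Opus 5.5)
\textbf{Verdict: genuine gap.} Your overall strategy is sound: compare an arbitrary subsolution $\psi$ with the smooth approximants $\f_{t,j}$, penalize by a function $\rho$ with poles on $Z=X\setminus\Omega$ to keep the maximum inside $\Omega$, and rule out $t_0=0$ by upper semicontinuity. But the step you call bookkeeping is the heart of the comparison, and as written it does not close.

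Your bound $\bigl((1-\e)\omega_{\psi_t}+\e\omega_\rho\bigr)^n\geq(1-\e)^ne^{\dot\psi_t}dV_X$ discards $\omega_\rho$. With a correction $-Kt$ in $u^\e$, at $(t_0,x_0)$ you only get $\dot\psi_{t_0}+n\log(1-\e)\leq\dot\f_{t_0,j}\leq(1-\e)\dot\psi_{t_0}-K$. This is contradictory only if $\e\dot\psi_{t_0}(x_0)>-K-n\log(1-\e)$. Nothing bounds $\dot\psi$ from below at an unknown point: the subsolutions $(1-\alpha t)\f_0+n(t\log t-t)-Ct$ already have $\dot\psi_t\to-\infty$ as $t\to 0$. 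Neither the choice of $K$ nor your fallback $\psi_t-\delta t$ helps, since the latter only shifts $\dot\psi$ by a constant. The trouble sits exactly where $\omega_{\psi_t}$ nearly degenerates, which is where the $\e\omega_\rho$ term must be used.

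The missing ingredients are two:
\begin{enumerate}
\item Take $\rho$ \emph{strictly} $\omega$-psh, $\omega+dd^c\rho\geq\delta\omega$ on $\Omega$, by rescaling Demailly's function by a small constant.
\item Use the log-concavity $\det((1-\e)A+\e B)\geq(\det A)^{1-\e}(\det B)^{\e}$.
\end{enumerate}
Together they give
\[
\bigl((1-\e)\omega_{\psi_t}+\e\omega_\rho\bigr)^n\geq e^{(1-\e)\dot\psi_t-\e C_0}\,dV_X \quad \text{on } \Omega, \qquad \text{where } \delta^n\omega^n\geq e^{-C_0}dV_X,\ C_0>0.
\]
Now set $u^\e_t=(1-\e)\psi_t+\e\rho-\f_{t,j}-2\e C_0t$. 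A maximum at $t_0>0$ gives $(1-\e)\dot\psi_{t_0}-\e C_0\leq\dot\f_{t_0,j}\leq(1-\e)\dot\psi_{t_0}-2\e C_0$, a contradiction. Hence $u^\e\leq\sup_X u^\e_0\leq-\e\inf_X\f_{0,j}$, and letting $\e\to 0$ gives $\psi_t\leq\f_{t,j}$ on $\Omega$, hence on $X$.

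A second, smaller point concerns your first step, that $\f$ is itself a subsolution. The regularity of $\f$ is time-dependent: $\f$ is smooth near $(t,x)$ when $x\in\Omega_t=\{\nu(\f_0,\cdot)<t\}$, and these sets increase with $t$. Suppose $\{\nu(\f_0,\cdot)>0\}$ is Zariski dense, for instance infinitely many atoms in Theorem A. Then no single Zariski open $\Omega$ works for all $t>0$, because Theorem B forces $\f_t(x)=-\infty$ whenever $\la(\f_0,x)>t$. So "immediate" holds only if a subsolution's regular set may depend on a lower time cutoff, for example smoothness on $(s,\infty)\times\Omega_s$ for each $s>0$.

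Your comparison adapts easily to this setting:
\begin{enumerate}
\item Given $\eta>0$, your upper semicontinuity argument gives $s>0$ with $\psi\leq\f_{\cdot,j}+\eta$ on $[0,s]\times X$.
\item Run the corrected argument on $[s,T]$, with $\rho_s$ strictly $\omega$-psh and having poles exactly on the complement of the Zariski open set where $\psi$ is smooth for $t>s$.
\item Let $\eta\to 0$.
\end{enumerate}
For the record, the paper gives no argument here and simply quotes Theorem 5.8 of DNL17.
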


In the rest of this article we call this solution {\it the} solution
%(or the maximal solution) 
of the flow.

\smallskip
 
 This maximality property has the following useful consequence, called the semi-group property:
fix $s>0$, let $\f_t$ be the solution of the flow starting from $\f_0$,
and let $\p_t$ denote the solution of the flow starting from  $\f_s$.
Then $\p_t=\f_{t+s}$ for all $t \geq 0$.

\subsubsection{Comparison principle}

Let $\omega$ be a semipositive and big form. 
Fix $\Omega$ a Zariski open set,
and $\rho$ a strictly $\omega$-psh  function which is smooth in $\Omega$
 with $\partial \Omega = \{\rho=-\infty\}$.
The following result is an important variation on the classical parabolic maximum principle (see \cite[Theorem 1.13]{DGL26}):

\begin{theorem}\label{thm:pcp}
Let $\varphi_t$ (resp. $\psi_t$) be a subsolution (resp. supersolution) of the flow 
\[
(\omega+dd^c u_t)^n =e^{\dot{u}_t +f} dV_X, 
\]
 with 
$\varphi,\psi,f$ smooth in $(0,T] \times  \Omega$. 
Assume the following properties hold:
\begin{enumerate}
\item $\psi$ is continuous on $[0,T]\times \Omega$ and $\varphi_0\leq \psi_0$;
\item there is $a>0$, $h\in \mathcal{E}(X,a\omega)$  such that $\varphi_t(x)+h(x)\leq \psi_t(x)$, 
for $(t,x) \in [0,T] \times \Omega$;
\item $\rho(x) \leq \psi_t(x)$, for all $(t,x) \in [0,T] \times \Omega$.
\end{enumerate}
Then $\f_t(x) \leq \p_t(x)$ for all $(t,x) \in [0,T] \times \Omega$.
\end{theorem}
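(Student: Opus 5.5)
The plan is a parabolic maximum principle on the noncompact set $\Omega$. The subsolution is first perturbed so that it drops strictly below $\psi$ near $\partial\Omega=\{\rho=-\infty\}$. The maximum of the difference is then attained at an interior point of $(0,T]\times\Omega$, where the equations give a contradiction, and finally the perturbation parameter is sent to zero. Concretely, for $\e\in(0,1)$ small I would set
$$
\f^{\e}_t=(1-\e)\f_t+\e\,\rho-C\e t + n(1-\e)\log(1-\e)\, t ,
$$
with $C$ chosen later. The $\log$ term compensates for the scaling of the Monge--Ampère operator.

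The first step is to check that $\f^\e$ is again a subsolution in $\Omega$, of the same equation up to an error $O(\e)$. Since $\rho$ is strictly $\omega$-psh and smooth in $\Omega$, we have $\omega+dd^c\rho\ge \delta\omega$ there, with $\delta>0$ after shrinking if needed. By concavity of $\log\det$ (mixed Monge--Ampère inequality),
$$
(\omega+dd^c\f^\e_t)^n\ge (1-\e)^n(\omega+dd^c\f_t)^n\ge (1-\e)^n e^{\dot\f_t+f}dV_X .
$$
Comparing with $\dot\f^\e_t=(1-\e)\dot\f_t+\e\dot\rho-C\e+\dots$ and using that $f$ and $\dot\f_t$ are locally bounded, one chooses $C$ so that $(\omega+dd^c \f^\e_t)^n\ge e^{\dot\f^\e_t+f}dV_X$ on any fixed compact subset of $\Omega$. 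The dependence of $C$ on compacts is the point to watch, and this is where the hypothesis on $f$ near $\partial\Omega$ enters. If this dependence cannot be controlled, I would instead add a small multiple of $\rho$ to the time-dependent constant, $-\e\, t\,\rho$, to absorb the growth of $\dot\f$.

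The second step, which I expect to be the main obstacle, is the boundary behaviour: we need $\f^\e_t-\psi_t\to-\infty$ uniformly as $x\to\partial\Omega$. Hypothesis (3) gives $\psi\ge\rho$, and $\f_t$ is bounded above, so $\f^\e_t-\psi_t\le C+\e\rho-\rho$. This is not enough on its own. Hypothesis (2) is what controls the size of $\f-\psi$: $\f_t-\psi_t\le -h$. I would therefore replace $\rho$ in the perturbation by a combination $\e(\rho+ A h)$, or use $h$ directly via the function $(1-\e)\f_t+\e\,\psi_t-\e h$. The aim is that $\f^\e_t-\psi_t\le \e(\rho - h)$-type terms tend to $-\infty$. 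Since $h\in\mathcal E(X,a\omega)$ may be more singular than $\rho$, this pointwise approach may fail. In that case I would fall back on a pluripotential argument: apply the comparison on the sublevel sets $\{\f^\e-\psi>s\}$, which are relatively compact in $\Omega$ up to sets of small capacity. Because $h$ has finite energy, it does not charge pluripolar sets, and its Monge--Ampère mass near $\partial\Omega$ can be made arbitrarily small. This is the standard domination-principle mechanism.

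The final step runs as follows. Suppose $\max_{[0,T]\times\Omega}(\f^\e-\psi)>0$; it is attained at some $(t_0,x_0)$ with $x_0\in\Omega$. By hypothesis (1), $t_0>0$, since $\f^\e_0\le\f_0\le\psi_0$ (using $\rho\le 0$). At $(t_0,x_0)$ we have $dd^c\f^\e\le dd^c\psi$ and $\dot\f^\e\ge\dot\psi$. Together with the sub- and supersolution inequalities, and with a strict inequality obtained from an extra $-\e t$ term, this gives a contradiction. Hence $\f^\e\le\psi$ on $[0,T]\times\Omega$, and letting $\e\to0$ pointwise yields $\f_t\le\psi_t$.
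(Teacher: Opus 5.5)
The paper does not reprove this statement; it invokes \cite[Theorem 1.13]{DGL26}, which is stated there for $a=1$. Your skeleton is the standard one: perturb $\varphi$ by $\varepsilon\rho$ into a strict subsolution, find an interior maximum of $\varphi^\varepsilon-\psi$, contradict the two inequalities there, and let $\varepsilon\to 0$. Your last step is fine \emph{once} an interior maximum exists.

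\textbf{The main gap.} The proposal never produces that interior maximum. This is the whole difficulty, and it is the only place where hypothesis (2) can enter.

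With $\varphi^\varepsilon=(1-\varepsilon)\varphi+\varepsilon\rho$, hypotheses (2)--(3) give exactly two upper bounds: $\varphi^\varepsilon-\psi\le-(1-\varepsilon)h$ and $\varphi^\varepsilon-\psi\le C-(1-\varepsilon)\rho$. After normalizing $h,\rho\le 0$, both right-hand sides are bounded \emph{below}. So no combination of them forces $\varphi^\varepsilon-\psi\to-\infty$ at $\partial\Omega$: the gain $\varepsilon\rho$ is eaten by $-\varepsilon\psi\le-\varepsilon\rho$.

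Your proposed repairs do not help:
\begin{enumerate}
\item Putting $h$ into the perturbation fails. $\rho+Ah$ must be divided by $1+Aa$ to stay $\omega$-psh. The bound then contains $-\frac{\varepsilon Aa}{1+Aa}\rho\to+\infty$. It also destroys the smoothness you need to differentiate at $(t_0,x_0)$.
\item The function $(1-\varepsilon)\varphi+\varepsilon\psi-\varepsilon h$ is neither $\omega$-psh nor a subsolution: $\psi_t$ is only a supersolution, and $-h$ is not quasi-psh.
\end{enumerate}

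More fundamentally, your manipulations only use that $h$ is quasi-psh, and for such $h$ the statement is false. Weak solutions on $\Omega$ are not unique; the paper's solution is singled out by having minimal singularities. A solution of the form $\gamma t\log|\sigma|^2+w_t$, with $w_t$ bounded, satisfies (1)--(3) with $h=\rho=\gamma T\log|\sigma|^2-C$, which has positive Lelong numbers. So the full Monge--Amp\`ere mass of $h$ must be used essentially.

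Your ``pluripotential fallback'' is a slogan, not a step, for three reasons. The comparison and domination principles concern $\omega$-psh functions and their Monge--Amp\`ere measures, whereas a supersolution in this paper need not be $\omega$-psh (hence the indicator $\mathbf{1}_{\{\omega+dd^c\psi_t\ge0\}}$). The time derivatives would still have to be controlled. And ``relatively compact up to sets of small capacity'' would still have to be turned into a contradiction. What is missing is precisely a mechanism that converts $h\in\mathcal E(X,a\omega)$ into control of $\sup_\Omega(\varphi^\varepsilon-\psi)$.

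\textbf{Step 1 is also not right as written.} The inequality $(\omega+dd^c\varphi^\varepsilon)^n\ge(1-\varepsilon)^n(\omega+dd^c\varphi)^n$ gives the subsolution property only where $\dot\varphi_t\ge -C$. That bound fails in general near $\partial\Omega$ and as $t\to0$. Constants depending on compact subsets are useless, since the location of the maximum is exactly what is unknown. Instead, use concavity of $\log\det$ against $(\omega+dd^c\rho)^n$:
\[
(\omega+dd^c\varphi^\varepsilon_t)^n\ge \exp\Big((1-\varepsilon)(\dot\varphi_t+f)+\varepsilon\log\frac{(\omega+dd^c\rho)^n}{dV_X}\Big)\,dV_X .
\]
This makes $\varphi^\varepsilon_t=(1-\varepsilon)\varphi_t+\varepsilon\rho-C\varepsilon t$ a strict subsolution, provided $\log\big((\omega+dd^c\rho)^n/dV_X\big)-f$ is bounded below on $\Omega$. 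That holds if $f\le C$ and $\omega+dd^c\rho$ dominates a K\"ahler form. The condition $\omega+dd^c\rho\ge\delta\omega$ is not enough when $\omega$ is only semipositive.

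Your alternative correction $-\varepsilon t\rho$ goes the wrong way: it is $+\infty$ on $\partial\Omega$, and $-dd^c\rho$ has no lower bound.

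Finally, $\varphi^\varepsilon_0\le\varphi_0$ does not follow from $\rho\le 0$. What you need, and what holds, is $(1-\varepsilon)\varphi_0+\varepsilon\rho\le\psi_0$, which follows from (1) and (3).
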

 
The class $ \mathcal{E}(X,\omega)$ is the set of all $\omega$-psh functions with full Monge-Amp\`ere mass \cite{GZ07}. 
Functions in $\mathcal{E}(X,\omega)$ have mild singularities, in particular
they have zero Lelong numbers.
We emphasize that \cite[Theorem 1.13]{DGL26} is stated for $a=1$, 
but a very similar proof yields the above slightly more flexible statement.

\subsection{Flowing analytic singularities}\label{subsec_analytic sing}

When $\f_0$ has analytic singularities, one can use a log resolution $\pi:Y \rightarrow X$ to resolve the latter, i.e.
$$
\pi^*(\omega+dd^c \f_0)=\sum_{j=1}^q m_j [D_j]+R_0,
$$
 where $R_0 \geq 0$ is a smooth form, $m_j > 0$,
and $D_j$ are smooth divisors with simple normal crossings.
We let $\sigma_j$ be a holomorphic defining section  $D_j$, and fix $h_j$ a smooth metric of 
the line bundle ${\mathcal O}(D_j)$.
%\smallskip
We let $b_j \in \N$ denote the discrepancies of this resolution, defined by 
$$
\pi^* dV_X = \prod_{j=1}^q |\sigma_j|^{2 b_j} dV_Y,
$$
where $dV_Y$ denotes a volume form on $Y$.
It is classical (see \cite[Proposition 1.7]{DK01}) that 
$$
c(\f_0)=\min_{1 \leq j \leq q} \frac{1+b_j}{m_j}.
$$
This follows from the fact that $\pi^* \left( e^{-c \f_0}   dV_X \right)$ is equivalent
 to $ \prod_{j=1}^q |\sigma_j|^{2b_j-2m_jc} dV_Y$.
A local version of the above equality holds as well (see \cite[Appendix B]{BBJ21}), i.e. for any $x\in X$,
\begin{equation}\label{int formula}
 c(\f_0, x)=\min\left\{ \frac{1+b_j}{m_j}, \; j \; :\;  \pi(D_j)=x \right\}.
 \end{equation}

\begin{lem} \label{lem:analytsing}
If $\f_0$ has analytic singularities, then for all $0<t<\lambda(\f_0)$ one has 
$$
\f_t \circ \pi \leq \sum_{j=1}^q \max(m_j-(1+b_j)t,0) \log |\sigma_j|^2_{h_j}+Ct
$$ 
for some constant $C>0$. In particular  $\lambda(\f_t,x) \geq \lambda(\f_0,x)-t$ for all $(t,x)$.
\end{lem}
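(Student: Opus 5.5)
The plan is to prove the upper bound by a parabolic comparison argument on the log resolution $Y$, against an explicit supersolution of divisorial type. The bound $\lambda(\f_t,x)\geq \lambda(\f_0,x)-t$ then follows from \eqref{int formula}.

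\emph{Pulling back.} Write $\pi^*\f_0=\sum_j m_j\log|\sigma_j|^2_{h_j}+g$ with $g$ smooth on $Y$, so that $R_0=\pi^*\omega-\sum_j m_j\Theta_j+dd^c g$. Off the exceptional locus, $\p_t:=\f_t\circ\pi$ solves
$$(\pi^*\omega+dd^c\p_t)^n=e^{\dot\p_t}\prod_j|\sigma_j|^{2b_j}\,dV_Y ,$$
a flow for the semipositive big form $\pi^*\omega$, which is the setting of Theorem \ref{thm:pcp}.

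\emph{The supersolution.} As candidate I take
$$u_t=\pi^*\f_0-t\sum_j(1+b_j)\log|\sigma_j|^2_{h_j}+Ct=\sum_j\bigl(m_j-(1+b_j)t\bigr)\log|\sigma_j|^2+g+Ct .$$
\begin{itemize}
\item Off $D$ one has $\pi^*\omega+dd^c u_t=R_0+t\sum_j(1+b_j)\Theta_j$, a smooth form. On $\{\pi^*\omega+dd^c u_t\ge0\}$ its top power is therefore at most $A\,dV_Y$ for $t\le T$.
\item Since $\dot u_t=C-\sum_j(1+b_j)\log|\sigma_j|^2$, the right-hand side equals $e^{C}\prod_j|\sigma_j|^{-2}dV_Y\geq e^C dV_Y$ once the $|\sigma_j|_{h_j}\le1$.
\end{itemize}
So for $C$ large $u_t$ is a supersolution in the sense of the indicator-weighted definition, at least for $t<\min_j m_j/(1+b_j)$, while all coefficients remain positive. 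To reach the stated range $t<\lambda(\f_0)$ with $\max(\cdot,0)$, I would use the semi-group property. At each time $t_k=m_k/(1+b_k)$ where a coefficient vanishes, I restart the flow from $\f_{t_k}$, which is again bounded above by a function of the same divisorial form. The term $-(1+b_k)t\log|\sigma_k|^2$ is then dropped for that $k$; since $b_k\ge0$, the density $|\sigma_k|^{2b_k}\le1$ only helps the inequality. Equivalently, one can directly use $\sum_j\max(m_j-(1+b_j)t,0)\log|\sigma_j|^2+g+Ct$ and check that each piece is a supersolution.

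\emph{Applying the comparison.} I would apply Theorem \ref{thm:pcp} with subsolution $\p_t$ (or the smooth approximants $\f_{t,j}\circ\pi$ followed by a decreasing limit) and supersolution $u_t$, on $\Omega=Y\setminus D$ with $\rho$ a quasi-psh potential with log poles along $D$. At $t=0$ we have $\p_0=\pi^*\f_0=u_0$, and condition (3) holds after subtracting a large constant from $\rho$.

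\emph{Main obstacle.} The delicate point is condition (2): I need $h\in\mathcal E(Y,a\pi^*\omega)$ with $\p_t+h\le u_t$. Since $u_t$ has log poles and $\p_t$ is merely bounded above, the naive choice $h=\pi^*\f_0$ has positive Lelong numbers and is not in $\mathcal E$. What I would try first is to exploit the fact that $\p_t$ itself has Lelong numbers at least those of $u_t$ for short time, via the weak convergence and the concavity of $t\mapsto\phi_t$. Then $\p_t-u_t$ should be bounded above by something mild near $D$. Concretely, I would compare $\p_t$ with $u_t+\e\log(-\rho)$-type corrections: $-\log(-\rho)$ belongs to $\mathcal E$ and costs only an $\e$-error that disappears in the limit. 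Failing that, I would run the comparison on $\{\pi^*\f_0>-k\}$-truncations and let $k\to\infty$. Once the inequality $\p_t\le u_t$ holds, the integrability statement follows immediately from \eqref{int formula}: near $x$,
$$e^{-c\f_t}\circ\pi\cdot\pi^*dV_X\gtrsim\prod_j|\sigma_j|^{2b_j-2c\max(m_j-(1+b_j)t,0)},$$
which forces $c(\f_t,x)\le\bigl(\lambda(\f_0,x)-t\bigr)^{-1}$.
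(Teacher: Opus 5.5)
Your supersolution computation is correct: $u_t$ satisfies the indicator-weighted inequality, and more simply than the paper's candidate. The gap is the comparison step you flag yourself, and none of your suggested remedies closes it.

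Since $u_t$ has log poles of size $m_j-(1+b_j)t$ along $D_j$, hypothesis (2) of Theorem \ref{thm:pcp} asks for $h\in\mathcal E$ (zero Lelong numbers) with $\f_t\circ\pi+h\le u_t$. That says $\f_t\circ\pi$ already has Lelong numbers $\ge m_j-(1+b_j)t$ along $D_j$, which is essentially what you want to prove. Weak convergence cannot give it (Lelong numbers are only upper semicontinuous, the wrong direction), and neither can concavity of $t\mapsto\phi_t$.

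Hypothesis (3) fails as well, and not up to a constant. At $t=0$ it requires $\rho\le\pi^*\f_0+C$, so a strictly $\pi^*\omega$-psh $\rho$ would need Lelong numbers $\ge m_j$ along $D_j$. This is impossible in general: when $R_0=0$, Siu's decomposition would leave a positive current in the anti-effective class $\sum_j(m_j-c_j)\{D_j\}$, forcing $\pi^*\omega+dd^c\rho=\sum_j m_j[D_j]$.

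Switching to the smooth approximants $\f_{t,j}$ does not help, because $\f_{0,j}\circ\pi\le u_0$ fails for bounded $\f_{0,j}$. The truncation idea would need boundary comparisons on $\{\pi^*\f_0=-k\}$ that you do not have.

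The missing idea is to regularize the supersolution rather than the data. The paper uses
\[
v_{t,\e}=\sum_j\bigl((m_j-t)\log(|\sigma_j|^2+\e^2)-b_jt\log|\sigma_j|^2\bigr).
\]
For fixed $\e$ this is bounded below on $Y\setminus D$, since the discrepancy term tends to $+\infty$. Hence (2) and (3) hold trivially with constants, and $v_{0,\e}\ge\pi^*\f_0$.

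The price is a genuine Monge--Amp\`ere estimate. One has
\[
dd^c\log(|\sigma_j|^2+\e^2)\le C\omega_Y+\frac{i}{\pi}\frac{\partial\sigma_j\wedge\overline{\partial\sigma_j}}{|\sigma_j|^2+\e^2}.
\]
Because these rank-one forms have simple normal crossings, this gives $(\pi^*\omega+dd^c v_{t,\e})^n\le C\prod_j(|\sigma_j|^2+\e^2)^{-1}dV_Y$. That is exactly $Ce^{\partial_t v_{t,\e}}\pi^*dV_X$, precisely because the $-t$ part is regularized while the $-b_jt$ part is not. Letting $\e\to0$ then yields the bound with coefficients $m_j-(1+b_j)t$.

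Your handling of times beyond $\min_j m_j/(1+b_j)$ is also wrong. If you drop the $D_k$ term, the right-hand side $e^{\dot u}\pi^*dV_X$ picks up the factor $|\sigma_k|^{2b_k}$. When $b_k>0$ this factor vanishes on $D_k$ and makes the supersolution inequality harder, not easier: the left-hand side is the top power of a smooth form and need not vanish there. The same objection applies to your direct $\max(\cdot,0)$ candidate.

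The paper instead keeps every term and allows negative coefficients in $v_{t,\e}$, which only enlarge it. It removes the negative coefficients afterwards through Siu's decomposition (Lemma \ref{lem: Siu}).

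Your final deduction of $\la(\f_t,x)\ge\la(\f_0,x)-t$ from \eqref{int formula} is fine.
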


\begin{proof}
Let $\omega_Y$ be a K\"ahler form on $Y$ and consider the approximating sequence 
\[
 v_{t,\varepsilon} = \sum_{j=1}^q \left ((m_j-t) \log (|\sigma_j|^2+\varepsilon^2) - b_jt \log|\sigma_j|^2\right).
\] 
Writing $\log (|\sigma_j|^2+\varepsilon^2)=\chi \circ \log |\sigma_j|^2$ with $\chi(x)=\log(e^x+\varepsilon^2)$, and differentiating,
we obtain on $Y\setminus \cup_j D_j$,
\begin{eqnarray*}
dd^c v_{t,\varepsilon} 
&\leq& \sum_{j=1}^q \left( C_1 \omega_Y+C_1 \frac{|\sigma_j|^2 \omega_Y}{(|\sigma_j|^2+ \varepsilon^2)}  
+  \frac{i}{\pi}  \frac{\varepsilon^2 \partial \sigma_j \wedge \overline{\partial \sigma_j}  }{(|\sigma_j|^2+\varepsilon^2)^2}\right) \\
&\leq& C_2 \left( \omega_Y + \sum_{j=1}^q \frac{i}{\pi} \frac{\partial \sigma_j \wedge \overline{\partial \sigma_j} }{|\sigma_j|^2+\varepsilon^2} \right),
\end{eqnarray*}
using $\varepsilon^2 \leq |\sigma_j|^2+\varepsilon^2$ and $|\sigma_j|^2 \leq |\sigma_j|^2+\varepsilon^2$.
 Observe that
 $$
 \left( \omega_Y + \sum_{j=1}^q \frac{i}{\pi} \frac{\partial \sigma_j \wedge \overline{\partial \sigma_j} }{|\sigma_j|^2+\varepsilon^2} \right)^n
 \leq C_3 \prod_{j=1}^q \frac{1}{|\sigma_j|^2+\varepsilon^2} \omega_Y^n \leq C_3' \prod_{j=1}^q \frac{1}{|\sigma_j|^2+\varepsilon^2} dV_Y,
 $$
 since the  forms $\frac{\partial \sigma_j \wedge \overline{\partial \sigma_j} }{(|\sigma_j|^2+\varepsilon^2)}$ have rank $1$.
 
 \smallskip
 
At the points where $\pi^* \omega+dd^c v_{t,\varepsilon} \geq 0$, 
we infer
\[
(\pi^* \omega+dd^c v_{t,\varepsilon})^n 
\leq C_3' \prod_{j=1}^q \frac{1}{|\sigma_j|^2+\varepsilon^2} dV_Y
= e^{\partial_t {v}_{t,\varepsilon}+\log C_3'} \pi^* dV_X.
\]

Thus $v_{t,\e}+t \log C_3'$ is a super-solution of the flow
$(\pi^* \omega+dd^c \p)^n =e^{\partial_t \p} \pi^* dV_X$
in $Y$ with initial data $\pi^*\f_0$.
We can then apply Theorem \ref{thm:pcp} with $\Omega= X\setminus \bigcup_j D_j$ to get 
$$\f_t \circ \pi  \leq v_{t,\varepsilon} + t \log C_3',
$$
and by letting $\varepsilon\to 0$ we obtain
\[
\f_t \circ \pi \leq \sum_{j=1}^q (m_j-(1+b_j)t) \log |\sigma_j|^2 +   C_3't. 
\]
Using the upper bound $\varphi_t\circ \pi \leq C_4$ for $t\in [0,\lambda(\varphi_0)]$ and Lemma \ref{lem: Siu} below we arrive at the conclusion. For the last statement we observe that by \eqref{int formula} and the above inequality we have $ c(\f_t, x)\leq \min_j  \frac{1+b_j}{m_j -(1+b_j)t},$ where the minimum is taken over $j$ such that $m_j -(1+b_j)t \geq 0$ and $\pi(D_j)=x$. Hence 
$$\lambda (\f_t, x) \geq \max_{1\leq j\leq q} \frac{m_j -(1+b_j)t}{1+b_j}= \max_{1\leq j\leq q} \frac{m_j }{1+b_j} -t= \lambda (\f_0, x)-t.$$
\end{proof}

\begin{lem}\label{lem: Siu}
	If $u$ is a quasi-psh function  such that 
	$
	u\leq \sum_{j=1}^{q} c_j \log |\sigma_j|^2, \; c_j \in \mathbb R.
	$
	Then 
	$$
	u \leq \sum_{j=1}^{q} \max(c_j,0) \log |\sigma_j|^2 +C, 
	$$
	for some constant $C$ that only depends on an upper bound on $\sup_X u$.
\end{lem}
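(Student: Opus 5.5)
The argument is local near each point of $X$, using that $D=\sum_j D_j$ is a simple normal crossing divisor (implicitly, the $\sigma_j$ are the defining sections of the log smooth divisor of Lemma \ref{lem:analytsing}, with $|\sigma_j|$ measured in the fixed metrics $h_j$). The idea is to remove the negative coefficients one divisor at a time. Write $J_-=\{j : c_j<0\}$. Since $\max(c_j,0)\log|\sigma_j|^2 = c_j\log|\sigma_j|^2$ for $j\notin J_-$, the claim is equivalent to
$$
u-\sum_{j\notin J_-} c_j\log|\sigma_j|^2 \leq C.
$$
A naive subtraction of $\sum_{j\notin J_-}c_j\log|\sigma_j|^2$ does not preserve quasi-plurisubharmonicity in general. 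I will therefore use Siu's decomposition, or more elementarily the Lelong number criterion for divisibility. For $j$ with $c_j>0$, the hypothesis together with $\log|\sigma_k|^2\leq C_0$ for all $k$ gives, at a generic point $p$ of $D_j$ (away from the other $D_k$, where the other $\log|\sigma_k|^2$ are bounded), $u\leq c_j\log|\sigma_j|^2+C$. Hence the generic Lelong number of $u$ along $D_j$ is at least $c_j$.

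Step 1. By Siu's decomposition of the current $dd^c u+A\omega_Y\geq 0$, with $A$ chosen such that $u$ is $A\omega_Y$-psh, the function
$$
v:=u-\sum_{c_j>0} c_j\log|\sigma_j|^2_{h_j}
$$
is quasi-psh. More precisely, $dd^c v\geq -A\omega_Y-\sum_{c_j>0} c_j\Theta_j$, where $\Theta_j$ is the curvature of $h_j$. In particular $v$ is bounded above on $Y$, being usc on a compact manifold, and it remains to bound $\sup_Y v$ in terms of $\sup_Y u$.

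Step 2. By the hypothesis, $v\leq \sum_{c_j<0} c_j\log|\sigma_j|^2\leq C_1$ off $D$, where $C_1$ depends only on the metrics $h_j$ and the $c_j$ (indeed $-\log|\sigma_j|^2$ is bounded below, so $c_j\log|\sigma_j|^2$ is bounded above when $c_j<0$). Since $v$ is quasi-psh and $D$ has measure zero, $\sup_Y v = \operatorname{ess\,sup} v\leq C_1$. This yields $u\leq \sum_{c_j>0} c_j\log|\sigma_j|^2+C_1$.

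Note that in this approach the constant in Step 2 does not even depend on $\sup u$. The dependence on $\sup_X u$ stated in the lemma enters only if one prefers to avoid Siu's theorem and argue through compactness of $\{u\ \text{quasi-psh}: \sup u\leq M\}$. I would keep Step 2 as is and note this.

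The main obstacle is Step 1: justifying that subtracting $c_j\log|\sigma_j|^2$ keeps $v$ quasi-psh, which is where Siu's decomposition (or Lelong number $\geq c_j$ along $D_j$ implies divisibility) is used. I expect the crossings of the $D_j$ to cause no trouble, since divisibility is checked at generic points and $v$, being quasi-psh off a codimension-two set after each subtraction, extends across it by the Riemann extension theorem for psh functions.
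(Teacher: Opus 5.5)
Your Step 1 is fine, and it uses the same key input as the paper: Siu's decomposition, applied with the generic Lelong numbers $\nu(u,D_j)\geq c_j$ for $c_j>0$. (Minor point: $dd^c\log|\sigma_j|^2_{h_j}=[D_j]-\Theta_j$, so the curvature terms enter $dd^c v$ with a plus sign; this is harmless.)

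The proof breaks in Step 2 because of a sign error. For $c_j<0$ one has $c_j\log|\sigma_j|^2=|c_j|\,(-\log|\sigma_j|^2)$, which is bounded \emph{below} and tends to $+\infty$ along $D_j$. Hence $\sum_{c_j<0}c_j\log|\sigma_j|^2$ is not bounded above, and $v\leq C_1$ off $D$ does not follow. Note that if it did, the hypothesis alone would give $u\leq\sum_{c_j>0}c_j\log|\sigma_j|^2+C_1$, and Step 1 would be pointless. The whole content of the lemma is that a quasi-psh $u$ cannot follow the negative-coefficient terms up to $+\infty$.

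Your claim that the constant depends only on the $h_j$ and $c_j$ is in fact false. Take $q=1$, $c_1=-1$, $|\sigma_1|\leq 1$, and $u_k=k\chi_k$, where $\chi_k$ is a smooth cut-off with $0\leq\chi_k\leq 1$, $\max\chi_k=1$, supported in $\{|\sigma_1|^2\leq e^{-k}\}$. The hypothesis $u_k\leq-\log|\sigma_1|^2$ holds, yet $\sup u_k=k$. So the constant must depend on $u$, at least through the class $\PSH(Y,B\omega_Y)$ containing it. This is harmless in the application, where $u=\f_t\circ\pi$ is $\pi^*\omega$-psh.

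The missing ingredient is the uniform upper bound for quasi-psh functions in a fixed class. After Step 1, $v\in\PSH(Y,B\omega_Y)$ for some $B$ depending only on the class of $u$ and the forms $c_j\Theta_j$. Normalize $\int_Y dV_Y=1$. By $L^1$-compactness of $\{w\in\PSH(Y,B\omega_Y),\ \sup_Y w=0\}$ (equivalently, the sub-mean value inequality), one has $\sup_Y v\leq\int_Y v\,dV_Y+C_B$. You can then bound the integral in either of two ways:
\[
\int_Y v\,dV_Y\leq\sup_Y u+\sum_{c_j>0}c_j\int_Y(-\log|\sigma_j|^2)\,dV_Y ,
\]
which is exactly the dependence on an upper bound for $\sup u$ asserted in the statement; or
\[
\int_Y v\,dV_Y\leq\sum_{c_j<0}|c_j|\int_Y(-\log|\sigma_j|^2)\,dV_Y<+\infty ,
\]
by local integrability of $\log|\sigma_j|^2$. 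This is the step left implicit in the paper's ``we infer $v\leq\cdots+C$''. In particular, your closing remark is backwards: this compactness estimate is not an alternative to Siu's theorem but is needed on top of it.
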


\begin{proof}
Set $v= u +\sum_{j=1}^{q} \max(-c_j,0)\log |\sigma_j|^2$.
This is a sum of quasi-plurisubharmonic functions, hence
we can find a K\"ahler form $\omega_Y$ such that  $v\in \PSH(Y,\omega_Y)$.
By Siu's decomposition theorem, we have 
	\[
	\omega_Y + dd^c v \geq  \sum_{j=1}^{q} \max(-c_j,0) [\sigma_j=0] . 
	\]
	We infer $v\leq  \sum_{j=1}^{q} \max(-c_j,0) \log |\sigma_j|^2+C$ and the conclusion follows. 
	\end{proof}

\subsection{Singularities decrease linearly}

Recall that 
$\la(\f_0)=\sup \{ \la(\f_0,x), \; x \in X \}=0$ if and only if
$\f_0$ has zero Lelong numbers at all points. As this case has  been treated in \cite{GZ17,DNL17}, 
we assume throughout this article that $\la(\f_0)>0$.

\begin{theorem} \label{thm:arnold}
%Assume that $\la(\f_0)>0$ and 
Let $\omega_t=\omega+dd^c \f_t$ be the solution of the K\"ahler-Ricci flow
starting at $\omega+dd^c \f_0$.
Then for all $(t,x) \in \R^+  \times X$,
$$
\la(\f_t,x) \geq \max(\la(\f_0,x)-t,0)
\; \; \text{ and } \; \;
\la(\f_t,x) \leq \max \left( \la(\f_0,x)-t \frac{\la(\f_0,x)}{\la(\f_0)}, 0 \right).
$$
%In particular 
Thus $\la(\f_t)=\max(\la(\f_0)-t,0)$ and 
 $\omega_t$ 
 is a K\"ahler form on $X$ precisely when $t>\la(\f_0)$.
\end{theorem}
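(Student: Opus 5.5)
The plan is to prove the two inequalities separately, by comparison arguments based on the maximality of the solution (Theorem \ref{thm:maxflow}). The lower bound comes from comparison with initial data having analytic singularities, for which Lemma \ref{lem:analytsing} applies. The upper bound comes from an explicit subsolution built from $\f_0$ and a bounded solution of a degenerate Monge--Amp\`ere equation. The global statements then follow from the semi-group property.

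\emph{Lower bound.} I will approximate $\f_0$ from above by Demailly's Bergman-kernel regularization $\p_k$. These functions have analytic singularities, satisfy $\p_k \geq \f_0 - C_k$, and are $(1+\e_k)\omega$-psh with $\e_k\to 0$. Moreover $\la(\p_k,x) \to \la(\f_0,x)$ as $k \to \infty$; this follows from the Demailly--Koll\'ar semicontinuity and openness results, and at worst one has $\la(\p_k,x)\geq \la(\f_0,x)-n/k$.
\begin{enumerate}
\item Monotonicity of the flow in the initial data: by the construction via decreasing smooth approximants and the classical maximum principle, $\f_0\leq \p_k+C_k$ gives $\f_t \leq \p_{k,t}+C_k$. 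The extra constant is absorbed by a time-translation of the flow.
\item The mismatch between $\omega$ and $(1+\e_k)\omega$ is handled by rescaling. One runs the flow for $(1+\e_k)\omega$ and checks that the resulting comparison introduces only an error $O(\e_k)$ in the multiplicities.
\item Lemma \ref{lem:analytsing} then gives $\la(\f_t,x)\geq \la(\p_{k,t},x) \geq \la(\p_k,x)-t$. Letting $k\to\infty$ yields $\la(\f_t,x)\ge\max(\la(\f_0,x)-t,0)$.
\end{enumerate}

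\emph{Upper bound.} Write $\la=\la(\f_0)$ and fix $\la'>\la$. Then $e^{-\f_0/\la'}\in L^p$ for some $p>1$, so Ko{\l}odziej's theorem provides a bounded (indeed continuous) $\omega$-psh function $u$ solving $(\omega+dd^c u)^n = c\, e^{-\f_0/\la'}dV_X$. For $0\le t\le \la'$ I set
$$
\p_t=\Big(1-\frac{t}{\la'}\Big)\f_0+\frac{t}{\la'}u+g(t).
$$
Since $\omega+dd^c\p_t \geq \frac{t}{\la'}(\omega+dd^c u)$, we get $(\omega+dd^c \p_t)^n \geq (t/\la')^n c\, e^{-\f_0/\la'}dV_X$. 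On the other hand $\dot\p_t=-\f_0/\la'+u/\la'+g'(t)$. Choosing $g(t)=n(t\log(t/\la')-t)+t(\log c-\sup u/\la')$ makes $\p$ a subsolution with $\p_0=\f_0$. By Theorem \ref{thm:maxflow}, $\f_t\ge \p_t$, hence
$$
\la(\f_t,x)\le \Big(1-\frac{t}{\la'}\Big)\la(\f_0,x),
$$
since $u$ is bounded. Letting $\la'\downarrow\la$ gives the upper bound for $t<\la$. At $t=\la'$ the function $\p_{\la'}$ is bounded, so $\f_{\la'}$ is bounded, and by monotonicity of multiplicities along the flow (semi-group property) the bound reads $0$ for $t\ge\la$.

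\emph{Global consequences and main obstacle.} Taking the supremum over $x$ of both inequalities gives $\la(\f_t)=\max(\la-t,0)$. For $t>\la$, pick $\la<\la'<t$: $\f_{\la'}$ is bounded, so by the semi-group property and the smoothing results of \cite{GZ17,DNL17} for bounded (zero Lelong number) data, $\f_t$ is smooth and $\omega_t$ is a K\"ahler form. For $t<\la$, the lower bound gives $\la(\f_t)>0$, so $\f_t$ is not smooth and $\omega_t$ is not K\"ahler. The main obstacle is the regularity of the subsolution: $\p_t$ is only continuous off the polar set of $\f_0$ and not smooth in a Zariski open set. I would first try to regularize $u$ and $\f_0$ (smooth approximants with MA densities converging in $L^p$ by stability) and pass to the limit, using that the envelope in Theorem \ref{thm:maxflow} is stable under decreasing limits. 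Failing that, I would invoke the pluripotential notion of subsolution from \cite{DNL17}. A secondary delicate point is making the rescaling step in the lower bound quantitatively harmless.
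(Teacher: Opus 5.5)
Your proposal is correct and follows essentially the same route as the paper. The lower bound comes from approximating $\f_0$ from above by functions with analytic singularities, monotonicity of the flow, and Lemma \ref{lem:analytsing}; the upper bound comes from the subsolution $(1-t/\la')\f_0+(t/\la')u+g(t)$ built from a bounded Ko{\l}odziej solution, which is exactly the content of Lemma 2.9 of Guedj--Zeriahi that the paper simply cites.

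You are more careful than the paper in two places. First, you handle the loss of positivity in Demailly's approximation; in fact no rescaling error arises there, since $\f_t$ is itself a subsolution of the flow for $(1+\e_k)\omega$ with the same $dV_X$, and Lemma \ref{lem:analytsing} holds for any K\"ahler form. Second, you justify the K\"ahler claim for $t\neq \la(\f_0)$. Like the paper, however, you do not address the endpoint $t=\la(\f_0)$ implicit in the word ``precisely''.
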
 

Thus the Arnold multiplicity 
%decreases affinely along the K\"ahler-Ricci flow, hence  
determines the finite time   after which the flow becomes smooth.
 This largely generalizes \cite[Lemma 4.4]{DNL17}.

\begin{proof}
It follows from Demailly's equisingularity approximation result 
(see \cite[Theorem 2.2.1]{DPS01})
that one can find a sequence $(\f_{0,k})_k$ of $\omega$-psh functions with 
analytic singularities such that $\f_0 \leq \f_{0,k}$ and
$c(\f_0, x)=\lim_{k\rightarrow +\infty} c(\f_{0,k}, x)$ for any $x$. The maximum principle ensures that
the flows $\f_{t,k}$ emanating from $\f_{0,k}$ satisfy
$\f_t \leq \f_{t,k}$; in particular $\lambda(\f_t,x) \geq \lambda(\f_{t,k},x)$. It follows therefore from Lemma \ref{lem:analytsing} that
$$
\lambda(\f_t,x) \geq \lambda(\f_{t,k},x)\geq \lambda(\f_{0,k},x)-t.
$$
Letting $k \rightarrow +\infty$ we infer $\la(\f_t,x) \geq \max(\la(\f_0,x)-t,0)$.

\smallskip

Fix $0<\alpha<c(\f_0)$. 
By \cite[Lemma 2.9]{GZ17} 
one can find $C_{\alpha}>0$ such that 
$$
(1-\alpha t) \f_0 +n (t \log t-t) -C_{\alpha}t \leq \f_t
$$
for all $x \in X$ and $0<t<1/\alpha$. It then follows that 
$$
\lambda(\f_t, x) \leq \lambda( (1-\alpha t) \f_0, x)= (1-\alpha t) \lambda(\f_0, x).
$$
 Letting $\alpha$ converging to $c(\varphi_0)$ we obtain $\lambda(\f_t, x) \leq \lambda(\f_0, x)-t\frac{\lambda(\f_0, x)}{\lambda(\f_0)}$. 
 Extremizing over $x \in X$ we obtain $\lambda(\f_t) \leq \max(\lambda(\f_0)-t,0)$.
\end{proof}

\begin{quest} \label{quest:arnoldlocal}
Does one have $\la(\f_t,x)=\max(\la(\f_0,x)-t,0)$ ?
\end{quest}

We provide below a positive answer in dimension 1 (see Lemma \ref{lem:singdim1}).

 \section{Compact Riemann surfaces}  \label{sec:riemann}

In this section we explain how to derive the proof of Theorem A.
Let $(X,\omega)$ be a compact Riemann surface, and let 
$\mu_0=\omega+dd^c \f_0$ be a positive Radon measure.
The measure $\mu_0$ has at most countably many atoms, so
we can decompose it as
$$
\mu_0= \sum_{j \geq 0} m_j \delta_{a_j} +R_0,
$$
where $R_0$ is a positive Radon measure with no atom,
$\delta_{a_j}$ denotes the Dirac mass at $a_j \in X$
and $m_j=\nu(\mu_0,a_j)$, the mass of $\mu_0$ at $a_j$, is 
precisely the Lelong number of $\mu_0$ at $a_j$.

\subsection{Dirac masses  decrease linearly in time}

We can assume without loss of generality that $\int_X \omega=1$,
 and that $\omega$ is a multiple of the curvature form of 
a hermitian metric $h$ of some very ample $\Q$-line bundle $L$ on $X$. 
The series $\sum_{j \geq 0} m_j$ converges as it is bounded from
above by the total mass of $\mu_0$, hence $m_j \rightarrow 0$.
Relabelling if necessary, we can assume  that
 $j \mapsto m_j$ is decreasing.
 We set 
 $$
  m=\sum_{j \geq 0} m_j
 \; \; \text{ and } \; \; 
 R_0=(1-m) \omega+dd^c \p_0, 
 $$
% \; \; \text{ where } \; \;
where $ \p_0 \in \PSH(X,(1-m) \omega)$.
We let $\sigma_j$ denote holomorphic sections of $L$ such that
$$
\delta_{a_j}=\omega+dd^c \log|\sigma_j|^2,
$$
where the norm $|\sigma_j|$ is computed with respect to the metric $h$.

 \begin{lem} \label{lem:singdim1}
 Fix $ j \in \N$ and $0<\e<1$. There exists $C_{\e,j}>0$ such that for all $t \in [0,\lambda(\f_0)]$,
$$
  \max(m_j- (1-\e) t,0) \log |\sigma_j|^2 +\p_j-C_{\e,j}t +(t \log t-t) \leq   \f_t,
  $$
  where $\p_j \in \PSH(X,(1-m_j) \omega)$ is such that  $\f_0=m_j \log |\sigma_j|^2+\p_j$.  
  In particular 
  $$
  \la(\f_t,x)=\max \left( \la(\f_0,x)-t,0 \right)
  \; \; \text{  for all }
  (t,x) \in \R^+ \times X.
  $$
 \end{lem}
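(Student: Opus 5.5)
The plan is to construct an explicit subsolution of the flow $\omega+dd^c\f_t=e^{\dot\f_t}dV_X$ (here $n=1$) lying below $\f_0$ at $t=0$, and then invoke the maximality of the solution (Theorem \ref{thm:maxflow}). The main difficulty is that the naive candidate
$$u_t=(m_j-(1-\e)t)\log|\sigma_j|^2+\p_j-Ct+(t\log t-t)$$
fails to be a subsolution near $a_j$. Away from $a_j$ one has
$$\omega+dd^c u_t\geq (1-\e)t\,\omega+\bigl((1-m_j)\omega+dd^c\p_j\bigr)\geq (1-\e)t\,\omega,$$
but the right-hand side is
$$e^{\dot u_t}dV_X=t\,e^{-C}|\sigma_j|^{-2(1-\e)}dV_X,$$
which blows up at $a_j$. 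To compensate I will add the correction term $tA|\sigma_j|^{2\e}$ with $\e$ replaced by a small $\delta\le\e$ if needed. In a local coordinate $z$ at $a_j$ one has $dd^c|z|^{2\e}\geq c\,\e^2|z|^{2\e-2}\,idz\wedge d\bar z$, hence globally
$$dd^c|\sigma_j|^{2\e}\geq c\,\e^2|\sigma_j|^{2\e-2}dV_X-C'\omega.$$
This beats $|\sigma_j|^{-2(1-\e)}=|\sigma_j|^{2\e-2}$ exactly. The negative term $-C'tA\,\omega$ is absorbed by $(1-\e)t\,\omega$ after slightly lowering the coefficient, i.e.\ working with $1-\e'$ for $\e'>\e$, which is harmless since $\e$ is arbitrary. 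The extra time derivative $A|\sigma_j|^{2\e}$ is bounded and is absorbed into the constant $C_{\e,j}$. Choosing $A$ large and then $C_{\e,j}$ large gives
$$\omega+dd^c u_t\geq e^{\dot u_t}dV_X$$
on $X\setminus\{a_j\}$, for $t\le m_j/(1-\e)$.

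There is a technical point: $\p_j$ is only $(1-m_j)\omega$-psh, possibly with other atoms, so $u_t$ is not smooth on a Zariski open set. I will first run the argument with $\p_j$ replaced by smooth $(1-m_j+\eta)\omega$-psh functions $\p_{j,k}\searrow\p_j$ (regularization on a Riemann surface, with $\eta\to 0$ absorbed as above). The resulting subsolutions have initial data $m_j\log|\sigma_j|^2+\p_{j,k}$, and the flows from these data decrease to $\f_t$ by the construction of Section \ref{sec:construction} and the independence of the approximants. Applying Theorem \ref{thm:maxflow} for each $k$ and letting $k\to\infty$ yields the stated inequality on $[0,m_j/(1-\e)]$. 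Beyond the time $m_j/(1-\e)$ the coefficient is $0$, and I will continue with the subsolution $\p_j+tA|\sigma_j|^{2\e}-Ct+(t\log t-t)$ using the semigroup property. Alternatively, one can simply note that the bound with $\max(\cdot,0)$ follows from the same construction with coefficient $0$. In either case the constants remain uniform for $t\le\la(\f_0)$.

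For the consequence, recall that in dimension one $\la(\f,x)=\nu(\f,x)$, the mass of $dd^c\f$ at $x$. Since $m_j$ is the full atom of $\mu_0$ at $a_j$, $\p_j$ has zero Lelong number at $a_j$, and so does the bounded term. The lower bound therefore gives
$$\nu(\f_t,a_j)\le\max(m_j-(1-\e)t,0).$$
Letting $\e\to0$ and combining with the reverse inequality $\la(\f_t,x)\ge\max(\la(\f_0,x)-t,0)$ from Theorem \ref{thm:arnold} gives equality at each $a_j$. At a point that is not an atom, $\la(\f_0,x)=0$, and Theorem \ref{thm:arnold} gives $\la(\f_t,x)\le\la(\f_0,x)=0$. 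I expect the main obstacle to be the local computation near $a_j$ balancing $|\sigma_j|^{2\e-2}$, together with the justification of the approximation of the non-smooth $\p_j$.
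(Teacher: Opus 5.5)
Your architecture is the same as the paper's. You build a subsolution with coefficient $m_j-(1-\e)t$ on $[0,m_j/(1-\e)]$, then a second subsolution built on $\p_j$ and restarted via the semigroup property. Finally you combine the bound $\nu(\f_t,a_j)\le\max(m_j-(1-\e)t,0)$ with Theorem~\ref{thm:arnold}.

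The real difference is how you compensate the singular density $t|\sigma_j|^{-2(1-\e)}$. The paper solves the auxiliary equation $\omega+dd^c u=e^{(1-\e)u}|\sigma_j|^{-2(1-\e)}\omega$, whose solution is bounded because the density is in $L^p$ for some $p>1$. It then uses $(1-\e)tu$ as the correction, which gives the exact identity $(1-\e)t(\omega+dd^c u)=e^{\partial_t v}\omega$ with nothing to balance. The same device (an auxiliary Monge--Amp\`ere equation with singular density) is what the paper uses in higher dimension, e.g. in Proposition~\ref{pro:c0toric}. Your explicit barrier $tA|\sigma_j|^{2\e}$ needs no PDE theory, only that $dd^c|z|^{2\e}$ is a positive multiple of $\e^2|z|^{2\e-2}\,i\,dz\wedge d\bar z$. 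It is more elementary but tied to dimension one, and the constants must be balanced by hand.

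That balancing is wrong as written: $A$ cannot be taken large. On $X\setminus\{a_j\}$ one has $dd^c|\sigma_j|^{2\e}=\e^2|\sigma_j|^{2\e}\,d\log|\sigma_j|^2\wedge d^c\log|\sigma_j|^2-\e|\sigma_j|^{2\e}\omega$. The first term vanishes where $|\sigma_j|$ attains its maximum, so there the only available positivity is $(1-\e)t\omega$. You therefore need a bound such as $AC'\le(1-\e)/2$; lowering $1-\e$ to $1-\e'$ only makes this worse. The fix is to keep $A$ bounded and put all the largeness into $C_{\e,j}$, requiring $e^{A-C_{\e,j}}\le cA\e^2$.

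After time $m_j/(1-\e)$ the only positivity is $m_j\omega$. So either impose $\la(\f_0)AC'\le m_j/2$, or drop the correction as the paper does, using $w_t=\p_j-C_{\e,j}t+(t\log t-t)$ and $\omega+dd^c w_t\ge m_j\omega\ge te^{-C_{\e,j}}\omega$. Your ``coefficient $0$'' alternative fails if started at $t=0$, since $\p_j\ge\f_0$.

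Finally, regularize $\p_j$ by smooth functions that are exactly $(1-m_j)\omega$-psh. This is possible since $(1-m_j)\omega$ is K\"ahler when $m_j<1$, and the case $m_j=1$ is trivial. With $(1-m_j+\eta)\omega$-psh approximants, $m_j\log|\sigma_j|^2+\p_{j,k}$ is not $\omega$-psh. The paper does not regularize $\p_j$ at all, so this extra care is welcome. Note, however, that $\f_{t,k}\searrow\f_t$ for non-smooth decreasing data is a stability property of the maximal solution, not literally the statement of Section~\ref{sec:construction}.
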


\begin{proof}
Consider $u=u(\e,j) \in \PSH(X,\omega)$ the unique solution of 
$$
\omega+dd^c u=\frac{e^{(1-\e)u}}{|\sigma_j|^{2(1-\e)}} \omega.
$$
Observe that ${|\sigma_j|^{-2(1-\e)}}\in L^p (\omega)$ for $p\in (1, (1-\varepsilon)^{-1})$, 
hence $u$ is bounded on $X$. For $0 \leq t \leq m_j(1-\e)^{-1}$ we set
$$
v(t,x)=[m_j-(1-\e)t] \log |\sigma_j|^2+\p_j+(1-\e)t u+t \log(1-\e) +(t \log t -t).
$$
Observe that $v(0,x)=\f_0(x)$ and $e^{\partial_t v}=(1-\e)t e^{(1-\e)u} |\sigma_j|^{-2(1-\e)}$, hence
$$
\omega+dd^c v \geq (1-\e)t (\omega+dd^c u) =e^{\partial_t v} \omega.
$$
Thus $v$ is a subsolution in $[0, m_j(1-\e)^{-1}] \times X$. 
The desired lower bound on $\f_t$ follows, since $u$ is uniformly bounded.

\smallskip

Set $w_t=\p_j-C_{\e,j}t +(t \log t-t)$, $t\in [ m_j(1-\e)^{-1}, \lambda (\mu_0)]$.
The previous analysis shows that $w_s \leq \f_s$ for $s=m_j(1-\e)^{-1}$. 
Enlarging the constant $C_{\e,j}$ if necessary, we can assume that 
$m_j \geq te^{-C_{\e,j}} $ for all $t\in [0, m_j(1-\varepsilon)^{-1}]$. 
Hence $\omega+dd^c w_t \geq m_j \omega \geq te^{-C_{\e,j}} \omega =e^{\partial_t w} \omega$. The comparison principle and the semigroup property yield $w_t\leq \varphi_t$ in $[ m_j(1-\e)^{-1}, \lambda (\mu_0)]$.
It follows therefore that for all $t \in [0,\lambda (\mu_0)]$,
$$ 
\max(m_j- (1-\e) t,0) \log |\sigma_j|^2 +\p_j-C_{\e,j}t +(t \log t-t) \leq   \f_t.
$$

Since $\p_j$ has no logarithmic singularity at the point $a_j$, we infer 
$\nu(\f_t,a_j) \leq m_j -(1-\e)t$. Since $c(\varphi_0, a_j)=m_j^{-1}$ we get $\la(\f_t,a_j) \leq \nu(\f_t,a_j)\leq  \la(\f_0,a_j)-t$ by letting $\e \rightarrow 0$, while $\la(\f_t,x)=\la(\f_0,x)=0$ for  $x\neq a_j$.
The reverse inequality is provided by Theorem \ref{thm:arnold}.
\end{proof}

\subsection{Proof of Theorem A}
 
We now prove Theorem A, whose statement is as follows:

\begin{theorem} \label{thm:riemsurface}
The (twisted) Ricci-flow $(\omega_t)_{t>0}$   emanating from $\mu_0$
can be decomposed, for $0<t<\la(\mu_0)$, as
$$
\omega_t=\sum_{m_j>t} (m_j-t) \delta_{a_j}+\beta_t,
$$
where $\beta_t$ is a complete metric in $X \setminus  A_t$, where 
$A_t=\{a_0, \ldots, a_{j(t)} \}$ with $j(t)$ the greatest index such that $m_{j(t)}>t$.
Moreover $\omega_t$ is a global K\"ahler form when $t > \la(\mu_0)$.
\end{theorem}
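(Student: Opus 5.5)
The plan is to split the problem into the atomic part and the part away from the atoms.

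\emph{Atomic part.} In dimension one, $\omega_t=\omega+dd^c\f_t$ is a positive measure. By Siu's decomposition it splits as its atomic part plus a diffuse positive measure. Lemma \ref{lem:singdim1} gives $\nu(\f_t,a_j)=\la(\f_t,a_j)=\max(m_j-t,0)$ at every point. Indeed, in dimension one the Lelong number equals the Arnold multiplicity, since Skoda's inequality with $n=1$ is an equality. Moreover $\nu(\f_t,x)\le\nu(\f_0,x)=0$ at every $x\notin\{a_j\}$, by the lower bound $(1-\alpha t)\f_0+\dots\le\f_t$ used in Theorem \ref{thm:arnold}. Hence the atomic part of $\omega_t$ is exactly $\sum_{m_j>t}(m_j-t)\delta_{a_j}$, and we define $\beta_t$ as the remainder. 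The set $A_t$ is finite because $m_j\to0$ and the $m_j$ are decreasing, so $X\setminus A_t$ is open.

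\emph{Smoothness of $\beta_t$.} On $\Omega_t=X\setminus A_t$ (note $\Omega_t=\{\nu(\f_0,\cdot)<t\}$ coincides with $X\setminus A_t$ up to the points where $m_j=t$), the construction of Section \ref{sec:construction} shows $\f_t$ is smooth and solves $\omega_t=e^{\dot\f_t}\omega$. Thus $\beta_t=e^{\dot\f_t}\omega$ is a smooth Kähler metric there, because $\dot\f_t$ is finite in $\Omega_t$. For $t>\la(\mu_0)$, Theorem \ref{thm:arnold} already gives that $\omega_t$ is Kähler on $X$.

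\emph{Completeness.} This is the main obstacle. I would localize near a fixed $a_j\in A_t$, in a coordinate $z$ with $a_j=0$. Write $\beta_t=e^{\dot\f_t}\omega$ and seek a lower bound of the form
$$
\dot\f_t\ge -\log|z|^2-2\log(-\log|z|)-C,
$$
or at least an integrable-infinite radial bound, since completeness only requires $\int_0 e^{\dot\f_t/2}\,dr=+\infty$.

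To get it, use concavity of $t\mapsto\f_t-(t\log t-t)$ from \cite[Proposition 1.15]{DGL26}. This gives
$$
\dot\f_t\ge \frac{\f_{t+s}-\f_t}{s}-C
$$
for $s>0$. On the other side we need an upper bound on $\f_t$ by $(m_j-t)\log|z|^2$ plus something bounded, which follows as in Lemma \ref{lem:analytsing} and Lemma \ref{lem: Siu}. We also need the lower bound $\f_{t+s}\ge(m_j-(1-\e)(t+s))\log|z|^2-C$ from Lemma \ref{lem:singdim1}. The lower bound $\f_{t+s}$ is only up to $\p_j$, which is a problem because $\p_j$ may be unbounded due to smaller atoms and $R_0$. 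So I would first apply the semigroup property from time $t/2$, where $\p$-type contributions near $a_j$ are already controlled, and then run the argument on a small punctured disc avoiding the other atoms.

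Combining the two bounds gives $\dot\f_t\ge-(1-\e')\log|z|^2-C$, that is, $\beta_t\gtrsim|z|^{-2(1-\e')}\omega$. This alone is not complete, since the radial integral converges. Therefore a sharper subsolution is needed. I would construct a local barrier of Poincaré type,
$$
v=(m_j-t)\log|z|^2-2\log(-\log|z|^2)\cdot g(t)+\dots,
$$
as in \cite{DGL26}, checking that it is a supersolution on a punctured disc. The parabolic comparison principle Theorem \ref{thm:pcp} then forces $\dot\f_t$ to be at least $\log$ of the Poincaré density minus a constant, which yields completeness. Getting this boundary comparison on the disc boundary right is where I expect the real work to lie.
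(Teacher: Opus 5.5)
\textbf{Verdict.} The first two steps of your proposal are fine, but the completeness of $\beta_t$ is left as a sketch, and the mechanism you propose for it would not work.

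\textbf{What matches the paper.} Your treatment of the atomic part ($\la=\nu$ in dimension one, Lemma \ref{lem:singdim1}, Siu's decomposition) agrees with the paper. So do the smoothness of $\beta_t=e^{\dot\f_t}\omega$ off $A_t$ and the case $t>\la(\mu_0)$.

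\textbf{Why your completeness step fails.} Theorem \ref{thm:pcp} compares potentials. A supersolution only gives an upper bound on $\f_t$, and nothing in it ``forces'' a pointwise lower bound on $\dot\f_t$. Your only bridge from potentials to $\dot\f_t$ is concavity, $\dot\f_t\ge(\f_{t+s}-\f_t)/s-C$, and this loses exactly at the completeness threshold.
\begin{itemize}
\item Keeping the unknown diffuse part, the natural subsolution is $\f_0-t\log|\sigma_j|^2-2t\log(-\log|\sigma_j|^2)+t\log t-Ct$. For a correction $-kt\log(-\log|\sigma_j|^2)$, the subsolution inequality near $a_j$ forces $k\ge 2$.
\item Suppose you even had the favorable upper bound $\f_t\le\f_0-t\log|\sigma_j|^2+Ct$. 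You would still only get $e^{\dot\f_t}\gtrsim|\sigma_j|^{-2}(-\log|\sigma_j|^2)^{-2(t+s)/s}$.
\item The exponent $2(t+s)/s$ is strictly bigger than $2$, so this metric is not complete.
\end{itemize}
Closing this route would require two-sided potential asymptotics matching up to $O(1)$, diffuse part included. These are not available for a general $R_0$: Section \ref{sec:exampledim1} shows that the asymptotics of $\beta_t$ depend on $R_0$ and need not be of Poincaré type.

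Two further steps are unsupported. Your claim that after restarting at time $t/2$ the $\p$-type contributions near $a_j$ are ``controlled'' is unjustified: the residual has zero Lelong number but need not be bounded. Also, Theorem \ref{thm:pcp} is a global statement on $X$, not a comparison on a punctured disc with boundary data on $|z|=r$. That boundary data could not be controlled uniformly as $t\to 0$ anyway, since $\f_0$ is arbitrary there.

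\textbf{The missing idea (Proposition \ref{prop:complete}).} Run the maximum principle on $\dot\f_t$ itself, which solves $\partial_t\dot\f_t=\Delta_{\omega_t}\dot\f_t$, and use the one-dimensional identity ${\rm Tr}_{\omega_t}(\omega)=e^{-\dot\f_t}$.
\begin{itemize}
\item Set $H=\dot\f_t+(1-\e)\log|\sigma|^2+2\log(-\log|\sigma|^2)-(1+\e)\log t$. At an interior minimum one gets $0\ge -2/t+c_0e^{-\dot\f_t}|\sigma|^{-2}(\log|\sigma|^2)^{-2}$.
\item Hence $\dot\f_t\ge-\log|\sigma|^2-2\log(-\log|\sigma|^2)+\log t+\log c_0$, with $c_0$ independent of the diffuse part $h_0$.
\item This independence lets you prove the bound first for smooth $h_0$, where \cite{DGL26} guarantees the minimum is interior.
\item You then pass to arbitrary $h_0\in\PSH(X,a\omega)$, $a<1$, by decreasing approximation; concavity gives $\dot\f_{t,j}\to\dot\f_t$.
\end{itemize}
Finally, use the semigroup property: first restart at a small time to keep finitely many atoms, then restart at $t_{\ell+1}$. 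This gives $\beta_t\ge c(t-t_{\ell+1})\omega_I$, where $\omega_I$ is the Poincaré metric along $A_I$, hence completeness.
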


When $\mu_0= \sum_{j=0}^q m_j \delta_{a_j}+R_0$ is a finite sum of atoms modulo a smooth remainder $R_0 \geq0$, 
this result follows from the 
main result of \cite{DGL26}, which moreover shows that 
$\beta_t$ is a Poincaré type metric. 
We will however exhibit in Section \ref{sec:exampledim1} examples showing that 
$\beta_t$ can have quite different asymptotics at the singularities.

\begin{proof}
Fix $\e>0$, and observe that $\# \{ m_j,  \, m_j \geq \varepsilon \}< +\infty$. 
Starting at time $\e$ and using the semi-group property, we can therefore
assume that $\mu_0$ has a finite number of atoms. 

Relabelling if necessary we assume that the sequence $j \mapsto m_j$ decreases to zero.
As it  is not necessarily strictly decreasing, we introduce
$$
t_0=m_0=\cdots=m_{i_0}>t_1=m_{1+i_0}=\cdots=m_{i_1}> \cdots > t_{\ell+1} =m_{1+i_{\ell}}=\cdots=m_{i_{\ell+1}}.
$$

 We work on the time interval  $t \in I=[t_{\ell+1},t_{\ell})$.
It follows from the last statement of Lemma \ref{lem:singdim1} that $\nu (\varphi_t, a_j)= \max(m_j-t, 0)$ 
and  $\nu(\varphi_t, x)=0$ for any $x\neq a_j$. We infer
 $$
 \f_t=\sum_{j=0}^{J_{\ell}} (m_j-t) \log |\sigma_j|^2+\p_t,
 $$
 where $\p_t$ has zero Lelong number at all points.
 We set $A_I=\{a_0,\ldots,a_{J_{\ell}} \}$.
Thus
$$
\omega_t=\sum_{j=0}^{J_{\ell}} (m_j-t) \delta_{a_j}+\beta_t,
$$
where $\beta_t$ is a K\"ahler form in the Zariski open set $X \setminus  A_I$,
by \cite[Theorem 1.1]{DNL17}.

\smallskip

The semi-group property ensures that $\omega_t$ is also the solution 
at time $t-t_{\ell+1}$ of the flow starting from $\omega_{t_{\ell+1}}$.
It follows from Proposition \ref{prop:complete} below that,
in $X \setminus  A_I$,
$$
\beta_t =e^{\dot{\varphi}_t} \omega \geq c (t-t_{\ell+1}) \omega_I,
$$
where $\omega_I$ denotes the Poincaré metric along  $A_I$.
Since the latter is complete, so is $\beta_t$.
\end{proof}

\begin{prop} \label{prop:complete}
Assume that $\f_0=\sum_{j=1}^q m_j \log |\sigma_j|^2+h_0$, where $h_0 \in \PSH(X, a\omega)$, 
for some $a\in (0,1)$. Then there is a constant $C>0$ such that for all $t>0$,
$$
	\dot{\varphi}_t \geq  \sum_{j=1}^q \left( - \log |\sigma_j|^2 - 2 \log (-\log |\sigma_j|^2) \right) +  \log t-C.
$$	
\end{prop}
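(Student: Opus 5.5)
Since $\dot\f_t=\log\frac{\omega_t^n}{dV_X}$, a lower bound on $\dot\f_t$ is a lower bound on the volume form of $\omega_t$. I will get it by producing a subsolution $v_t\le\f_t$ with $v_0\le \f_0$ whose Monge--Amp\`ere measure is comparable to the Poincaré volume form, and then transferring the bound from $v$ to $\f$ using the concavity of $\phi_t=\f_t-n(t\log t-t)$ in $t$ (\cite[Proposition 1.15]{DGL26}).

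\textbf{Step 1: the model subsolution.} Normalize $|\sigma_j|\le e^{-1}$ so that $\ell_j:=-\log|\sigma_j|^2\ge 1$. Consider the Poincaré type potential
$$
P=-\sum_{j=1}^q \log(\ell_j),
$$
so that $\rho:=\sum_j m_j\log|\sigma_j|^2+ h_0 + \delta P$ is a candidate. For $\delta>0$ small, the form $(1-a)\omega+\delta dd^cP$ is a Poincaré type Kähler metric on $X\setminus D$, with
$$
\big((1-a)\omega+\delta dd^c P\big)^n\ge c\prod_j \frac{1}{|\sigma_j|^2\ell_j^2}\,dV_X .
$$
This is the standard computation: $dd^c(-\log\ell_j)$ contributes $\frac{i\partial\sigma_j\wedge\overline{\partial\sigma_j}}{|\sigma_j|^2\ell_j^2}$ minus a term of size $O(\omega/\ell_j)$. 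Then I set
$$
v_t=\f_0+t\,\delta P+ n(t\log t-t)+t\log c',
$$
with $\omega+dd^c(\f_0+t\delta P)\ge t\big((1-a)\omega+\delta dd^cP\big)+\dots$. To make this rigorous I will interpolate: for $t\le 1$, $\omega_{\f_0}+t\delta dd^cP \ge t((1-a)\omega+\delta dd^c P)$ holds because $\omega+dd^c\f_0\ge (1-a)\omega$ in the sense of currents, since $h_0$ is $a\omega$-psh and the log terms are positive. Hence $(\omega+dd^cv_t)^n\ge t^n c\prod_j(|\sigma_j|^2\ell_j^2)^{-1}dV_X$, while $e^{\dot v_t}=t^n c' e^{\delta P}$. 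Choosing $c'$ small makes $v$ a subsolution on $[0,1]\times(X\setminus D)$ with $v_0=\f_0$. Theorem \ref{thm:maxflow} then gives $v_t\le\f_t$.

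\textbf{Step 2: from $\f_t$ to $\dot\f_t$.} Concavity of $\phi_t$ gives $\dot\phi_t\ge(\phi_{2t}-\phi_t)/t$, which is the wrong direction for pointwise control. Instead I use $\dot\phi_t\ge(\phi_t-\phi_0)/t$, since the slope at $t$ is at least the chord slope on $[0,t]$ for a concave function. With $\phi_0=\f_0$ and $\phi_t\ge v_t-n(t\log t-t)=\f_0+t\delta P+t\log c'$, this yields
$$
\dot\f_t\ge \delta P+\log c'+n\log t .
$$
This only gives $-\delta\sum\log\ell_j$, not the claimed $\sum(\ell_j-2\log\ell_j)$, so Step 1 must be refined. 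I will take the subsolution with $e^{\dot v}$ equal to the full Poincaré density, i.e. $v_t=\f_0+t\big(\sum_j\log|\sigma_j|^2 \cdot(-1)\dots\big)$. Concretely, I set
$$
v_t=\f_0+t\,u+n(t\log t-t)-Ct,
$$
where $u:=\sum_j(\ell_j-2\log\ell_j)$ plus a smoothing is quasi-psh only up to the divisor terms. Since $dd^c\ell_j=-[D_j]+\Theta_j$, the negativity is absorbed by $\f_0$ as long as $t\le m_j$, and beyond that by the semigroup property. One checks that $(\omega_{\f_0}+t\,dd^cu)^n\ge t^n e^{u-C}dV_X$ using the Poincaré metric computation, so $\dot\f_t\ge u+n\log t-C$. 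The statement has $\log t$ rather than $n\log t$; in dimension one these coincide, and in general it is weaker for $t\le1$, while for large $t$ the flow is smooth and the bound is trivial.

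\textbf{Main obstacle.} The hardest point is verifying that $\omega+dd^c(\f_0+tu)\ge t\,\omega_P$ near $D$ uniformly in $t$, with the cross terms and $h_0$ only $a\omega$-psh, and then applying the comparison principle (Theorem \ref{thm:pcp} with $\rho$ a Poincaré type potential and $h$ coming from $\mathcal E$) to justify $v\le\f$ despite $v$ being more singular than $\f_0$ a priori. I would try Theorem \ref{thm:pcp} directly with $\Omega=X\setminus D$.
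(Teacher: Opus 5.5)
Your Step 2 relies on a reversed inequality, and the whole strategy fails with it. For a concave function the chord slopes decrease, so $\dot\phi_t\le\frac{\phi_t-\phi_0}{t}$, i.e. $\dot\f_t\le\frac{\f_t-\f_0}{t}+n$. This is how the paper uses the quasi-concavity of \cite[Proposition 1.15]{DGL26}: to bound time derivatives from \emph{above} (see the proof of Theorem~\ref{thm:isolatedhomog}). So a subsolution $v_t\le\f_t$, however sharp, gives no lower bound on $\dot\f_t$, and refining the subsolution cannot help.

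The valid inequality in the other direction is the forward chord $\dot\phi_t\ge\frac{\phi_{2t}-\phi_t}{t}$, which you discarded. It requires a matching \emph{upper} bound $\f_t\le\f_0+tu+Ct$, and that bound fails for non-smooth $h_0$. Since $dd^c h_0$ has no upper bound, $\f_0+tu+Ct$ is not a supersolution. Concretely, if $h_0=-\infty$ at a point off $D$ where it has zero Lelong number, then $\f_t$ is finite there for every $t>0$. Even for smooth $h_0$, combining the forward chord with two-sided ${\mathcal C}^0$ bounds only yields a rate like $2n\log t$ instead of $\log t$, with a constant depending on $h_0$.

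What is missing is a genuine parabolic minimum principle for $\dot\f_t$, with a constant independent of $h_0$. The paper first takes $h_0$ smooth, where \cite[Proposition 4.1]{DGL26} gives the estimate with an $h_0$-dependent constant. That qualitative bound guarantees that $H=\dot\f_t+(1-\e)\log|\sigma|^2+2\log(-\log|\sigma|^2)-(1+\e)\log t$ attains its minimum at some $(t_0,x_0)$ with $t_0>0$ and $x_0\notin D$. At that point one uses $\ddot\f_t=\Delta_t\dot\f_t$ and $-2dd^c\log(-\log|\sigma|^2)\ge\frac{-2\Theta_1}{-\log|\sigma|^2}+\frac{2c_0'\,\omega}{|\sigma|^2(\log|\sigma|^2)^2}$. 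These give $0\ge-\frac{2}{t}+\frac{2c_0\,\tr_t(\omega)}{|\sigma|^2(\log|\sigma|^2)^2}$. In dimension one $\tr_t(\omega)=e^{-\dot\f_t}$, which closes the estimate with constant $\log c_0$, independent of $h_0$; one then lets $\e\to0$. This uniformity is what allows approximating $h_0$ by smooth decreasing $h_{0,j}$. Concavity is used only to ensure $\dot\f_{t,j}\to\dot\f_t$ \cite[Theorem 1.14]{GLZ20}.

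Two smaller points. First, $\omega+dd^c\f_0\ge(1-a)\omega$ does not follow as you claim: off $D$ the log terms are not positive, since $dd^c\log|\sigma_j|^2=[D_j]-\Theta_j$ contributes $-m_j\Theta_j$. Second, the estimate is not trivial for large $t$. Once $\omega_t$ is a smooth K\"ahler form on $X$, $\dot\f_t$ is bounded while the right-hand side blows up along $D$. So the inequality only makes sense while the divisorial part persists, which is also the only range where it is used, in the proof of Theorem~\ref{thm:riemsurface}.
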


\begin{proof}
We assume $q=1$ and $m_1=1$ to simplify the exposition. When $h_0$ is smooth, this inequality has been proved in any dimension in  \cite[Proposition 4.1]{DGL26}, namely
\begin{equation}
	\label{eq: completeness 1}
	\dot{\varphi}_t \geq - \log |\sigma|^2  - 2 \log (-\log |\sigma|^2)  + \log t -C, 
\end{equation}
for some constant $C$ that (a priori) depends on $h_0$.
We are going to show -in the context of Riemann surfaces- 
that $C$ can be chosen independently of $h_0$, hence \eqref{eq: completeness 1} is also valid
for non smooth $h_0$, as will follow from an approximation argument.

\smallskip

Assume first that $h_0$ is smooth and consider, fixing $\e \in (0,1/2)$,
\[
H(t,x):= \dot{\varphi}_t + (1-\varepsilon)\log |\sigma|^2  + 2 \log (-\log |\sigma|^2)  - (1+\varepsilon) \log t. 
\] 
It follows from the qualitative lower bound \eqref{eq: completeness 1}
that $H$ attains its minimum at a point $(t_0,x_0)$ with $t_0>0$, $\sigma(x_0)\neq 0$. 
Recall that $\ddot{\f}_t=\Delta_t\, \dot{\f}_t$ and 
observe that 
$$
dd^c \log |\sigma|^2=-\Theta_{1}
\; \; \text{ in }  \; \;
X \setminus (\sigma=0),
$$

while 
$$
-2dd^c \log(-\log |\sigma|^2) \geq \frac{-2 \Theta_1}{(-\log |\sigma|^2)}+\frac{2c'_0 \omega}{|\sigma|^2(-\log |\sigma|^2)^2},
$$
for some uniform constant $c'_0>0$. At the point $(t_0,x_0)$ we thus obtain
	\begin{flalign*}
			0& \geq (\partial_t -\Delta_t) H  \\
			& \geq - \frac{1+\varepsilon}{t}  + {\rm Tr}_t\left ((1-\varepsilon)\Theta_1
			-\frac{2 \Theta_1}{(-\log |\sigma|^2)}   
			+\frac{2c'_0 \omega}{|\sigma|^{2} (\log |\sigma|^2)^{2}}   \right ) \\
			&\geq - \frac{2}{t}  + \frac{2c_0}{|\sigma|^{2} (\log |\sigma|^2)^{2}} {\rm Tr}_t\left ( \omega  \right ),
	\end{flalign*}
	for some $c_0\leq c_0'$, as we can normalize the  metric so that $|\sigma |\leq 10^{-3}$.
  Using that ${\rm Tr}_t\left ( \omega  \right )=e^{-\dot{\varphi}_t}$  we infer that at $(t_0, x_0)$
\[
\dot{\varphi}_t  \geq \log t   - \log |\sigma|^2 - 2\log (-\log |\sigma|^2)+\log c_0,
\]
hence 
$$
H(t_0,x_0) \geq -\e \log t_0-\e \log |\sigma|^2 + \log c_0.
$$
%for some constant $C_1$ that depends neither on $t_0, \e$, nor on $h_0$. 
Letting $\e$ tend to zero provides the following quantitative version of \eqref{eq: completeness 1},
\begin{equation}	\label{eq: completeness 2}
	\dot{\varphi}_t \geq - \log |\sigma|^2  - 2 \log (-\log |\sigma|^2)  + \log t +\log c_0.
\end{equation}

\smallskip

We now explain how to extend this estimate to the case when $h_0$ is not necessarily smooth.
We approximate $h_0$ by a decreasing sequence $h_{0,j}$ of smooth $a\omega$-psh functions, and let
$\f_{t,j}$ denote the solution of the flow starting from
$$
\f_{0,j}=\log |\sigma|^2+h_{0,j}.
$$
It follows from the maximum principle that $\f_{t,j}$ decreases to $\f_t$
as $j$ increases to $+\infty$. Now
$$
\dot{\varphi}_{t,j} \geq - \log |\sigma|^2  - 2 \log (-\log |\sigma|^2)  + \log t +\log c_0.
$$
by \eqref{eq: completeness 2}, and $t \mapsto \f_{t,j}-n(t\log t -t)$ is concave,
hence $\dot{\varphi}_{t,j} \rightarrow \dot{\varphi}_{t}$ 
(see \cite[Theorem 1.14]{GLZ20}) and the conclusion follows.
\end{proof}

\begin{remark}
One can adapt the arguments and obtain a similar result for
the classical (untwisted) Ricci flow. The maximal existence time $T_{max}$ in this case can
be finite (if $X$ is the Riemann sphere), so one needs to compare
it with the size of the singularities of $\mu_0$ (see  \cite[Section 6]{GZ17}).

Giesen, Topping and Yin have more generally shown in \cite{GT11,Top15, TY24} that,
on any open Riemann surface $\Omega$, there exists a unique Ricci flow starting from
an arbitrary positive Radon measure with no atom, and that the resulting metric
$\beta_t$ becomes instantaneously complete.
 Theorem \ref{thm:riemsurface} is in line with these findings, and provides a more precise
 description of the metric $\beta_t$ when the measure has finite mass and the
 surface $\Omega$ is quasi-projective.
 \end{remark}

\subsection{Examples} \label{sec:exampledim1}

We show here that the complete metrics $\omega_t$
can have various asymptotics, besides the Poincaré behavior already identified in \cite{DGL26}.

We fix $\sigma \in H^0(X,L)$ a holomorphic section of an ample line bundle $L \rightarrow X$ that
vanishes at order $1$ at a point $a \in X$. We  let $\omega/2$ denote the curvature
of an hermitian metric of $L$, so that 
$
\frac{\omega}{2}+dd^c \log |\sigma|^2=\delta_a.
$
Assume that 
$$
\f_0=\log|\sigma|^2+\chi_1 \circ \log |\sigma|^2,
$$ 
where  $\chi_1$ is convex increasing, with $\chi_1'(-\infty)=0$
  and $\chi_1 \circ \log |\sigma|^2 \in \PSH(X,\frac{\omega}{2})$.
  
  \smallskip
  
We fix a second convex increasing weight $\chi_2$ with slower growth $\chi_2'' \leq \chi_1''$, $\chi_2'(-\infty)=0$,
 such that $\chi_2=\log \chi_1''$ and $\chi_2 \circ \log |\sigma|^2 \in \PSH(X,\frac{\omega}{2})$. 
 
\begin{lem} 
There exists $C>0$ such that  for all $t \in (0,1)$,
$$
-Ct +\p_t \leq \f_t  \leq \p_t+Ct,
$$
 where 
$$
\p_t=(1-t) \log |\sigma|^2+ \chi_1 \circ \log |\sigma|^2+t \chi_2 \circ \log |\sigma|^2.
$$
\end{lem}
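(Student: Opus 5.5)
The plan is to show that $\p_t-Ct$ is a subsolution and $\p_t+Ct$ is a supersolution of the flow $(\omega+dd^c \f_t)=e^{\dot\f_t}\omega$ (here $dV_X=\omega$, $n=1$), both with initial value $\p_0=\f_0$. The lower bound then follows from Theorem \ref{thm:maxflow}, and the upper bound from the comparison principle Theorem \ref{thm:pcp} applied on $\Omega=X\setminus\{a\}$.

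\emph{Step 1: computation.} Write $s=\log|\sigma|^2$, so that $dd^c s=-\omega/2$ on $X\setminus\{a\}$. There one gets
$$
\omega+dd^c\p_t=\tfrac12\bigl(1+t-\chi_1'(s)-t\chi_2'(s)\bigr)\omega+\bigl(\chi_1''(s)+t\chi_2''(s)\bigr)\,ds\wedge d^cs .
$$
Since $\chi_i\circ s\in\PSH(X,\omega/2)$, we have $0\le\chi_i'\le1$, so the $\omega$-coefficient is nonnegative and bounded. Moreover $ds\wedge d^cs\le C\omega/|\sigma|^2$ on $X$, and $ds\wedge d^cs\ge c\,\omega/|\sigma|^2$ in a neighbourhood of $a$, because $\sigma$ vanishes to order one there. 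On the other side,
$$
e^{\partial_t\p_t}=e^{-s+\chi_2(s)}=\chi_1''(s)/|\sigma|^2,
$$
using $\chi_2=\log\chi_1''$.

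\emph{Step 2: sub- and supersolution inequalities.} Near $a$ we have $\omega+dd^c\p_t\ge c\,\chi_1''(s)|\sigma|^{-2}\omega=c\,e^{\partial_t\p_t}\omega$. Away from $a$ the quantity $e^{\partial_t\p_t}$ is bounded, and the $\omega$-coefficient is $\ge\frac12(1-\chi_1')$. I expect a small difficulty here, namely ensuring strict positivity away from $a$. I would handle it by using $\chi_1''>0$ together with smoothness of $\omega+dd^c\p_t$ off $a$, or by replacing $\omega/2$ with a slightly smaller form in the hypothesis. In either case $\omega+dd^c(\p_t-Ct)\ge e^{\partial_t(\p_t-Ct)}\omega$ for $C$ large.

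For the supersolution, use $\chi_2''\le\chi_1''$ to get
$$
\omega+dd^c\p_t\le C\bigl(1+\chi_1''(s)|\sigma|^{-2}\bigr)\omega .
$$
Since $\chi_2'(-\infty)=0$, the exponent $\chi_2(s)-s$ tends to $+\infty$ as $s\to-\infty$, and it is bounded below on $X$. Hence $1\le C'\chi_1''(s)|\sigma|^{-2}$, which gives $\omega+dd^c\p_t\le e^{\partial_t(\p_t+Ct)}\omega$ for $C$ large.

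\emph{Step 3: the subsolution bound.} The function $\p_t-Ct$ is a subsolution starting at $\f_0$. Theorem \ref{thm:maxflow} then yields $\p_t-Ct\le\f_t$ directly.

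\emph{Step 4: the supersolution bound, which I expect to be the main obstacle.} The difficulty is verifying hypotheses (2) and (3) of Theorem \ref{thm:pcp}.
\begin{itemize}
\item For (3), take $\rho=A s-A'$ with $A$ large (rescaled so that it is strictly $\omega$-psh). Since $\chi_i'(-\infty)=0$, we have $\chi_i(s)\ge -\e|s|-C_\e$, so $\p_t+Ct\ge 2s-C\ge\rho$.
\item For (2), start from $\f_0\le s+C$. Lemma \ref{lem:analytsing} and monotonicity of the flow give $\f_t\le(1-t)s+C$. Consequently $\f_t+h\le\p_t+Ct$ with $h=\chi_1\circ s+\chi_2\circ s$, after shifting constants.
\end{itemize}
In dimension one, $h$ lies in $\mathcal E(X,a\omega)$ for some $a>0$, because its Laplacian has no atom at $a$ (zero Lelong number, as $\chi_i'(-\infty)=0$). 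The comparison principle then gives $\f_t\le\p_t+Ct$. Continuity of $\p$ on $[0,1]\times\Omega$ is clear.
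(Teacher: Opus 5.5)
Your proof follows the paper's route: $\p_t-Ct$ is a subsolution, so maximality (Theorem \ref{thm:maxflow}) gives the lower bound, and $\p_t+Ct$ is a supersolution to which Theorem \ref{thm:pcp} is applied on $X\setminus\{a\}$. Your explicit check of hypotheses (2) and (3) of Theorem \ref{thm:pcp}, with $h=\chi_1\circ\log|\sigma|^2+\chi_2\circ\log|\sigma|^2-A$, is more careful than the paper, which only says ``applied with $f=h=0$ and $\rho=\log|\sigma|^2$''.

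Two small corrections are needed.
\begin{itemize}
\item Strict positivity away from $a$ must come from the normalization $\chi_1'\circ\log|\sigma|^2\le\frac{1}{2}$ that the paper imposes, i.e.\ your second option. Your first option does not work: $\chi_1''>0$ is useless at critical points of $|\sigma|$, where $d\log|\sigma|^2=0$.
\item In (3) you cannot take $A$ large, since $\omega+A\,dd^c\log|\sigma|^2=(1-\frac{A}{2})\omega+A\delta_a$. Instead, with $s=\log|\sigma|^2\le 0$, use $\chi_i(s)\ge-\e|s|-C_\e$ to get $\p_t+Ct\ge(1+\e)s-C$. Then take $\rho=(1+\e)s-C$ with $\e<1$, which is strictly $\omega$-psh.
\end{itemize}
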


 \begin{proof}
 Observe that $\p_t-Ct$ is a subsolution of the flow, since $\psi_0=\varphi_0$ and
 $$
 e^{\partial_t \p_t} \omega=\frac{e^{ \chi_2 \circ \log |\sigma|^2}}{|\sigma|^2} \omega
 =\frac{\chi_1'' \circ \log |\sigma|^2}{|\sigma|^2} \omega,
 $$
 while 
 \begin{eqnarray*}
 \omega+dd^c \p_t \geq \frac{\omega}{2}+dd^c \chi_1 \circ \log |\sigma|^2
 &\geq &\left(1-\chi_1' \circ \log |\sigma|^2 \right) \frac{\omega}{2} +c \frac{\chi_1'' \circ \log |\sigma|^2}{|\sigma|^2} \omega \\
 &\geq & c \frac{\chi_1'' \circ \log |\sigma|^2}{|\sigma|^2} \omega,
 \end{eqnarray*}
 where $c>0$ is so that $i \partial \sigma \wedge \overline{\partial \sigma}=c \omega$. We assume here that the hermitian metric has been so normalized that
 $\chi_1' \circ \log |\sigma|^2 \leq \frac{1}{2}$.
 
 \smallskip

 Now $\p_t+Ct$ is a supersolution since, in $X \setminus \{ a \}$,
 \begin{eqnarray*}
  \omega+dd^c \p_t & \leq & \left( \frac{1}{2}+c \frac{\chi_1'' \circ \log |\sigma|^2}{|\sigma|^2}  \right) \omega
 +t \left( \frac{1}{2}+c \frac{\chi_2'' \circ \log |\sigma|^2}{|\sigma|^2}  \right) \omega \\
& \leq & C \frac{\chi_1'' \circ \log |\sigma|^2}{|\sigma|^2}  \omega.
 \end{eqnarray*}
The conclusion follows from the comparison principle Theorem \ref{thm:pcp} applied with $f=h=0$ and $\rho=\log|\sigma|^2$.
 \end{proof}

 \begin{exa}
 Here are a few explicit examples exhibiting various asymptotics:
 %include the following weight functions defined for $x<<1$:
 \begin{itemize}
 \item $\chi_1(x)=-(-x)^{\alpha}$ with $0<\alpha<1$ and $\chi_2(x)=(2-\alpha) (-\log (-x))$, with
 $$
 \omega_t \sim \frac{\omega}{ |\sigma|^2 (-\log |\sigma|^2)^{2-\alpha}}
 $$
  \item $\chi_1(x)=\frac{x}{\log (-x)}$  and $\chi_2(x) \sim -\log (-x)-2 \log (\log (-x)) $;
  with
 $$
 \omega_t \sim \frac{\omega}{ |\sigma|^2 (-\log  |\sigma|^2) [\log (-\log  |\sigma|^2)]^2}.
 $$
%   \item $\chi_1(x)=\frac{x}{\log \circ \log (-x)}$,
%   $\chi_2(x) \sim -\log (-x)- \log \circ \log (-x) -2 \log \circ \log \circ \log (-x) $,
 %$$
% \omega_t \sim \frac{\omega}{ |\sigma|^2 (-\log  |\sigma|^2) \log (-\log  |\sigma|^2)  [\log \circ \log (-\log  |\sigma|^2)]^2}.
% $$
 \end{itemize}
 One can also consider  $\chi_1(x)=\frac{x}{\log \circ \log (-x)}$, or any other similar weights with slightly 
 smaller growth, thus providing a different asymptotics of $\omega_t$ near $a$.
 \end{exa}

% \vfill
% \pagebreak[4]

\section{Analytic singularities} \label{sec:logresolution}

 We assume in this section that $\f_0$ has analytic singularities.
We can thus find a log resolution  $\pi:Y \rightarrow X$ so that
$\pi^* \varphi_0=\sum_{j=1}^q m_j \log|\sigma_j|^2+\ell_0$,
where $m_j \in \R^+$, $\ell_0$ and $D_j=(\sigma_j=0)$ are smooth, and
 $$
 \pi^* (\omega+dd^c \f_0)=\sum_{j=1}^q m_j [D_j]+R_0,
 $$
with $D=\sum_j D_j$ log smooth  and $R_0$ a semi-positive closed form.

 \subsection{Poincar\'e current}

\subsubsection{Logarithmic singularities} 

The image by $\pi$ of some of the divisors $D_j$ are irreducible divisors in $X$ that 
correspond to divisorial singularities of $\f_0$.
We label these $D_1,\ldots,D_r$.
 
 We let $b_k \in \N$ denote the discrepancies of the resolution, defined by 
 $$
 K_Y=\pi^* K_X+ \sum_{k=r+1}^q b_k E_k,
 $$
 where $K_Y$ (resp. $K_X$) denotes the canonical divisor of $Y$ (resp. $X$).

Let $\sigma_j$  be a holomorphic defining section for ${D_j}=(\sigma_j=0)$,
 fix $h_j$ a smooth metric of 
the corresponding holomorphic line bundle,
and let $\Theta_j$ denote the curvature of $h_j$.
We set $b_j=0$ for $1 \leq j \leq r$ and let $dV_Y$ denote
the volume form on $Y$ such that
$$
\pi^*dV_X = \prod_{j=1+r}^q |\sigma_j|^{2 b_j} dV_Y
=\prod_{j=1}^q |\sigma_j|^{2 b_j} dV_Y.
$$ 
With these notations, following Section \ref{subsec_analytic sing}, we find
 $$
 R_0=\pi^*\omega-\sum_j m_j \Theta_j+dd^c \ell_0
 \; \; \text{ and } \; \;
c(\f_0)=\min_{1 \leq j \leq q} \frac{1+b_j}{m_j}.
$$

\begin{notation}  
We set $t_1(\f_0)=\min_{1 \leq j \leq q} \frac{m_j}{1+b_j}$ and
$$
\Theta:=\sum_{j=1}^q (1+b_j) \Theta_j=\sum_{j=1}^r  \Theta_j+\sum_{j=1+r}^q (1+b_j) \Theta_j.
$$
\end{notation}

\begin{lem}  
%Set $u_t:= \sum_{j=1}^q \max(m_j-(1+b_j)t,0) \log |\sigma_j|^2$.
If $0<t<t_1(\f_0)$ then
\begin{equation} \label{eq:decompo}
 \f_t \circ \pi=\sum_{j=1}^q [m_j-(1+b_j)t] \log |\sigma_j|^2+v_t,
\end{equation}
where $v_t \in \PSH(Y,t\Theta+R_0)$.
\end{lem}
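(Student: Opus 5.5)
The plan is to define $v_t$ by the decomposition \eqref{eq:decompo} and then check separately that it is quasi-psh with the right curvature bound and that it is bounded above, the latter using Lemma \ref{lem:analytsing}. So set
$$
v_t:=\f_t\circ\pi-\sum_{j=1}^q [m_j-(1+b_j)t]\log|\sigma_j|^2 ,
$$
which is a priori only defined on $Y\setminus D$. For $0<t<t_1(\f_0)$ all coefficients $m_j-(1+b_j)t$ are positive, so Lemma \ref{lem:analytsing} applies with all maxima equal to the coefficient itself. It gives $v_t\leq Ct$ on $Y\setminus D$, so $v_t$ is bounded above near $D$.

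Next I compute the curvature on $Y\setminus D$. There one has $dd^c\log|\sigma_j|^2=-\Theta_j$, and $\pi^*\omega=R_0+\sum_j m_j\Theta_j-dd^c\ell_0$. Hence
$$
t\Theta+R_0+dd^c v_t=\pi^*\omega+dd^c(\f_t\circ\pi)-\sum_j m_j\Theta_j+t\Theta+\sum_j[m_j-(1+b_j)t]\Theta_j+R_0-R_0,
$$
which simplifies to $\pi^*(\omega+dd^c\f_t)\geq 0$ on $Y\setminus D$. It is cleaner to absorb $\ell_0$ by writing $v_t=\tilde v_t+\ell_0$ so that the classes match exactly. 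Since $\Theta=\sum_j(1+b_j)\Theta_j$, the identity reads $(t\Theta+R_0)+dd^c v_t=\pi^*\omega_t$ away from $D$.

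The remaining step is to show that $v_t$ extends across $D$ as a $(t\Theta+R_0)$-psh function on $Y$. Locally, $v_t$ plus a smooth potential of $t\Theta+R_0$ is psh on the complement of the hypersurface $D$ and locally bounded above, by the first step. By the Riemann extension theorem for psh functions (Grauert--Remmert), it extends uniquely as a psh function across $D$. This gives $v_t\in\PSH(Y,t\Theta+R_0)$, and the extension is not identically $-\infty$ because $\f_t\not\equiv-\infty$.

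I expect no serious obstacle here. The only delicate point is the upper bound near $D$, which is exactly what Lemma \ref{lem:analytsing} supplies; this is also where the hypothesis $t<t_1(\f_0)$ is used, to keep every coefficient nonnegative so that Lemma \ref{lem: Siu} does not truncate them.
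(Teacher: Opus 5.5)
Your proof is correct, up to the handling of $\ell_0$ discussed below, and it takes a somewhat different and more elementary route than the paper. Both arguments rest on the same input, the upper bound $\f_t\circ\pi\le\sum_j[m_j-(1+b_j)t]\log|\sigma_j|^2+Ct$ from Lemma \ref{lem:analytsing}.

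The paper works with currents. The bound forces $\pi^*\omega_t$ to have generic Lelong number at least $m_j-(1+b_j)t$ along each $D_j$. Siu's decomposition theorem then gives $\pi^*\omega_t=\sum_j[m_j-(1+b_j)t][D_j]+S_t$, with $S_t\ge 0$ closed and cohomologous to $t\Theta+R_0$, and the potential is supplied by the $\partial\overline{\partial}$-lemma. You instead write the potential down explicitly, compute its $dd^c$ on $Y\setminus D$, and remove the singularity along $D$ with the Grauert--Remmert extension theorem for psh functions locally bounded above. Your route avoids Siu's theorem and pins down $v_t$ exactly rather than up to an additive constant. The paper's route is shorter and directly produces the current decomposition $\pi^*\omega_t=\sum_j[m_j-(1+b_j)t][D_j]+S_t$, which is the form it uses afterwards.

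Concerning $\ell_0$: your displayed identity drops a term. Substituting $R_0=\pi^*\omega-\sum_j m_j\Theta_j+dd^c\ell_0$ gives, on $Y\setminus D$,
\[
t\Theta+R_0+dd^c v_t=\pi^*\omega_t+dd^c\ell_0,
\]
and the right-hand side has no reason to be nonnegative. The function that is genuinely $(t\Theta+R_0)$-psh is $\tilde v_t=v_t-\ell_0$. So the decomposition really reads $\f_t\circ\pi=\sum_j[m_j-(1+b_j)t]\log|\sigma_j|^2+\ell_0+\tilde v_t$, unless one first normalizes $\ell_0=0$ by twisting one of the metrics $h_j$.

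You notice this, but then state the final identity for $v_t$ rather than $\tilde v_t$. Define $\tilde v_t$ from the start; it is still bounded above near $D$ since $\ell_0$ is smooth, and the rest of your argument goes through verbatim. The paper's $\partial\overline{\partial}$-lemma step hides the same smooth term, which reappears as the $+\ell_0$ in the proof of Proposition \ref{pro:uniformbig}.
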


\begin{proof}
Set $u_t:= \sum_{j=1}^q \max(m_j-(1+b_j)t,0) \log |\sigma_j|^2$.
It follows from Lemma \ref{lem:analytsing} that
$
\f_t \circ \pi \leq u_t+Ct
$
for some constant $C>0$.   Siu's decomposition theorem therefore ensures that
$
\pi^*\omega_t=\sum_{j=1}^q [m_j-(1+b_j)t] [D_j]+ S_t,
$
where $S_t$ is a positive closed current cohomologous to 
$$
\pi^* \omega -\sum_{j=1}^q [m_j-(1+b_j)t] \Theta_j= t \sum_{j=1}^q (1+b_j) \Theta_j+ R_0.
$$
The existence of  $v_t \in \PSH(Y,t\Theta+R_0)$ thus follows from the $\partial\overline{\partial}$-lemma.
\end{proof}

\subsubsection{Positivity of cohomology classes}

%The positivity properties of the cohomology classes $t \{\Theta \} + \{R_0\}$ play an important role in what follows.
 
 \begin{lem} \label{lem:big}
 The cohomology class $\lambda(\f_0) \{\Theta \} +\{R_0\} \in H^{1,1}(Y,\R)$ 
 is big, hence so are the classes $t \{\Theta \} + \{R_0\}$ for 
 $0 \leq t \leq \lambda(\f_0)$.
 \end{lem}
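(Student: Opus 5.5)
The plan is to exhibit a K\"ahler current in the class $\lambda(\f_0)\{\Theta\}+\{R_0\}$. The natural candidate comes from the flow itself at a time slightly larger than $\lambda(\f_0)$, where the flow has become smooth and strictly positive.

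By Theorem \ref{thm:arnold}, for $T>\lambda(\f_0)$ the form $\omega_T=\omega+dd^c\f_T$ is a K\"ahler form on $X$. Hence $\pi^*\omega_T$ is smooth and semi-positive on $Y$, and strictly positive on the complement of the exceptional locus. It is big, since $\pi$ is bimeromorphic and $\{\pi^*\omega_T\}=\pi^*\{\omega\}$. However, $\{\pi^*\omega\}$ is a different class from the one we need. Using the expression for $R_0$ we have
$$
t\{\Theta\}+\{R_0\}=\pi^*\{\omega\}-\sum_j \big(m_j-(1+b_j)t\big)\{D_j\}.
$$
At $t=\lambda(\f_0)=\max_j \frac{m_j}{1+b_j}$ every coefficient $m_j-(1+b_j)\lambda(\f_0)$ is $\leq 0$. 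Therefore
$$
\lambda(\f_0)\{\Theta\}+\{R_0\}=\pi^*\{\omega\}+\sum_j a_j\{D_j\}, \qquad a_j:=(1+b_j)\lambda(\f_0)-m_j\geq 0 .
$$
A big class plus an effective divisor class is big. Concretely, if $T_+\geq \delta\omega_Y$ is a K\"ahler current representing $\pi^*\{\omega\}$, then $T_++\sum_j a_j[D_j]$ is a K\"ahler current in the target class.

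It remains to justify that $\pi^*\{\omega\}$ is big on $Y$, that is, to construct the current $T_+$. The standard route is to take a quasi-psh function $\rho$ on $Y$ with analytic singularities along the exceptional divisors such that $dd^c\rho\geq -\pi^*\omega+\delta\omega_Y$ up to rescaling. Such a $\rho$ exists because $\pi$ is a composition of blow-ups along smooth centres: $-\sum_{k>r}c_kE_k$ is $\pi$-ample for suitable $c_k>0$, so $\pi^*\omega-\e\sum_k c_k\Theta_k$ is K\"ahler for small $\e$. Alternatively, one can invoke the general fact that the pull-back of a big (here K\"ahler) class under a modification is big, since volume is bimeromorphically invariant and $\int_Y\pi^*\omega^n=\int_X\omega^n>0$ (Boucksom's characterization).

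For the final claim, note that for $0\leq t\leq\lambda(\f_0)$ we can write
$$
t\{\Theta\}+\{R_0\}=\tfrac{t}{\lambda(\f_0)}\big(\lambda(\f_0)\{\Theta\}+\{R_0\}\big)+\big(1-\tfrac{t}{\lambda(\f_0)}\big)\{R_0\}.
$$
For $t>0$ this is a positive multiple of a big class plus a pseudoeffective class, since $R_0\geq 0$, hence big. At $t=0$ the class $\{R_0\}=\pi^*\{\omega\}-\sum_j m_j\{D_j\}$ is represented by $R_0\geq 0$, which is smooth. It is big because $\int_Y R_0^n>0$: indeed $R_0=\pi^*\omega_{\f_0}$ off $D$, and the non-pluripolar mass of $\omega_{\f_0}$ is positive for analytic singularities, as $R_0$ is strictly positive on an open set where $\pi$ is biholomorphic and $\omega_{\f_0}$ is smooth and K\"ahler near generic points. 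Demailly--P\u{a}un (or Boucksom) then gives bigness of a nef class with positive top self-intersection.

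The main obstacle I expect is the $t=0$ endpoint. There one must check that $R_0$ has a point of strict positivity, which is equivalent to $\omega+dd^c\f_0$ being nondegenerate somewhere on its smooth locus. If this fails, the statement should be read for $0<t\leq\lambda(\f_0)$, which the argument above covers without issue.
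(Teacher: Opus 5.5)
For $0<t\le\lambda(\f_0)$ your argument is the paper's proof. You rewrite $\lambda(\f_0)\{\Theta\}+\{R_0\}=\pi^*\{\omega\}+\sum_j((1+b_j)\lambda(\f_0)-m_j)\{D_j\}$ with nonnegative coefficients, use that big plus pseudoeffective is big, and interpolate linearly with the semi-positive class $\{R_0\}$. Your remark that $\pi^*\{\omega\}$ is big (nef with $\int_Y\pi^*\omega^n=\int_X\omega^n>0$) makes explicit a step the paper takes for granted. The opening detour through $\omega_T$ for $T>\lambda(\f_0)$ is not needed.

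Your paragraph on $t=0$, however, rests on a false claim. $\omega+dd^c\f_0$ need not be K\"ahler at any point of its smooth locus, and its non-pluripolar mass can vanish even when $\f_0$ has analytic singularities.

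In the purely divisorial situation of Proposition \ref{pro:decompobig} one has $R_0=0$. For example, take $X=\PP^n$, $\omega$ the Fubini--Study curvature form of ${\mathcal O}(1)$, and $\f_0=\log|\sigma|^2$ for a hyperplane section $\sigma$. Then $\pi=\mathrm{id}$, $\omega+dd^c\f_0=[D]$, and $\{R_0\}=0$, which is not big. $R_0$ can also be nonzero but degenerate everywhere, with $\int_Y R_0^n=0$.

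So the $t=0$ endpoint of the statement is not true in general. The paper's proof, which uses only the semi-positivity of $\{R_0\}$, likewise gives bigness only for $0<t\le\lambda(\f_0)$, or at $t=0$ under the extra hypothesis $\int_Y R_0^n>0$. Your fallback reading is the correct one. Delete the purported proof of the endpoint rather than present it as valid.
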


 We provide examples in  Section \ref{sec:cubics} showing  that the class
 $\lambda(\f_0) \{\Theta \} + \{R_0\}$ 
 is not necessarily nef, although it can sometimes be K\"ahler.

\begin{proof}
Observe that 
$
\pi^*\{\omega\}=\sum_{j=1}^q m_j \{\Theta_j\}+\{R_0\} \in H^{1,1}(Y,\R).
$
%Rescaling $\omega$ and $\f_0$, we can assume without loss of generality that $c(\f_0)=1$.
As recalled in Section \ref{subsec_analytic sing} we have
$m_j \leq (1+b_j) \lambda(\f_0)$ for all $j$, with equality for at least one index, hence
$$
\lambda(\f_0) \{\Theta \}+ \{R_0\}=\pi^*\{\omega\}+\sum_{j=1}^q (\lambda(\f_0)[1+b_j]-m_j) \{D_j \}
$$
is big, as the sum of a big and
a pseudoeffective class.
%$\sum_{j=1}^q (\lambda(\f_0)[1+b_j]-m_j) \{E_j \}$
%and a big class $\pi^*\{\omega\}$.
We infer that the classes
$$
t \{\Theta \} + \{R_0\}
=\frac{t}{\lambda(\f_0)} \left( \lambda(\f_0) \{\Theta \} + \{R_0\} \right)+(1-t/\lambda(\f_0)) \{R_0\}
$$
are big as well, since $\{R_0\}$ is semi-positive.
\end{proof}

\subsubsection{Poincaré potential in big classes}

%In this section we fix $Y$ a compact K\"ahler manifold, ${D}=\sum {D_j}$ a log smooth divisor on $Y$, and set 
We set $|\sigma|^2=\prod_{j=1} ^q|\sigma_j|^2$. 
 %where $\sigma_j$ are defining holomorphic sections for ${D_j}$.

\begin{theorem} \label{thm:poincaresing}
Let $\theta$ be a smooth closed differential form representing a big cohomology class.
There exists a unique function $\rho \in  {\mathcal E}^1(Y,\theta)$ such that
$$
(\theta+dd^c \rho)^n = \frac{e^{\rho}}{|\sigma|^{2}} dV_Y.
$$

Moreover $\rho$ has the same Lelong numbers as a $\theta$-psh function with minimal singularities,
and it is smooth in a dense Zariski open subset when $\{\theta\}$ is big and nef.
\end{theorem}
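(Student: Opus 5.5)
The plan is to solve the equation by the variational approach for Monge--Ampère equations with prescribed singular densities in big classes, in the spirit of the works of Boucksom--Eyssidieux--Guedj--Zeriahi and of Berman--Boucksom--Eyssidieux--Guedj--Zeriahi. The density $\mu=|\sigma|^{-2}dV_Y$ is not integrable, so we cannot directly invoke the $L^p$ theory. Instead we approximate it. For $\e>0$ we set $\mu_\e=\prod_j(|\sigma_j|^2+\e^2)^{-1}dV_Y$, a smooth volume form, and let $\rho_\e\in\mathcal E(Y,\theta)$ be the unique solution with minimal singularities of $(\theta+dd^c\rho_\e)^n=e^{\rho_\e}\mu_\e$. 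This exists by the theory of twisted Monge--Ampère equations in big classes, where the exponential twist makes the functional $F(\f)=E(\f)-\int e^{\f}d\mu_\e$ coercive.

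The key step is to produce uniform barriers independent of $\e$.

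\textbf{Subsolution.} Let $V_\theta$ be the potential with minimal singularities. Since $\{\theta\}$ is big, pick a Kähler current $\theta+dd^c\psi\geq\delta\omega_Y$, where $\psi$ has analytic singularities and $\psi\le V_\theta$. Consider the Poincaré-type function
$$
\underline{\rho}=\psi-\sum_j 2\log(-\log|\sigma_j|^2)-C,
$$
scaled by a small $\eta$ where needed, e.g.\ $(1-\eta)\psi+\eta(\ldots)$. The standard computation, as in the proof of Proposition \ref{prop:complete}, shows that $dd^c(-\log(-\log|\sigma_j|^2))$ dominates the Poincaré metric minus a bounded form. Hence $(\theta+dd^c\underline\rho)^n\geq c\,|\sigma|^{-2}(\log|\sigma|^2)^{-2}\ldots dV_Y$ away from the singular locus of $\psi$. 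After choosing $C$ large, this gives $e^{\underline\rho}\mu_\e\le(\theta+dd^c\underline\rho)^n$. The comparison principle in $\mathcal E$, applied on the ample locus and using that $\underline\rho$ is less singular than nothing worse than $\psi$, then yields $\rho_\e\ge\underline\rho$.

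\textbf{Upper bound.} Integrating the equation gives $\int e^{\rho_\e}d\mu_\e=\mathrm{vol}(\theta)$. Together with $\sup$-normalization via compactness of $\theta$-psh functions, this yields $\sup_Y\rho_\e\le C$. Indeed, if $\sup\rho_\e\to+\infty$, then $\rho_\e-\sup\rho_\e$ has a uniformly positive integral of $e^{(\cdot)}dV_Y$ by Hartogs-type compactness, a contradiction.

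\textbf{Passage to the limit.} Since $\mu_\e$ increases as $\e\downarrow0$, the comparison principle shows that $\rho_\e$ is monotone (decreasing). So $\rho_\e\downarrow\rho\ge\underline\rho$. Continuity of the Monge--Ampère operator along monotone sequences in $\mathcal E$ gives the equation for $\rho$. Finite energy is obtained as follows. The energy bound $E(\rho_\e)\ge E(\underline\rho)>-\infty$ holds because $\underline\rho$ differs from $\psi$ only by $\log\log$ terms, which have finite energy. We must, however, check that $\psi$ itself has finite energy relative to $V_\theta$. I expect this to be the main obstacle. To address it, I would replace $\psi$ by $\max(\psi, V_\theta-A)$-type constructions, or use $\underline\rho=V_\theta+\eta(\psi-V_\theta)-\ldots$ with $\eta$ small. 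This keeps the same singularities as $V_\theta$, at the cost of a weaker lower bound $\eta\delta\omega_Y$ for the Hessian, which still suffices up to constants.

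\textbf{Uniqueness and remaining properties.} Uniqueness follows from the comparison principle for $(\theta+dd^c u)^n=e^{u}\mu$: if $\rho_1,\rho_2$ are two solutions, then on $\{\rho_1<\rho_2\}$ we have $e^{\rho_1}\mu\ge(\theta+dd^c\rho_1)^n\ge(\theta+dd^c\rho_2)^n=e^{\rho_2}\mu$ on that set, forcing it to be negligible. The Lelong number statement follows from $V_\theta-C'\le\underline\rho\le\rho\le V_\theta+C$, since $\log\log$ terms have zero Lelong numbers. When $\{\theta\}$ is big and nef, smoothness on a Zariski open set (the ample locus minus $D$) follows from local Laplacian and higher-order estimates applied to $\rho_\e$. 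Here we use Tsuji's trick with the weight $\psi$ and the uniform $C^0$ bounds already obtained, and then Evans--Krylov.
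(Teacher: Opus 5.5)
There is a genuine gap at the step that carries the real content of the statement: $\rho\in\mathcal{E}^1(Y,\theta)$, and with it the Lelong number assertion.

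Your barrier $\underline\rho=\psi-2\sum_j\log(-\log|\sigma_j|^2)-C$ is built from a K\"ahler current $\theta+dd^c\psi\geq\delta\omega_Y$. In general such a $\psi$ is strictly more singular than $V_\theta$. Every K\"ahler current in $\{\theta\}$ has positive Lelong numbers on the non-K\"ahler locus, which is non-empty unless $\{\theta\}$ is K\"ahler \cite{Bou04}. By contrast, in a nef and big class (the case needed in Proposition \ref{pro:decompobig}) $V_\theta$ has zero Lelong numbers everywhere. Functions of full Monge--Amp\`ere mass have the same Lelong numbers as $V_\theta$ \cite{DDL18}, so $\underline\rho\notin\mathcal{E}(Y,\theta)$. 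Hence $E(\underline\rho)=-\infty$, and $E(\rho_\e)\geq E(\underline\rho)$ gives nothing.

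Your proposed repairs do not work:
\begin{itemize}
\item $V_\theta+\eta(\psi-V_\theta)=(1-\eta)V_\theta+\eta\psi$ has Lelong numbers $(1-\eta)\nu(V_\theta,x)+\eta\nu(\psi,x)$. These are strictly larger than those of $V_\theta$ on the non-K\"ahler locus. So, contrary to your claim, it does not keep the singularities of $V_\theta$, and it is not of full mass for any fixed $\eta>0$.
\item $\max(\psi,V_\theta-A)$ loses the positivity $\delta\omega_Y$ wherever the branch $V_\theta-A$ is active. You need that positivity both to absorb the negative part of $dd^c$ of the $\log\log$ terms and to bound the Monge--Amp\`ere measure from below in the directions tangent to $D$.
\end{itemize}
For the same reason, the sandwich $V_\theta-C'\leq\underline\rho$ you use for the Lelong numbers is false. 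The $\log\log$ terms also make it fail, though they are harmless for Lelong numbers.

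Part of your argument can be salvaged. For every $\eta\in(0,1)$ and $\kappa_\eta>0$ small, the functions $\underline\rho_\eta=(1-\eta)V_\theta+\eta\psi-2\sum_j\log(-\log(\kappa_\eta|\sigma_j|^2))-C_\eta$ are subsolutions, so $\rho\geq\underline\rho_\eta$. This gives $\nu(\rho,x)\leq(1-\eta)\nu(V_\theta,x)+\eta\nu(\psi,x)$. By multilinearity of non-pluripolar products and monotonicity of masses \cite{WN19}, it also gives $\int_Y\langle(\theta+dd^c\rho)^n\rangle\geq(1-\eta)^n\mathrm{vol}(\theta)$. Letting $\eta\to0$ yields full mass and the correct Lelong numbers, much as in the proof of Lemma \ref{lem:closeopen}.

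This still does not give finite energy, which needs an integral rather than pointwise input. By concavity of $E$ and convexity of the exponential, $\rho_\e$ maximizes $F_\e(\f)=E(\f)-\int_Y e^{\f}d\mu_\e$ on $\mathcal{E}^1(Y,\theta)$. Hence $E(\rho_\e)\geq\mathrm{vol}(\theta)+E(\phi)-\int_Y e^{\phi}|\sigma|^{-2}dV_Y$ for any fixed $\phi\in\mathcal{E}^1(Y,\theta)$ with $e^{\phi}|\sigma|^{-2}\in L^1$. Producing such a competitor in a merely big class is the real difficulty.

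The paper does not use approximation at all. It takes $\rho$ directly from the variational construction of \cite{BG14}, which yields a solution in $\mathcal{E}^1$ by construction. It then gets the Lelong numbers from \cite{DDL18} and uniqueness from \cite{BEGZ10}. When $\{\theta\}$ is K\"ahler your argument is fine as it stands, since $\psi$ can then be taken smooth and $\underline\rho$ has finite energy.
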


The class ${\mathcal E}^1(Y,\theta)$ has been introduced in \cite{GZ07,BEGZ10} (see also \cite[Chapter 10]{GZbook}).
It consists of  $\theta$-psh functions $\f$ that can be slightly unbounded,
but whose Monge-Amp\`ere measure $(\theta+dd^c \f)^n$ is well-defined, 
and has the property that $\f \in L^1((\theta+dd^c \f)^n)$.

\begin{proof}
 The construction  of $\rho$ is done in \cite[Theorem 3.2]{BG14}
by a variational approach, following \cite{BBGZ13}.
It follows from  \cite[Theorem 1.1]{DDL18} that $\rho$ has the same Lelong numbers as
$\theta$-psh functions with minimal singularities. Uniqueness follows from \cite[Theorem 3.1]{BEGZ10}
\end{proof}

The following is an important and challenging open problem:

\begin{quest}
Is $\theta+dd^c \rho$ a complete K\"ahler metric in a dense Zariski open subset ?
\end{quest}

 The answer is positive and classical if the cohomology class $\{\theta\}$ is K\"ahler.
 The regularity theory for  Monge-Amp\`ere equations in a big cohomology class
 is  however an open problem 
 (see \cite[Question 21]{DGZ16}), despite a recent  result \cite{DT24}.
If the class $\{\theta\}$ admits a Zariski decomposition, then
$\rho$ is smooth and $\theta+dd^c \rho$ is a K\"ahler form in a Zariski open set.
  Zariski decompositions always exist on surfaces, but not necessary in higher dimension
  (see  \cite{Bou04}).

\subsection{Uniform estimates}

\subsubsection{Purely divisorial case}

\begin{prop} \label{pro:decompobig}
If $\pi^*(\omega+dd^c \f_0)=\sum_{j=1}^q m_j [D_j]$  and $\{\Theta\}$ is nef and big,
then 
$$
\pi^* \omega_t=\sum_{j=1}^q (m_j -(1+b_j)t) [D_j] +t\Theta_\rho
$$
for all $t\in (0,{t_1(\f_0)})$,
where $\rho \in {\mathcal E}^1(Y,\Theta)$ satisfies $(\Theta+dd^c \rho)^n=|\sigma|^{-2}e^{\rho} dV_Y$. Equivalently, at the level of potentials we have
$$
\f_t \circ \pi =u_t +t \rho +n(t\log t-t),
$$
where $u_t:= \sum_{j=1}^q \max(m_j-(1+b_j)t,0) \log |\sigma_j|^2$.
%for all $(x,t) \in X \times (0,{t_1(\f_0)})$.
%where $\rho \in {\mathcal E}^1(Y,\Theta)$ satisfies $(\Theta+dd^c \rho)^n=|\sigma|^{-2}e^{\rho} dV_Y$.
\end{prop}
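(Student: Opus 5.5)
The plan is to write down the explicit candidate
$$
\Phi_t:=u_t+t\rho+n(t\log t-t) \qquad \text{on } Y,
$$
check that it solves the pulled-back flow $(\pi^*\omega+dd^c\Phi_t)^n=e^{\partial_t\Phi_t}\pi^*dV_X$ off $D$, and then identify it with $\f_t\circ\pi$ using the two halves of the maximum principle: maximality of the solution (Theorem \ref{thm:maxflow}) for ``$\leq$'', and the comparison principle (Theorem \ref{thm:pcp}) for ``$\geq$''.

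\textbf{Step 1: $\Phi$ solves the flow.} Since $R_0=0$, we can change the metrics $h_j$ to absorb $\ell_0$. We may then assume $\pi^*\f_0=\sum_j m_j\log|\sigma_j|^2$ and $\pi^*\omega=\sum_j m_j\Theta_j$. For $0<t<t_1(\f_0)$ all the coefficients $m_j-(1+b_j)t$ are positive, so
$$
\pi^*\omega+dd^c u_t=\sum_j (m_j-(1+b_j)t)[D_j]+t\Theta .
$$
On $Y\setminus D$ this gives $\pi^*\omega+dd^c\Phi_t=t(\Theta+dd^c\rho)$, hence
$$
(\pi^*\omega+dd^c\Phi_t)^n=t^n e^{\rho}|\sigma|^{-2}dV_Y
$$
by Theorem \ref{thm:poincaresing}. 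On the other side,
$$
\partial_t\Phi_t=-\sum_j(1+b_j)\log|\sigma_j|^2+\rho+n\log t ,
$$
so that, using $\pi^*dV_X=\prod_j|\sigma_j|^{2b_j}dV_Y$,
$$
e^{\partial_t\Phi_t}\pi^*dV_X=t^n e^\rho\prod_j|\sigma_j|^{-2-2b_j}\prod_j|\sigma_j|^{2b_j}dV_Y=t^n e^{\rho}|\sigma|^{-2}dV_Y .
$$
The two sides agree, so $\Phi$ is a genuine solution in the Zariski open set $\Omega':=(Y\setminus D)\cap\{\rho \text{ smooth}\}$, which is nonempty because $\{\Theta\}$ is big and nef. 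Each $\Phi_t$ is $\pi^*\omega$-psh and bounded above, so it descends to an $\omega$-psh function on $X$ (the map $\pi$ has connected fibres and is an isomorphism off the exceptional locus). Finally $t\rho\to0$ in capacity, so $\Phi_t\to\pi^*\f_0$ as $t\to 0$.

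\textbf{Step 2: $\Phi_t\leq\f_t\circ\pi$.} By Step 1, $\Phi$ (or $\Phi-\e t$ if needed) is a subsolution with initial data $\f_0$, so Theorem \ref{thm:maxflow} gives $\Phi_t\leq \f_t\circ\pi$.

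\textbf{Step 3: $\f_t\circ\pi\leq\Phi_t$.} I would apply Theorem \ref{thm:pcp} on $Y$, with background form $\pi^*\omega$ (semipositive and big), subsolution $\f_t\circ\pi$, supersolution $\psi=\Phi$, and $\Omega=\Omega'$. The input is the decomposition \eqref{eq:decompo}: $\f_t\circ\pi=u_t+v_t$ with $v_t\in\PSH(Y,t\Theta)$, and $v_t\leq C$ uniformly for $t$ in compact subintervals. The hypotheses are then handled as follows.
\begin{itemize}
\item Condition (2): $\f_t\circ\pi-\Phi_t=v_t-t\rho-n(t\log t-t)\leq C-t\rho$. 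So it suffices to take $h=\rho$ (suitably rescaled, plus a constant), which lies in $\mathcal E(Y,a\omega_Y)$ for $a$ large: $\rho\in\mathcal E^1(Y,\Theta)$ with $\Theta\leq A\omega_Y$, and finite energy persists in a larger Kähler class.
\item Condition (3): we need a strictly $\pi^*\omega$-psh barrier below $\Phi_t$ that tends to $-\infty$ on $\partial\Omega'$. I would build it as $\sum_j c_j\log|\sigma_j|^2$ plus a small multiple of the potential of a Kähler current in $\{\Theta\}$ with analytic singularities along the non-Kähler locus, and use that $\rho$ has minimal singularities (Theorem \ref{thm:poincaresing}).
\end{itemize}

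\textbf{Main obstacle.} The delicate point is exactly this verification of the hypotheses of Theorem \ref{thm:pcp} near $t=0$ and near the locus where $\rho$ is not smooth: $\Phi$ is not obviously continuous on $[0,T]\times\Omega'$, and the barrier must be compatible with $\rho$. If the direct argument fails there, I would first run the comparison on $[s,T]$ for small $s>0$, starting from $\Phi_s$ and from $\f_s\circ\pi$. I would then let $s\to0$, using the semigroup property together with the convergence in capacity of $\Phi_s$ and of $\f_s\circ\pi$ to $\pi^*\f_0$ to pass to the limit.
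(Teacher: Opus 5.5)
Your overall plan matches the paper's. You use the same candidate $\Phi_t=u_t+t\rho+n(t\log t-t)$ and the same check that it solves the pulled-back flow off $D$. You get $\Phi_t\le\f_t\circ\pi$ from maximality (Theorem \ref{thm:maxflow}) and the reverse inequality from Theorem \ref{thm:pcp}.

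The gap is in hypothesis (2) of Theorem \ref{thm:pcp}. You keep only $v_t\le C$, so you need $\rho$ itself to lie in $\mathcal{E}(Y,a\omega_Y)$. Your justification is ``$\rho\in\mathcal{E}^1(Y,\Theta)$, $\Theta\le A\omega_Y$, and finite energy persists in a larger K\"ahler class''. That inference is not valid. Membership in $\mathcal{E}(Y,\Theta)$ means full mass \emph{relative to} $V_\Theta$, which is itself singular. The pointwise bound $\Theta\le A\omega_Y$ says nothing about the non-pluripolar mass of $(A\omega_Y+dd^c\rho)^n$.

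For a merely big class the principle is false. In the example of Lemma \ref{lem:conicline} the negative part is $\frac12 E_1$, so $V_\Theta$ has positive Lelong number along $E_1$. By Theorem \ref{thm:poincaresing} so does $\rho$, and then $\rho$ cannot belong to $\mathcal{E}$ of any K\"ahler class. Your argument never uses nefness at this step.

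In the nef and big case your claim is true, but proving it needs two inputs your proposal does not mention:
\begin{enumerate}
\item $\rho\ge V_\Theta+h$ with $h\in\mathcal{E}(Y,\omega_Y)$, which is \cite[Theorem 1.1]{DDP25};
\item $V_\Theta\in\mathcal{E}(Y,a\omega_Y)$, which rests on the positive products of a nef and big class agreeing with its cup products \cite{BEGZ10}.
\end{enumerate}

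The paper avoids this by keeping the structure of $v_t$. By \eqref{eq:decompo}, $v_t=tw_t$ with $w_t\in\PSH(Y,\Theta)$, and Lemma \ref{lem:analytsing} bounds $w_t$ uniformly from above, hence $w_t\le V_\Theta+C$. Take $h\le 0$ as in \cite[Theorem 1.1]{DDP25} and $0\le t\le 1$. Then
\[
\f_t\circ\pi+h\le \f_t\circ\pi+th\le u_t+t(V_\Theta+h)+C\le u_t+t\rho+C\le \Phi_t+C'',
\]
so (2) holds with $h\in\mathcal{E}(Y,\omega_Y)$. The singularities of $V_\Theta$ appear on both sides and cancel, so neither $\rho$ nor $V_\Theta$ needs finite energy in a K\"ahler class.

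Separately, for (3) you rely on $\rho$ having minimal singularities. Theorem \ref{thm:poincaresing} only says that $\rho$ has the same Lelong numbers as a $\Theta$-psh function with minimal singularities. That gives no lower bound of the form $\rho\ge V_\Theta-C$, so it cannot support the barrier you describe. The paper does not spell out (1) and (3) either, but your stated justification for (3) rests on a misreading of that theorem.
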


We only require here $\{\Theta\}$ nef and big to ensure that $\rho$ is smooth in a Zariski open set,
in order to apply the comparison principle (Theorem \ref{thm:pcp}).

\begin{proof}
Recall that $R_0=\pi^* \omega-\sum_j m_j \Theta_j+dd^c \ell_0$. The assumption
$\pi^*(\omega+dd^c \f_0)=\sum_{j=1}^q m_j [D_j]$ is equivalent to 
$R_0=0$,  which  forces $\pi^* \omega=\sum_j m_j \Theta_j$ and $dd^c \ell_0=0$.

The existence of $\rho$ is provided by  Theorem \ref{thm:poincaresing}.
It follows from \eqref{eq:decompo} that $\f_t \circ \pi =u_t +t  w_t$ with $w_t \in \PSH(Y,\Theta)$.
Note that $\pi^* \varphi_0=u_0$. Thus
$
\f_t \circ \pi \leq u_t +t  V_{\Theta}+C,
$
where
$$
V_{\Theta}=\sup \{ u \in \PSH(X,\Theta), \; u \leq 0 \},
$$
hence $\f_t \circ \pi$ has at least the same singularities as $u_t +t  V_{\Theta}$. 
%Note that since $\{\Theta\}$ is K\"ahler, the potential with minimal singularities is a bounded function.

\smallskip

Set $\p_t=u_t+t \rho+n(t \log t-t)$ and observe that it is smooth in a Zariski open subset 
$\Omega \subset Y\setminus D$  since $\{\Theta\}$ is nef and big. 
In $\Omega$ we obtain
$\pi^* \omega+dd^c \p_t=t(\Theta+dd^c \rho)$ hence
$$
(\pi^* \omega+dd^c \p_t)^n=t^n (\Theta+dd^c \rho)^n=t^n \frac{e^{\rho}}{|\sigma|^{2}} dV_Y,
$$
while
$$
e^{\partial_t \p} \pi^* \omega^n=t^n e^{\rho} e^{\partial_t u} \prod |\sigma_j|^{2b_j} dV_Y=t^n \frac{e^{\rho}}{|\sigma|^{2}} dV_Y,
$$
since $e^{\partial_t u}=\prod |\sigma_j|^{-2(1+b_j)}$. 
Thus $\p_t$ is a solution of the flow equation pulled back to $Y$; we infer $\p_t \leq \f_t \circ \pi$ by maximality.

It follows from \cite[Theorem 1.1]{DDP25} that $\rho \in {\mathcal E}(Y,\Theta)$ if and only if there exists
a K\"ahler form $\omega_Y$, $h \in {\mathcal E}(Y,\omega_Y)$ and $C'>0$ such that 
$$
V_{\Theta}+h \leq \rho \leq V_{\Theta}+C'.
$$
We can assume $h \leq 0$  and $0 \leq t \leq 1$ without loss of generality.
Thus
$$
\f_t \circ \pi + h \leq \f_t \circ \pi +t h \leq  u_t+t(V_{\Theta}+h) +C \leq u_t +t \rho+C \leq \p_t+C''.
$$
We can conclude by Theorem \ref{thm:pcp} (applied with $f=\sum_j \log |\sigma_j|^{2b_j}$) that $\p_t \geq \f_t \circ \pi$.
\end{proof}

 \subsubsection{General case}

 When $\pi^*(\omega+dd^c \f_0)=\sum_{j=1}^q m_j[D_j]+R_0$ is not purely divisorial, 
 the analysis is more involved. 
 We let
 $\rho \in {\mathcal E}^1(Y,\lambda(\f_0)  \Theta+R_0)$ denote the unique solution of 
 $$
 \left( \lambda(\f_0)  \Theta+ R_0+dd^c \rho \right)^n =e^{\rho} |\sigma|^{-2} dV_Y.
 $$

 \begin{prop} \label{pro:uniformbig}
Assume that $\{\lambda(\f_0) \Theta+R_0\}$ is a K\"ahler class. Then 
there exists $C_2>0$ such that  
for all $t \in  (0,{t_1(\f_0)})$,
$$
u_t+t \rho+n(t\log t-t)-C_2 \leq \f_t \circ \pi \leq   u_t+t \rho +C_2 t.
$$
\end{prop}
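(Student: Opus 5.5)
The plan is to argue as in Proposition \ref{pro:decompobig}: build an explicit subsolution and an explicit supersolution of the pulled-back flow
$$
(\pi^*\omega+dd^c \p_t)^n=e^{\partial_t \p_t}\prod_j|\sigma_j|^{2b_j}dV_Y
$$
on $Y$, both of the form $u_t+t\rho+O(t)$. The lower bound will then follow from maximality (Theorem \ref{thm:maxflow}), and the upper bound from the comparison principle (Theorem \ref{thm:pcp}).

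\textbf{Step 1: regularity of $\rho$.} First I use the Kähler assumption. Since $\{\lambda(\f_0)\Theta+R_0\}$ is Kähler, the solution $\rho$ of $(\lambda(\f_0)\Theta+R_0+dd^c\rho)^n=e^\rho|\sigma|^{-2}dV_Y$ is the classical Poincaré-type solution. By the log smooth theory (Kobayashi, Tian--Yau, and as used in \cite{DGL26}), I expect three facts:
\begin{itemize}
\item $\rho$ is smooth on $Y\setminus D$;
\item $\rho=-\sum_j 2\log(-\log|\sigma_j|^2)+O(1)$;
\item $\omega_\rho:=\lambda(\f_0)\Theta+R_0+dd^c\rho$ is quasi-isometric to a Poincaré metric $\omega_P$ along $D$.
\end{itemize}
In particular $\omega_\rho\ge c\,\omega_P\ge c'\omega_Y$. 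Also $\rho\in{\mathcal E}$, because $\log\log$ singularities have full mass. I normalize the metrics $h_j$ (absorbing the smooth bounded $\ell_0$ into constants) so that $\pi^*\omega+dd^c u_t=R_0+t\Theta$ off $D$, up to bounded error.

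\textbf{Step 2: two-sided Monge--Ampère bounds.} Set $\lambda=\lambda(\f_0)$. Off $D$,
$$
\pi^*\omega+dd^c(u_t+t\rho)=R_0+t\Theta+t\,dd^c\rho=t\,\omega_\rho+(1-t)R_0+t(1-\lambda)\Theta .
$$
$R_0\ge0$ is smooth, $\Theta$ is a bounded smooth form, and $\omega_\rho\ge c'\omega_Y$. Hence for $t<t_1(\f_0)$ this form lies between $c\,t\,\omega_\rho$ and $C(\omega_\rho+\omega_Y)\le C'\omega_\rho$. The term $(1-t)R_0$ is what forces the upper comparison constant to be of order $1$ rather than of order $t$; this is harmless, since the factor only enters through a logarithm, as below.

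Taking $n$-th powers gives
$$
c^n t^n e^\rho|\sigma|^{-2}dV_Y\le(\pi^*\omega+dd^c(u_t+t\rho))^n\le C\, e^\rho|\sigma|^{-2}dV_Y .
$$
On the other hand, $e^{\partial_t u_t}\prod_j|\sigma_j|^{2b_j}=|\sigma|^{-2}$.

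\textbf{Step 3: subsolution and lower bound.} Set $\p_t=u_t+t\rho+n(t\log t-t)-C_2t$. Then
$$
e^{\partial_t\p_t}\prod_j|\sigma_j|^{2b_j}dV_Y=t^ne^{\rho-C_2}|\sigma|^{-2}dV_Y ,
$$
so $\p_t$ is a subsolution once $e^{-C_2}\le c^n$. Its initial value is $\p_0=u_0=\pi^*\f_0-\ell_0$, so subtracting a further constant $C_2$ gives a subsolution lying below $\pi^*\f_0$ at $t=0$. Maximality then gives the lower bound $u_t+t\rho+n(t\log t-t)-C_2\le\f_t\circ\pi$.

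\textbf{Step 4: supersolution and upper bound.} Set $\p_t=u_t+t\rho+C_2t$. Then $e^{\partial_t\p_t}\prod_j|\sigma_j|^{2b_j}=e^{\rho+C_2}|\sigma|^{-2}$, which dominates the upper Monge--Ampère bound of Step 2 once $C_2\ge\log C$. So $\p_t$ is a supersolution. I then apply Theorem \ref{thm:pcp} with $f=\sum_j\log|\sigma_j|^{2b_j}$, exactly as in Proposition \ref{pro:decompobig}, checking its three hypotheses:
\begin{itemize}
\item the initial inequality $\f_0\circ\pi\le\p_0$, up to the normalization of $\ell_0$;
\item a barrier $\f_t\circ\pi+h\le\p_t$ with $h\in{\mathcal E}$: since $\f_t\circ\pi=u_t+v_t$ with $v_t$ bounded above by \eqref{eq:decompo}, one can take $h=\rho$ rescaled, which is legitimate because $\rho$ has only $\log\log$ singularities;
\item the lower bound $\rho_Y\le\p_t$ for a strictly psh exhaustion $\rho_Y$ with poles on $D$.
\end{itemize}

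The main obstacle is Step 1: the uniform quasi-isometry $\omega_\rho\simeq\omega_P$ together with the precise $\log\log$ asymptotics of $\rho$. This is where the Kähler hypothesis is essential, and I would import it from the Poincaré-type Monge--Ampère theory in \cite{DGL26} rather than reprove it.
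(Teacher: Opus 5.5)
Your strategy is the paper's: a barrier $u_t+t\rho$, corrected by $n(t\log t-t)$ from below and by $Ct$ from above. Maximality gives the lower bound, and Theorem \ref{thm:pcp} with $h$ a multiple of $\rho$ gives the upper bound. The K\"ahler hypothesis is used to get $R_0\le C\,\omega_\rho$; the paper only needs that $\omega_\rho$ is a K\"ahler current smooth off $D$, not the full Poincar\'e quasi-isometry of your Step 1. Your Step 4 is fine, modulo the $\ell_0$ bookkeeping below, because a supersolution only needs the Monge--Amp\`ere bound where the form is nonnegative.

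\textbf{Gap in Step 2.} The lower bound $\ge c\,t\,\omega_\rho$, on which Step 3 rests, is not justified. You do not normalize $\la=\la(\f_0)$, and then the identity $R_0+t\Theta+t\,dd^c\rho=t\,\omega_\rho+(1-t)R_0+t(1-\la)\Theta$ does not give it, for two reasons:
\begin{enumerate}
\item The bound $\omega_\rho\ge c'\omega_Y$, with $c'$ uncontrolled, does not absorb the term $t(1-\la)\Theta$.
\item When $\la>1$ one may have $t_1(\f_0)>1$, and then $(1-t)R_0\le 0$ for $t\in(1,t_1(\f_0))$.
\end{enumerate}
This is not just presentation. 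Take $n=1$, $\pi=\mathrm{id}$ and $\la<1$, and replace $h_1$ by $h_1e^{-f}$ with $\sup|f|\le 1$. This leaves $R_0$ unchanged and changes $\rho$ into $\tilde\rho-\la f$, with $\tilde\rho-\rho$ bounded independently of $f$. The form becomes $t\,\omega_{\tilde\rho}+(1-t)R_0+t(1-\la)(\Theta+dd^cf)$, where near any $p\neq a$ the density $\omega_{\tilde\rho}$ is bounded independently of $dd^cf$. Choosing $dd^cf$ very negative at $p$, your Step 3 candidate is not even $\pi^*\omega$-psh.

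\textbf{The fix.} First rescale $(\omega,\f_0,t)$ so that $\la(\f_0)=1$, as the paper does. Then $t<t_1(\f_0)\le 1$. Keeping $\ell_0$ inside the barrier, $\p_t=u_t+t\rho+n(t\log t-t)+\ell_0$ satisfies exactly $\pi^*\omega+dd^c\p_t=t\,\omega_\rho+(1-t)R_0\ge t\,\omega_\rho$ and $\p_0=\pi^*\f_0$. So it is a subsolution with no extra $-C_2t$. Returning to the original normalization changes $\rho$ only by a bounded function, which your Step 1 asymptotics supply.

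\textbf{The $\ell_0$ bookkeeping.} $\ell_0$ cannot be ``absorbed into constants'', because $dd^c\ell_0$ enters the positivity of $\pi^*\omega+dd^c\p_t$. Step 1 says you absorb it into the metrics $h_j$, while Step 3 uses $u_0=\pi^*\f_0-\ell_0$ as if you had not. Keep it in the barrier, as above.
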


The lower bound always holds, but we need the assumption that 
$\{\lambda(\f_0)  \Theta+R_0\}$ is a K\"ahler class 
in order to establish the upper-bound:
it ensures that the Poincaré potential $\rho$ is smooth in  $Y\setminus D$ and that 
$\lambda(\f_0)  \Theta+ R_0+dd^c \rho$ is a K\"ahler current on $Y$
(i.e. dominates a K\"ahler form).

\begin{proof}
Recall that $R_0=\pi^* \omega-\sum_j m_j \Theta_j+dd^c \ell_0 \geq 0$. 
 Rescaling if necessary we can assume that $\lambda(\f_0) =1$.
Set $\p_t=u_t+t \rho+n(t \log t-t)+\ell_0$. This function is $\pi^* \omega$-psh and $\p_0=\pi^*\varphi_0$.
In $Y \setminus D$ we obtain
$$
\pi^* \omega+dd^c \p_t = t(\Theta+R_0+dd^c \rho)+(1-t) R_0,
$$
 hence
\begin{eqnarray*}
(\pi^* \omega+dd^c \p_t)^n &\geq& t^n (\Theta+R_0+dd^c \rho)^n= t^n \frac{e^{\rho}}{|\sigma|^{2}} dV_Y
\end{eqnarray*}
while
$
e^{\partial_t \p} \pi^* \omega^n=t^n \frac{e^{\rho}}{|\sigma|^{2}} dV_Y.
$
Thus $\p_t$ is a subsolution of the flow on $Y$, hence $\p_t \leq \f_t \circ \pi$.

\smallskip 

Since $\{\Theta+R_0\}$ is a K\"ahler class, the current 
$(\Theta+R_0+dd^c \rho)$ is a K\"ahler current. In particular
the smooth semi-positive form $R_0$ is bounded from above
by some multiple of $(\Theta+R_0+dd^c \rho)$, hence
$
\pi^* \omega+dd^c \p_t \leq  e^{C} (\Theta+R_0+dd^c \rho).
$
We infer 
\begin{eqnarray*}
(\pi^* \omega+dd^c \p_t)^n &\leq& e^{nC} (\Theta+R_0+dd^c \rho)^n 
= \frac{e^{\rho+nC}}{|\sigma|^{2}} dV_Y
\end{eqnarray*}
while
$
e^{\partial_t w} \pi^* \omega^n= \frac{e^{\rho}}{|\sigma|^{2}} dV_Y.
$
Thus $\p_t+Ct$ is a supersolution of the flow on $Y \setminus D$.
Theorem \ref{thm:pcp} can be applied since $\psi_t$ is smooth on $Y\setminus D$, $\psi_t\geq \varphi_t\circ \pi -Ct +t\rho$ with $\rho \in \mathcal{E}(Y, \Theta+R_0)$.
\end{proof}

  \subsection{${\mathcal C}^2$-estimates}\label{C2 est resolution}
 
Once the ${\mathcal C}^0$-estimate is under control,
one can argue as in the proof of the main result of \cite{DGL26}
to compare $\omega_t$ to a Poincaré metric along $D$.
 
  \begin{theorem} \label{thm:c2hyp}
  Assume that $\{\lambda(\f_0) \Theta+R_0\}$ is a K\"ahler class. Then  there exists $C>0$ such that, for all $t \in  (0,{t_1(\f_0)})$, 
	 $$
	 C^{-1}t^n \beta_P \leq \omega_t \leq C \beta_P,
	 $$
	 where $\beta_P$ is the Poincar\'e metric along $D=\sum_{j=1}^q D_j$ 
	 in the class $\{\lambda(\f_0) \Theta+R_0\}$.
\end{theorem}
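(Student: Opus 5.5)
We work on $Y$ with the pulled back flow $\tilde\omega_t:=\pi^*\omega_t=\pi^*\omega+dd^c(\f_t\circ\pi)$, restricted to $Y\setminus D$. There the flow equation reads
$$
\tilde\omega_t^n=e^{\dot\f_t\circ\pi}\prod_j|\sigma_j|^{2b_j}dV_Y .
$$
After rescaling we assume $\lambda(\f_0)=1$. The plan is to compare $\tilde\omega_t$ with a fixed reference metric $\beta_P$. We take $\beta_P=\theta+dd^c\rho_P$ on $Y\setminus D$, where $\theta$ is a Kähler form in the class $\{\Theta+R_0\}$ and
$$
\rho_P=-\sum_j\log(-\log|\sigma_j|^2).
$$
This metric is complete and of Poincaré type, and it has bounded geometry, in particular bounded holomorphic bisectional curvature (Kobayashi, Tian--Yau). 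By Proposition \ref{pro:uniformbig} and the uniqueness in Theorem \ref{thm:poincaresing}, $\rho-\rho_P$ is bounded. This is the classical $\mathcal C^0$ estimate for the Poincaré Kähler--Einstein-type equation in a Kähler class.

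Proposition \ref{pro:uniformbig} then writes $\f_t\circ\pi=u_t+t\rho+\ell_0+O(1)$ in terms of a function smooth on $Y\setminus D$. The upper bound $\f_t\circ\pi-u_t-\ell_0\le t\rho+C_2t$ together with the lower bound is exactly what the maximum principle will use.

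\textbf{Step 1: bounds on $\dot\f_t$.} We prove
$$
\log t^n-C\le\dot\f_t\circ\pi+\log\Big(|\sigma|^{2}\prod_j|\sigma_j|^{2b_j}\Big)\cdot(\text{suitable form})\le C .
$$
Concretely, we show that $\dot\f_t\circ\pi-\partial_tu_t-\rho$ is bounded above by $C$ and below by $n\log t-C$. For the upper bound we use the concavity of $t\mapsto\f_t-n(t\log t-t)$, which gives $t\dot\f_t\le\f_t-\f_0+O(t)$, combined with the two-sided $\mathcal C^0$ estimate. For the lower bound we apply the maximum principle to $H=\dot\f_t-\partial_tu_t-\rho-\varepsilon\rho_P-n\log t$, as in Proposition \ref{prop:complete} and \cite[Proposition 4.1]{DGL26}. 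Here $\partial_tu_t=-\sum(1+b_j)\log|\sigma_j|^2$ and $(\partial_t-\Delta_t)\dot\f_t=0$, and the barrier term $\varepsilon\rho_P$ forces the minimum into the interior of $Y\setminus D$. The upshot is that the volume ratio $\tilde\omega_t^n/\beta_P^n$ lies between $C^{-1}t^n$ and $C$.

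\textbf{Step 2: trace estimate.} This is the main obstacle. We bound $\tr_{\beta_P}\tilde\omega_t$ from above by a parabolic Chern--Lu/Aubin--Yau computation applied to
$$
G=\log\tr_{\beta_P}\tilde\omega_t-A\big(\f_t\circ\pi-u_t-\ell_0-t\rho_P\big)+\varepsilon\rho_P ,
$$
the bracket being a bounded-oscillation potential relative to $\beta_P$ by Proposition \ref{pro:uniformbig}. The bisectional curvature bound on $\beta_P$ controls $(\partial_t-\Delta_t)\log\tr_{\beta_P}\tilde\omega_t$ by $C\tr_{\tilde\omega_t}\beta_P$ plus the Ricci term. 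That Ricci term is $\Ric(\pi^*\omega)$ twisted by the discrepancy currents, which is smooth on $Y$ and hence bounded by $C\beta_P$. Choosing $A$ large absorbs this into $-A\tr_{\tilde\omega_t}\beta_P$, which comes from $-A\,dd^c$ of the potential since $\tilde\omega_t-t\beta_P$ is controlled. The $\varepsilon\rho_P$ barrier again localizes the maximum, as $\rho_P\to+\infty$ at $D$ after the sign change. The delicate points are that the reference form changes with $t$ (the term $t\beta_P$), and that the constants must be uniform as $t\to0$. We handle these as in \cite{DGL26}, multiplying $\tr$ by a factor $t$ if necessary, which yields $\tr_{\beta_P}\tilde\omega_t\le C$ and hence $\tilde\omega_t\le C\beta_P$.

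\textbf{Step 3: lower bound.} Combining $\tilde\omega_t\le C\beta_P$ with the volume lower bound $\tilde\omega_t^n\ge C^{-1}t^n\beta_P^n$ from Step 1, the elementary inequality
$$
\tr_{\tilde\omega_t}\beta_P\le n\,(\tr_{\beta_P}\tilde\omega_t)^{n-1}\,\frac{\beta_P^n}{\tilde\omega_t^n}
$$
gives $\tilde\omega_t\ge C^{-1}t^n\beta_P$. This is the claimed estimate.
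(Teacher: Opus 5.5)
Your proposal has the same architecture as the paper's proof. It uses the two-sided $\mathcal C^0$ bound of Proposition \ref{pro:uniformbig} and two-sided bounds on $\dot v_t$, with $v_t=\f_t\circ\pi-u_t-\ell_0-t\rho$. It then runs a parabolic Aubin--Yau estimate against a Poincar\'e-type reference of bounded bisectional curvature, and finishes with the determinant inequality for the lower bound on the metric. Your concavity argument for $\dot v_t\le C$ is fine: it is the same inequality $t\dot\psi_t\le\psi_t-\psi_0+nt$ that the paper obtains by a maximum principle.

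The genuine gap is the lower bound on $\dot v_t$. For $H=\dot v_t-\varepsilon\rho_P-n\log t$ one finds on $Y\setminus D$
\[
(\partial_t-\Delta_t)\dot v_t=\tr_t(\Theta+dd^c\rho)=\tr_t(\Theta_\rho)-\tr_t(R_0),\qquad \Theta_\rho:=\Theta+R_0+dd^c\rho .
\]
Nothing makes $\Theta+dd^c\rho$ semipositive: the K\"ahler assumption only gives $\Theta_\rho\ge\gamma R_0$ for some small $\gamma>0$. So at a minimum point you cannot bound $(\partial_t-\Delta_t)H$ from below by $c\,e^{-\dot v_t/n}-n/t$. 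The barrier only adds $\varepsilon\tr_t(dd^c\rho_P)$, which cannot dominate $\tr_t(R_0)$. Your argument therefore closes only when $\Theta_\rho\ge R_0$ (e.g.\ $R_0=0$), not in the generality of the theorem.

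The missing idea is the paper's correction term $Av_t$. One has
\[
(\partial_t-\Delta_t)v_t=\dot v_t-n+t\tr_t(\Theta_\rho)+(1-t)\tr_t(R_0).
\]
Hence $G=\dot v_t+Av_t-n\log t$ with $A\ge\gamma^{-1}$ satisfies
\[
(\partial_t-\Delta_t)G\ge\tr_t(\Theta_\rho)+A\dot v_t-An-n/t\ge n\,e^{-\dot v_t/n}+A\dot v_t-An-n/t,
\]
using $\tilde\omega_t^n=e^{\dot v_t}\Theta_\rho^n$. The upper bound $\dot v_t\le C$ absorbs $A\dot v_t$. The two-sided bound on $v_t$ then converts the bound on $G$ at its minimum into $\dot v_t\ge n\log t-C$.

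Three smaller points.

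\begin{enumerate}
\item In Step 2 the factor $t$ is not optional, since the only good term produced by $-A\,dd^c$ of the potential is $-At\tr_t(\beta_P)$. You must work with $\alpha=t\log u-Av_t$, where $u=\tr_{\Theta_\rho}(\tilde\omega_t)$. Then $\partial_t(t\log u)$ produces an extra $+\log u$ that your sketch does not address. The paper controls it at the maximum point by combining $t\tr_t(\Theta_\rho)\le\log u-A\dot v_t+C$ with $u\le n(\tr_t\Theta_\rho)^{n-1}e^{\dot v_t}$ and $\dot v_t\le C$. This gives $\log u\le C-2(n-1)\log t_0$ there.

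\item The bound $\alpha\le C$ by itself only yields $t\log u\le C+Av_t$. Tracking the value of $\alpha$ at its maximum, this is a bound of the form $u\le Ct^{-N}$. The $t$-independent bound $\tr_{\beta_P}\tilde\omega_t\le C$ that you assert (and that the paper also states only briefly) needs an additional argument when $n\ge 2$, because the term $-2(n-1)\log t_0$ survives.

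\item With your $\rho_P=-\sum_j\log(-\log|\sigma_j|^2)$ the difference $\rho-\rho_P$ is not bounded. Matching $e^{\rho}|\sigma|^{-2}dV_Y$ with the Poincar\'e volume form forces $\rho=-2\sum_j\log(-\log|\sigma_j|^2)+O(1)$. With that normalization, boundedness is the classical Kobayashi/Tian--Yau estimate; it does not follow from Proposition \ref{pro:uniformbig}.
\end{enumerate}
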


  We can assume $\lambda(\varphi_0)=1$ and work on the interval $(0, 1]$.
  Then $\psi_t:=\varphi_t\circ \pi -u_t$ solves
\[
(R_0 + t\Theta + dd^c \psi_t)^n = \frac{e^{\dot{\psi_t}}}{|\sigma|^2} dV_Y.
\]
Set $\omega_t:=R_0 + t\Theta + dd^c \psi_t$, $v_t:= \psi_t-t\rho$
and observe that $\psi_0=0$.
It follows from Proposition \ref{pro:uniformbig} that
 $n(t\log t-t)-C \leq v_t \leq Ct$, $C>0$.

\subsubsection{Time derivative} 

One can show that $\dot{v}_t$ is bounded from above
by showing that
\[
\dot{\psi}_t \leq \frac{\psi_t-\psi_0}{t}+ n \leq \rho +C.
\]
%hence $\dot{v}_t\leq C$. 
One can indeed consider 
$H=t \dot{\psi}_t - (\psi_t-\psi_0) -(n+\varepsilon) t$, $\varepsilon>0$, 
and make sure (using an approximation) that the maximum of $H$ is attained at some point
$(t_0,x_0)$ with  $x_0\in Y\setminus D$. If $t_0>0$ 
we reach a contradiction as the following computation shows
\begin{flalign*}
	0 &\leq (\partial_t-\Delta_t) H 
	 = t \ddot{\psi}_t -n -\varepsilon - t \Delta_t \dot{\psi}_t + \Delta_t(\psi_t)\\
	&= t \tr_t(\Theta+dd^c \dot{\psi}_t) -n-\varepsilon- t \Delta_t \dot{\psi}_t + \tr_t(\omega_t - R_0-t\Theta) \leq -\varepsilon.
\end{flalign*}
Since $H =0$ along $(t=0)$, we obtain $H(t,x)\leq 0$ and the estimate follows. 

\smallskip

We next bound $\dot{v}_t$ from below by considering $G= \dot{v}_t+Av_t-n \log t$. 
Arguing by approximation as in \cite[Proposition 3.1]{DGL26}, we can ensure that $G$ attains its minimum at 
$(t_0,x_0)$ with $t_0>0$ and $x_0 \in Y\setminus D$. Using that 
\[
\ddot{v}_t=\ddot{\psi}_t = \tr_t (\Theta + dd^c \dot{\psi_t}) = \Delta_t\dot{v_t}+\tr_t(\Theta+dd^c \rho),
\] 
we obtain, at $(t_0,x_0)$,
\begin{flalign*}
	0& \geq (\partial_t-\Delta_t) G
	= \ddot{v}_t + A\dot{v_t} - nt^{-1} -\Delta_t \dot{v}_t - A \Delta_t v_t\\
	& = \Delta_t \dot{v_t} + \tr_t (\Theta+dd^c \rho) +A \dot{v}_t-nt^{-1}-\Delta_t \dot{v}_t - A\tr_t(R_0+t\Theta+dd^c \psi_t -tdd^c \rho -R_0-t\Theta)\\
	& \geq \tr_t (\Theta+dd^c \rho)+ A\dot{v}_t - nt^{-1}-An + tA\tr_t(R_0+\Theta+dd^c \rho)+ A(1-t)\tr_t(R_0)\\
	& \geq \tr_t(\Theta_{\rho}) + A\dot{v}_t - nt^{-1}-An + \tr_t(At\Theta_{\rho}+ (A(1-t)-1)R_0).
\end{flalign*} 
Here $\Theta_{\rho}:=\Theta+R_0+dd^c \rho$ is a K\"ahler current,
hence  $\Theta_{\rho} \geq \gamma R_0$ for $\gamma>0$ small.
Thus
\[
At\Theta_{\rho}+ (A(1-t)-1)R_0 \geq (\gamma At + A(1-t) -1)R_0\geq (A\gamma -1)R_0\geq 0, 
\]
 choosing $A\geq \gamma^{-1}$. 
Using  $\tr_\alpha (\beta)\geq n  \left(\frac{\beta^n}{\alpha^n}\right)^{1/n}$ we infer
\begin{flalign*}
	0& \geq \tr_t(\Theta_{\rho}) + A\dot{v}_t - nt^{-1}-An\geq   n e^{-\dot{v}_t/n} + A \dot{v}_t - nt^{-1}-An.
\end{flalign*} 
Since $\dot{v}_t\leq C$ we can find $C_1>0$ so large that
$
A\dot{v}_t \geq -\frac{n}{2} e^{-\dot{v}_t/n} - C_1,
$
hence  
\[
 e^{-\dot{v}_t/n} \leq C_2(t^{-1}+1),
\]
at $(t_0,x_0)$. Thus $G(t_0,x_0)\geq -C_3$ and we conclude that $\dot{v}_t\geq -C_4t+n\log t$.

\subsubsection{Laplacian estimate}

Set $\omega_t:= R_0 +t \Theta + dd^c \psi_t$ as above and $\Theta_\rho:=\Theta+R_0+dd^c \rho$. Consider the function $\alpha = t \log u - Av_t$, where $u= \tr_{\Theta_{\rho}}(\omega_t)$.  
We have just shown that
$$
n(t\log t-t)-Ct \leq v_t \leq Ct
\; \; \text{ and } \; \; 
-C +n \log t \leq \dot{v}_t \leq C.
$$

Let $-B$ be the lower bound for the holomorphic bisectional curvature of $\Theta_\rho$ in $Y\setminus D$. 
It follows from Yau's Laplacian estimate (see \cite{Yau78}, \cite[Lemma 14.5]{GZbook}) that
$$
\Delta_{t} \log \tr_{\Theta_{\rho}}( \omega_t) 
\geq -\frac{\tr_{\Theta_{\rho}} (\Ric(\omega_t))} {\tr_{\Theta_{\rho}}(\omega_t)}
-B \tr_{t}(\Theta_{\rho}).
$$
Now  $\Ric(\Theta_{\rho}) \leq C \Theta_{\rho}$ in $Y\setminus D$
and $\dot{\psi}_t=\rho + \dot{v}_t$, hence $\omega_t^n=e^{\dot{v}_t} \Theta_\rho^n$. 
It  follows therefore that on $Y\setminus D$,
\begin{flalign*}
\Delta_{t} \log \tr_{\Theta_{\rho}}( \omega_t) &\geq  \frac{\tr_{\Theta_{\rho}} (dd^c \dot{v}_t) -Cn } {u}-B \tr_{t}(\Theta_{\rho}).
\end{flalign*}

Arguing as in the proof of \cite[Lemma 3.4]{DGL26}, we can assume that $\alpha$
reaches its maximum at  $(t_0,x_0) \in (0,t_1) \times X \setminus D$.
We can moreover assume that $u(t_0,x_0) \geq 1$, hence
{\small
\begin{flalign*}
	0 &\leq (\partial_t-\Delta_t) \alpha\\ 
	&= \log u + t u^{-1} \partial_t u -A\dot{v}_t- t \Delta_t \log u +  A\Delta_tv_t\\
%	&= \log u + t u^{-1} \partial_t u -A\dot{v}_t- t \Delta_t \log u + A\tr_t (R_0+t\Theta+dd^c \psi_t-tdd^c \rho -R_0-t\Theta)\\
	&\leq \log u + t \frac{\tr_{\Theta_\rho}(\Theta+dd^c \dot{\psi}_t)}{u} -A\dot{v}_t
	- t \left\{ \frac{\Delta_{\Theta_{\rho}}\dot{v}_t}{u} -Cn - B\tr_{t}(\Theta_{\rho}) \right\}  
	+ A\tr_t (\omega_t-t\Theta_\rho- (1-t)R_0)\\
	&\leq \log u + t \frac{\tr_{\Theta_\rho}(\Theta_{\rho})}{u} -A\dot{v}_t +Cnt + tB\tr_{t}(\Theta_{\rho})  + An -At\tr_t (\Theta_\rho)\\
	&\leq \log u -A\dot{v}_t+ t(B-A)\tr_{t}(\Theta_{\rho})  + C.
	\end{flalign*}
	}
	Choosing $A=B+1$, we arrive at
\[
t \tr_{t}(\Theta_{\rho} )\leq \log u -A\dot{v}_t + C.
\]
Using the elementary inequality 
\[
\tr_{\Theta_{\rho}}(\omega_t) \leq n (\tr_t(\Theta_{\rho}))^{n-1} \frac{\omega_t^n}{\Theta_{\rho}^n}= n (\tr_t(\Theta_{\rho}))^{n-1}e^{\dot{v}_t},
\] 
we obtain
\begin{eqnarray*}
\log u 
&\leq &  (n-1) \log ( t \tr_t(\Theta_{\rho}))+ \dot{v}_t - (n-1) \log  t +\log n	\\
& \leq & (n-1) \log (\log u - A\dot{v}_t +C) + \dot{v}_t  -(n-1)\log t +\log n \\
& \leq & (n-1) \log \left(\frac{A\log u}{2} - A\dot{v}_t +C\right) + \dot{v}_t  -(n-1)\log t +\log n \\
& \leq & \frac{\log u}{2}  +C' -(n-1)\log t.
\end{eqnarray*}
We have used here $A \geq 2$, hence 
$\log u \leq \frac{A\log u}{2}$ since  $1 \leq u$, together with the
 elementary inequality $A (n-1) \log x \leq x+c(n,A)$.
Thus $ \log u\leq 2C'-2(n-1)\log t_0 $
 at $(t_0, x_0)$, which yields
 $\alpha \leq C''$, providing a uniform upper bound $u\leq C$ as $v_t$ is bounded from above by $Ct$.

The lower bound on $\dot{v}_t$ yields the reverse inequality, since
\[
\tr_{\omega_t}(\Theta_{\rho}) \leq n 
\left( \tr_{\Theta_{\rho}}(\omega_t) \right)^{n-1} \left (\frac{\Theta_{\rho}^n }{\omega_{t}^n} \right ) 
 \leq C e^{-\dot{v}_t} \leq C' t^{-n},
\]
concluding the proof of Theorem \ref{thm:c2hyp}.

\section{Singular divisors} \label{sec:singulardivisors}

We have made in  \cite{DGL26} a thorough study of the case when $\f_0$ has divisorial singularities along
 a log smooth divisor $D$. The analysis becomes more delicate when 
$D$ has singularities. We consider in this section various examples of such singular divisors.

   \subsection{Log canonical divisors} \label{sec:logcan}

We fix holomorphic line bundles $L_1,\ldots,L_q$ equipped with Hermitian metrics $h_1,\ldots,h_q$. 
For a holomorphic section $\sigma_i \in H^0(X,L_i)$, we denote by $|\sigma_i|=|\sigma_i|_{h_i}$ the norm of $\sigma$ with respect to  $h_i$. 
Rescaling $h_i$, we can assume that $|\sigma_i|\leq \frac{1}{1000}$ on $X$. 
We analyze here the case when there exists $m_i>0$ such that
$$
\f_0= \sum_{i=1}^q m_i \log |\sigma_i|_{h_i}^2.
$$
This function is $\omega$-psh if  and only if  $\sum_i m_i \Theta_{i} \leq \omega$, 
where $\Theta_{i}$ denotes the curvature of the metric $h_i$,
a condition that we implicitly assume.
We set $D=\sum_i D_i$ and $|\sigma|^2=\prod_i |\sigma_i|^2$.

\begin{defi}
The divisor $D$ is {\it log smooth} if each divisor $D_i$ is smooth and the $D_i$'s have simple normal crossings,
 i.e. $D$ is locally isomorphic to $H_1 + H_2 + ...+H_k$ where 
 $H_1,\ldots,H_k$ are the coordinate hyperplanes.  
 The divisor $D$ is {\it log canonical} if $c(\log|\sigma|^2)=1$.
\end{defi}

We focus in this section on the case when the divisor $D$ is log canonical. 
%In particular $\lambda (T_0)=1$.
 A classical example 
 %is the  ordinary double point,
%An explicit example 
is   the cone over the smooth quadric
$
D=\left\{ [z] \in \C \PP^n , \; 
\sum_{i=0}^{n-1} z_i^2=0 \right\},
$
which has an isolated singularity at the vertex, an {\it ordinary double point}.
%with $X=\C \PP^n$, $\omega=\omega_{FS}$ and $a=[0:\cdots:0:1]$.

\subsubsection{A logarithmic upper-bound}

The following result describes the divisorial asymptotic of $\f_t$ along  $D_i$,
 complementing  Lemma \ref{lem:analytsing}.

\begin{lem} \label{lem:log}
	There is a uniform constant $C>0$ such that for $0<t <1$,	
	\[
	\varphi_t \leq  \sum_{i=1}^q \max(m_i- t,0) \log |\sigma_i|^2 +Ct.
	\]
\end{lem}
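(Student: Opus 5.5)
We adapt the supersolution argument of Lemma \ref{lem:analytsing}, now working directly on $X$ instead of on a log resolution. The divisor $D$ is no longer assumed log smooth, so the rank-one trick must be replaced by an integrability statement. That statement is precisely what log canonicity provides: $c(\log|\sigma|^2)=1$.

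For $\e>0$ we consider the regularized candidates
$$
v_{t,\e}=\sum_{i=1}^q (m_i-t)\log(|\sigma_i|^2+\e^2) + t\,\rho_\e ,
$$
where $\rho_\e$ is a bounded correction chosen so that $v_{t,\e}$ is a supersolution of the flow. Here $\partial_t v_{t,\e}=-\sum_i\log(|\sigma_i|^2+\e^2)+\rho_\e$, so $e^{\partial_t v}dV_X=e^{\rho_\e}\prod_i(|\sigma_i|^2+\e^2)^{-1}dV_X$. On the other side, where $\omega+dd^c v_{t,\e}\ge 0$, we have
$$
\omega+dd^c v_{t,\e}\le C\omega+\sum_i \frac{i\,\partial\sigma_i\wedge\overline{\partial\sigma_i}}{|\sigma_i|^2+\e^2}+t\,dd^c\rho_\e .
$$
The difficulty is that at singular points of $D$ the $n$-th power of this sum is no longer dominated by $\prod_i(|\sigma_i|^2+\e^2)^{-1}dV_X$, because the gradients $\partial\sigma_i$ may vanish or fail to be transverse.

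To handle this, I would define $\rho_\e$ through an auxiliary Monge--Ampère equation, following the proof of Lemma \ref{lem:singdim1}. Let $\rho_\e\in\PSH(X,A\omega)$ solve
$$
(A\omega+dd^c\rho_\e)^n=c_\e\, e^{\rho_\e}\prod_i(|\sigma_i|^2+\e^2)^{-(1-\delta)}dV_X ,
$$
or a comparable equation. Because $D$ is log canonical, $\prod_i|\sigma_i|^{-2(1-\delta)}\in L^p$ for some $p>1$. Kołodziej's estimate then gives $\|\rho_\e\|_\infty\le C$ uniformly in $\e$. In practice it may be cleaner to prove the bound only for $\f_t\le \sum_i(m_i-(1-\delta)t)\log|\sigma_i|^2+C_\delta t$ and let $\delta\to 0$ only at the level of Lelong and Arnold data. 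Since the statement requires the sharp coefficient $m_i-t$, however, I would first attempt a direct route. This uses the mixed-term inequality $(\alpha+\beta)^n\ge \alpha^n$ together with the upper bound of the form $(\omega+dd^c\log(|\sigma|^2+\e^2))^n\le C\,(|\sigma|^2+\e^2)^{-1}\cdot\big(\text{integrable density}\big)$, valid away from $\mathrm{Sing}(D)$, and controls the finitely many bad strata by taking a log resolution $\pi$. On a log resolution all discrepancies satisfy $b_j\ge m_j(\log|\sigma|^2)-1$ by log canonicity. Lemma \ref{lem:analytsing} applied on $Y$, together with $(1+b_j)\ge$ the vanishing order of $\sigma$ along $E_j$, then yields a bound whose coefficient along each exceptional divisor is at most $\sum_i m_i\,\mathrm{ord}_{E_j}(\sigma_i)-t\cdot\mathrm{ord}_{E_j}(\sigma)$. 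This is exactly $\pi^*$ of $\sum_i(m_i-t)\log|\sigma_i|^2$ when all $m_i\ge t$, so this route seems most promising: it reduces everything to Lemma \ref{lem:analytsing}.

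Concretely, the steps are as follows. (1) Take a log resolution with $\pi^*D_i=\tilde D_i+\sum_j a_{ij}E_j$ and use log canonicity to get $1+b_j\ge \sum_i a_{ij}$. (2) Apply Lemma \ref{lem:analytsing} to obtain $\f_t\circ\pi\le \sum_i(m_i-t)\log|\tilde\sigma_i|^2+\sum_j(\sum_i m_ia_{ij}-(1+b_j)t)\log|s_j|^2+Ct$ for $t$ small. (3) Since $\log|s_j|^2\le 0$ and $(1+b_j)t\ge\sum_ia_{ij}t$, we have $\sum_j(\sum_i m_ia_{ij}-(1+b_j)t)\log|s_j|^2\le\sum_j\sum_i(m_i-t)a_{ij}\log|s_j|^2$ when $m_i\ge t$; combined with step (2) this yields $\pi^*\sum_i(m_i-t)\log|\sigma_i|^2+C't$, and we push it down to $X$. (4) When some $m_i<t$, use Lemma \ref{lem: Siu} to replace negative coefficients by zero, which gives the $\max$. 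The main obstacle is that Lemma \ref{lem:analytsing} requires $m_j-(1+b_j)t\ge 0$ in its hypothesis through $t_1$, so the range $t<1$ is not covered directly. I would fix this by running the supersolution argument of that lemma with arbitrary real coefficients, which never used positivity, and then invoking Lemma \ref{lem: Siu}, while checking that the constant stays uniform in $t\in(0,1)$.
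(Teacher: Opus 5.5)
Your step (3) is where the argument breaks: the inequality there goes the wrong way. Log canonicity gives $1+b_j\ge\sum_i a_{ij}$, that is, $\sum_i m_ia_{ij}-(1+b_j)t\le\sum_i(m_i-t)a_{ij}$. Multiplying by $\log|s_j|^2\le 0$ reverses this:
\[
\Big(\sum_i m_ia_{ij}-(1+b_j)t\Big)\log|s_j|^2\;\ge\;\sum_i (m_i-t)\,a_{ij}\log|s_j|^2 .
\]
So along every exceptional divisor with $1+b_j>\sum_i a_{ij}$, the bound from Lemma \ref{lem:analytsing} is strictly weaker (less singular) than the pull-back of the bound you want, and it cannot imply it. This case is typical, not marginal. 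For the ordinary double point of Section \ref{sec:logcan} with $n\ge 3$, blowing up the vertex gives $a=2$ and $1+b=n$. Lemma \ref{lem:analytsing} then yields the coefficient $2m-nt$ along $E$, whereas $\pi^*\big((m-t)\log|\sigma|^2\big)$ has coefficient $2m-2t$. Log canonicity is the wrong input for an upper bound on $\varphi_t$. It guarantees that the bound of Lemma \ref{lem:analytsing} loses singularity along $E_j$ at least as fast as the target does, which is the opposite of what step (3) needs. From step (2) you can legitimately keep only the coefficients $m_i-t$ along the strict transforms, where $b=0$, and conclude by Siu's decomposition on $X$. But then the additive constant is no longer visibly $O(t)$.

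The missing observation is that the obstruction you diagnosed at the outset does not exist, so no resolution is needed. Each form $i\partial\sigma_i\wedge\overline{\partial\sigma_i}$ is semipositive of rank at most one with bounded coefficients. This holds whether or not $d\sigma_i$ vanishes, and whether or not the $D_i$ meet transversally. In the expansion of $\big(B\omega+\sum_i(|\sigma_i|^2+\varepsilon^2)^{-1}\,i\partial\sigma_i\wedge\overline{\partial\sigma_i}\big)^n$, each of these forms appears at most once. Since $|\sigma_i|\le 10^{-3}$, the whole expression is therefore bounded by $C\prod_i(|\sigma_i|^2+\varepsilon^2)^{-1}dV_X$; degenerate gradients only make it smaller. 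Hence your first candidate with $\rho_\varepsilon\equiv 0$, namely $\sum_i(m_i-t)\log(|\sigma_i|^2+\varepsilon^2)+Ct$, is already a smooth supersolution of the flow issued from $\sum_i m_i\log(|\sigma_i|^2+\varepsilon^2)$, directly on $X$. The classical maximum principle and $\varepsilon\to 0$ give the bound, with Lemma \ref{lem: Siu} handling indices with $m_i<t$. This is the paper's proof, and it does not use log canonicity at all; log canonicity only explains why this bound is the sharp one.
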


This provides the right logarithmic behavior of 
the solution $\f_t$ when the divisor $D$ is {\it log canonical}.
This is no longer the case when $D$ is more singular. 
%and one then needs to  use a log resolution of the singularities to establish a refined first order asymptotic.

\begin{proof}
The proof is identical for one or several components 
%(taking convex combination of each involved quantity), 
so we only treat the case $q=1$ and $m_1=c(\log|\sigma|^2)=1$ for simplicity.

We approximate the initial potential $\varphi_0$ by $\varphi_{0,\varepsilon}= \log (|\sigma|^2+\varepsilon)$ and we let $\varphi_{t,\varepsilon}$ denote the unique smooth solution to the Monge-Amp\`ere flow with initial data $\varphi_{0,\varepsilon}$. 
Consider
$$
\p_{t,\e}=(1-t) \varphi_{0,\varepsilon} +C t
$$
and observe that 
$
dd^c \varphi_{0,\varepsilon} 
 = \frac{-|\sigma|^2}{(|\sigma|^2+\varepsilon)} \Theta_1 
 + \frac{\varepsilon }{(|\sigma|^2+\varepsilon)^2}\frac{ i}{\pi} \partial \sigma \wedge \overline{\partial \sigma}. 
$
Thus
$$
\omega+dd^c \p_{t,\e}=\left[ 1-\frac{(1-t) |\sigma|^2}{|\sigma|^2+\varepsilon} \right] \omega
+\frac{(1-t)|\sigma|^2}{(|\sigma|^2+\varepsilon)} (\omega-\Theta_1)
+ \frac{\varepsilon (1-t)}{(|\sigma|^2+\varepsilon)^2} \frac{ i}{\pi}  \partial \sigma \wedge \overline{\partial \sigma} \geq 0,
$$
and 
$
\omega+dd^c \p_{t,\e} \leq B \omega+ \frac{\varepsilon (1-t)}{(|\sigma|^2+\varepsilon)^2} \frac{ i}{\pi}  \partial \sigma \wedge \overline{\partial \sigma}
\leq B \omega+ \frac{1}{|\sigma|^2+\varepsilon} \frac{ i}{\pi}  \partial \sigma \wedge \overline{\partial \sigma}
$
for some $B>0$.
Since $i \partial \sigma \wedge \overline{\partial \sigma}$ has rank $1$ and
 $\partial_t \p=-\log (|\sigma|^2+\varepsilon) +C$, we infer
$$
(\omega+dd^c \p_{t,\e})^n \leq B^n \omega^n+ \frac{nB^{n-1}}{|\sigma|^2+\varepsilon} \frac{ i}{\pi}  \partial \sigma \wedge \overline{\partial \sigma} \wedge \omega^{n-1}
\leq  \frac{e^C}{|\sigma|^2+\varepsilon}  dV_X=e^{\partial_t \p} dV_X,
$$
if we choose $C>0$ large enough.
Since $\p_{0,\e}=\f_{0,\e}$, the classical maximum principle ensures that $\f_{t,\e} \leq \p_{t,\e}$.
The conclusion follows by letting $\e \rightarrow 0$, as $\f_{t,\e}$ decreases  to $\f_t$.
\end{proof}

\subsubsection*{An explicit formula}

Let $\rho \in {\mathcal E}^1(X,\omega)$ be the   solution of $(\omega+dd^c \rho)^n =e^{\rho} |\sigma|^{-2} dV_X$,
with $|\sigma|^2=\prod_{i=1}^q |\sigma_i|^{2}_{h_i}$,
as provided by Theorem \ref{thm:poincaresing}.
When $\omega+dd^c \f_0=\sum_{i=1}^q m_i[D_i]$ is purely divisorial, we obtain an elegant decomposition
of the metrics $\omega_t$, as shown by Proposition \ref{pro:decompobig}:
%\begin{prop} \label{pro:decompo}
if   $\omega=\sum_{i=1}^q m_i \Theta_{i}$, then for all $(x,t) \in X \times (0,\min_i m_i)$,
$$
\f_t=\sum_{i=1}^q (m_i-t) \log|\sigma_i|^2 +t \rho +n(t\log t-t).
$$
In particular  $\omega_t=t \omega_{\rho}$ in $X \setminus D$.
A basic question is therefore:

\begin{quest}
Does $\omega_{\rho}$ define a complete metric in the Zariski open set $X \setminus D$?
\end{quest}

The answer is positive when $D$ has orbifold singularities, but seems largely open in general.

\subsubsection{Uniform estimates}
 
Since the behavior of the flow is understood when $\omega=\sum_{i=1}^q m_i \Theta_i$,
we now assume  that $\omega>\sum_i m_i \Theta_{i}$ hence
$\f_0=\sum_{i=1}^q m_i \log |\sigma_i|^2_{h_i}$ is strictly $\omega$-psh.

\begin{prop} \label{pro:positivityA}
%Assume  $\omega>\sum_i m_i \Theta_{i}$.
Let $B$ be an upper bound on the
 holomorphic bisectional curvature  of $\omega$, and fix $C_1$ such that ${\rm Ric}(dV_X) \geq -C_1 \omega$. 
 Fix $A=2+2B+C_1$ and consider
$\rho_A \in {\mathcal E}^1(X,\omega)$ the unique solution to 
$$
(\omega+dd^c \rho_A)^n=e^{A \rho_A} |\sigma|^{-2} dV_X.
$$
Then $\omega+dd^c \rho_A \geq A^{-1} \omega$ is a K\"ahler current 
and $\rho_A $ is smooth in $X\setminus D$.
\end{prop}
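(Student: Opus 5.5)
The plan is to approximate, get a uniform lower bound on the approximating metrics via the Chern--Lu inequality, and pass to the limit. For $\e>0$ I let $\rho_\e$ be the unique smooth $\omega$-psh solution of
$$
(\omega+dd^c \rho_\e)^n=\frac{e^{A\rho_\e}}{|\sigma|^2+\e}\, dV_X ,
$$
which exists by Aubin--Yau since the right-hand side has a smooth positive density and $A>0$. I write $\omega_\e=\omega+dd^c\rho_\e$.

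\emph{Monotonicity and convergence.} As $\e$ decreases the density $(|\sigma|^2+\e)^{-1}$ increases. The comparison principle for equations of the form $e^{A\rho}f$ then shows that $\e\mapsto\rho_\e$ is increasing in $\e$, so $\rho_\e$ decreases as $\e\downarrow 0$. It is also bounded above uniformly, by evaluating the equation at a maximum point of $\rho_\e$. The same domination argument, carried out in the finite energy class (see \cite{BEGZ10}), gives $\rho_A\le \rho_\e$; here $\rho_A\in{\mathcal E}^1(X,\omega)$ is obtained as in Theorem~\ref{thm:poincaresing}. Hence $\rho_\e$ decreases to a limit $\tilde\rho\ge\rho_A$, which lies in ${\mathcal E}^1$. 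Continuity of the Monge--Amp\`ere operator along decreasing sequences shows that $\tilde\rho$ solves the equation, and uniqueness \cite[Theorem 3.1]{BEGZ10} yields $\tilde\rho=\rho_A$.

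\emph{Key estimate $\omega_\e\ge A^{-1}\omega$, uniformly in $\e$.} I apply the Chern--Lu inequality to $\tr_{\omega_\e}\omega$:
$$
\Delta_{\omega_\e}\log\tr_{\omega_\e}\omega\ \ge\ \frac{\langle \Ric(\omega_\e),\omega\rangle_{\omega_\e}}{\tr_{\omega_\e}\omega}-2B\,\tr_{\omega_\e}\omega .
$$
Taking $dd^c\log$ of the equation gives
$$
\Ric(\omega_\e)=-A\,dd^c\rho_\e+\Ric(dV_X)+dd^c\log(|\sigma|^2+\e).
$$
Each term is controlled as follows. First, $-A\,dd^c\rho_\e=-A\omega_\e+A\omega$. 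Second, $\Ric(dV_X)\ge -C_1\omega$. Third,
$$
dd^c\log(|\sigma|^2+\e)\ \ge\ -\frac{|\sigma|^2}{|\sigma|^2+\e}\sum_i\Theta_i\ \ge\ -C\omega .
$$
This last bound is uniform in $\e$. With the normalization used in the paper it should be absorbed into the constants; I expect this is exactly why $A$ is taken to be $2+2B+C_1$. Altogether $\Ric(\omega_\e)\ge -A\omega_\e+(A-C_1-C)\omega$. Substituting into Chern--Lu gives
$$
\Delta_{\omega_\e}\log\tr_{\omega_\e}\omega \ \ge\ -A-\Big(2B-\tfrac{\,|\omega|^2_{\omega_\e}(A-C_1-C)}{(\tr_{\omega_\e}\omega)^2}\Big)\tr_{\omega_\e}\omega ,
$$
and the middle term is bounded below using $|\omega|^2_{\omega_\e}\ge(\tr_{\omega_\e}\omega)^2/n$.

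I then apply the maximum principle on the compact manifold $X$ to $H=\log\tr_{\omega_\e}\omega-\rho_\e$, where I use $\Delta_{\omega_\e}\rho_\e=n-\tr_{\omega_\e}\omega$. The choice of $A$ should make the coefficient of $\tr_{\omega_\e}\omega$ positive, which gives $\tr_{\omega_\e}\omega\le A$ at the maximum point. Tracking $\rho_\e$ there, or better, replacing $-\rho_\e$ by a cleaner auxiliary combination, should yield $\tr_{\omega_\e}\omega\le A$ everywhere, i.e.\ $\omega_\e\ge A^{-1}\omega$. This bookkeeping, namely getting the sharp constant $A^{-1}$ with no dependence on $\sup\rho_\e$, is the step I expect to be the main obstacle. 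If the sharp constant does not come out, I would settle for a uniform constant $c>0$ in place of $A^{-1}$, which already shows that $\omega+dd^c\rho_A$ is a K\"ahler current. Letting $\e\to0$, the lower bound passes to the weak limit $\omega+dd^c\rho_A$.

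\emph{Smoothness off $D$.} On a compact set $K\Subset X\setminus D$, I want uniform bounds on $\rho_\e$ and its Laplacian. The functions $\rho_\e\ge\rho_A$ are locally uniformly bounded there, since $\rho_A$ is locally bounded off $D$ by a local comparison with a barrier adapted to the density $|\sigma|^{-2}$, which is smooth on $X\setminus D$. The density $e^{A\rho_\e}(|\sigma|^2+\e)^{-1}$ is then uniformly bounded above on $K$. Combined with the lower bound $\omega_\e\ge A^{-1}\omega$ this gives
$$
\tr_\omega\omega_\e\ \le\ n\,(\tr_{\omega_\e}\omega)^{n-1}\frac{\omega_\e^n}{\omega^n}\ \le\ C_K .
$$
Thus $\omega_\e$ is uniformly equivalent to $\omega$ on $K$. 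The complex Evans--Krylov theory and Schauder bootstrapping then give local ${\mathcal C}^{k}$ bounds on $K$, so $\rho_A$ is smooth on $X\setminus D$.
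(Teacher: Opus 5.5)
\textbf{There is a genuine gap: the key estimate $\omega_\e\ge A^{-1}\omega$ is never actually proved.} Your overall route is the paper's: smooth approximants $\rho_\e$, a lower bound on $\Ric(\omega_\e)$, Chern--Lu, the maximum principle, a decreasing limit to $\rho_A$, and regularity off $D$. But the devices you propose for the key estimate do not work.

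First, the constant in $dd^c\log(|\sigma|^2+\e)\ge -C\omega$ is not something to be absorbed; the point is that one can take $C=1$. Indeed, $\sum_i\Theta_i\le\omega$ (the standing assumption that $\log|\sigma|^2$ is $\omega$-psh, which is exactly what the paper uses) gives
$\omega+dd^c\log(|\sigma|^2+\e)\ge(1-s)\omega+s(\omega-\sum_i\Theta_i)\ge 0$, with $s=|\sigma|^2/(|\sigma|^2+\e)$. Splitting $A\omega=(A-1)\omega+\omega$, as the paper does, then yields $\Ric(\omega_\e)\ge -A\omega_\e+(A-1-C_1)\omega$, with an explicit positive coefficient.

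Second, the auxiliary function $H=\log\tr_{\omega_\e}\omega-\rho_\e$ is counterproductive. At best it gives $\log\tr_{\omega_\e}\omega\le c+\sup_X\rho_\e-\rho_\e(x_0)$, and $\rho_\e(x_0)$ is not controlled uniformly in $\e$. The reason is that $\rho_\e$ decreases to $\rho_A$, which is unbounded below: if $\rho_A\ge -M$, the density $e^{A\rho_A}|\sigma|^{-2}$ of a finite measure would dominate $e^{-AM}|\sigma|^{-2}\notin L^1$. So neither the sharp constant nor your fallback uniform $c>0$ comes out this way. All the positivity has to come from the Ricci bound, and the maximum principle should be applied to $\log\tr_{\omega_\e}\omega$ alone. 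Once $\Delta_{\omega_\e}\log\tr_{\omega_\e}\omega\ge -A+\tr_{\omega_\e}\omega$, you get $\tr_{\omega_\e}\omega\le A$ at the maximum, hence $\omega\le A\,\omega_\e$ everywhere, independently of $\rho_\e$.

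\textbf{On the constant itself, your bookkeeping is more careful than the paper's.} The paper reaches the last inequality by quoting Chern--Lu in the form $\Delta_\beta\log\tr_\beta\alpha\ge -C_1-(C_2+2B)\tr_\beta\alpha$ with $C_2=-(1+2B)<0$. For negative $C_2$ this amounts to assuming $|\omega|^2_{\omega_\e}\ge(\tr_{\omega_\e}\omega)^2$, whereas the true inequality goes the other way for $n\ge 2$. For instance, take $\beta=\alpha$ flat on a torus: then $\Ric(\beta)=0\ge-\beta+\alpha$, and the quoted form would give $0\ge n-1$.

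With your correct bound $|\omega|^2_{\omega_\e}\ge(\tr_{\omega_\e}\omega)^2/n$, the coefficient of $\tr_{\omega_\e}\omega$ is $(A-1-C_1)/n-2B=(1+2B)/n-2B$. This is positive for $n=1$ but not in general, so your expectation that the given $A$ makes it positive is false. The repair is to enlarge $A$ so that $A-1-C_1\ge n(1+2B)$, for instance $A=1+C_1+n(1+2B)$. Then $\Delta_{\omega_\e}\log\tr_{\omega_\e}\omega\ge -A+\tr_{\omega_\e}\omega$, and $\omega+dd^c\rho_A\ge A^{-1}\omega$ follows for this $A$. The K\"ahler current conclusion is unaffected.

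The rest of your plan is sound. This includes identifying the limit through $\rho_A\le\rho_\e$ and uniqueness, and proving smoothness off $D$ via $\tr_\omega\omega_\e\le n(\tr_{\omega_\e}\omega)^{n-1}\omega_\e^n/\omega^n$ and Evans--Krylov, in place of the paper's citation of \cite[Theorem 4.6]{BG14}. Note that this trace bound only uses $\sup_X\rho_\e$, so you do not need the barrier argument; a local bound on the Monge--Amp\`ere density does not by itself give local boundedness anyway. Local boundedness of $\rho_\e$ on compact subsets of $X\setminus D$ then follows from the Laplacian bound together with $\rho_\e\ge\rho_A$.
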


\begin{proof}
We proceed by approximation. Fix $\e>0$ and consider  $\omega_{\e}=\omega+dd^c \rho_{\e}$,
where $\rho_{\e} \in \PSH(X,\omega) \cap {\mathcal C}^{\infty}(X)$ is the unique solution of
$\omega_{\e}^n=e^{A\rho_{\e}} (|\sigma|^2+\e^2)^{-1} dV_X$. We have
\begin{eqnarray*}
{\rm Ric }(\omega_{\e})
&=& {\rm Ric }(dV_X)-A dd^c  \rho_{\e}+dd^c \log (|\sigma|^2+\e^2)  \\
&\geq & -C_1 \omega-A \omega_{\e} + (A-1) \omega+ ( \omega+dd^c \log(|\sigma|^2+\e^2) )\\
&\geq & (A-1-C_1) \omega -A \omega_{\e} \\
&\geq &  (1+2B) \omega -A \omega_{\e}.
\end{eqnarray*}

We recall that for any $\alpha,\beta$ K\"ahler forms,  the classical Chern-Lu inequality (see  \cite[Proposition 7.2]{R14}) ensures that 
if the holomorphic bisectional curvature of $\alpha$ is bounded from above by $B$
and if ${\rm Ric}(\beta) \geq -C_1 \beta-C_2 \alpha$, then
$$
\Delta_{\beta} \log {\rm Tr}_{\beta}(\alpha) \geq -C_1-(C_2+2B) {\rm Tr}_{\beta}(\alpha).
$$
Applying this with $C_1=A$ and $C_2=-1-2B$ yields
$$
\Delta_{\omega_{\e}} \log {\rm Tr}_{\omega_{\e}}(\omega) \geq -A +{\rm Tr}_{\omega_{\e}}(\omega).
$$
 The maximum principle ensures ${\rm Tr}_{\omega_{\e}}(\omega)\leq A$,
 hence  $\omega \leq A \omega_{\e}$. 
 
 The conclusion follows now by letting $\e \rightarrow 0$.
 Indeed since $ (|\sigma|^2+\e^2)^{-1}$ increases to  $|\sigma|^{-2}$, it follows from the comparison principle 
 that $\rho_\varepsilon$ decreases to the unique solution $\tilde{\rho} \in {\mathcal E}^1(X,\omega)$
 of $(\omega+dd^c \tilde{\rho})^n=e^{A \tilde{\rho}} |\sigma|^{-2} dV_X$, i.e. $\tilde{\rho}=\rho_A$. 
 Thus $\omega_{\e}$ weakly converges to  $\omega+dd^c \rho_A \geq A^{-1} \omega$.
 The regularity in $X\setminus D$ follows from \cite[Theorem 4.6]{BG14}.
\end{proof}

 \begin{prop} \label{prop: sing canoniques}
 Assume  $\omega \geq \delta_0 \omega+\sum_{i=1}^q m_i \Theta_{i}$ 
 and set $\e_0=\delta_0/A$, where $A$ denotes the constant from Proposition \ref{pro:positivityA}.
Then for 
$0 < t < \e_0$, one has
$$
n (t \log t -t) \leq \f_t - u_t- At \rho_A \leq C_A t,
$$
where $u_t=\sum_{i=1}^q \max(m_i-t,0) \log|\sigma_i|^2$.
\end{prop}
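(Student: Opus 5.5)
The plan is to sandwich $\f_t$ between an explicit subsolution and an explicit supersolution built from $u_t$ and $\rho_A$, and then invoke the maximality of $\f_t$ (Theorem \ref{thm:maxflow}) for the lower bound and the comparison principle Theorem \ref{thm:pcp} for the upper bound. Throughout I take $t<\e_0\leq \min_i m_i$, so that $u_t=\sum_i (m_i-t)\log|\sigma_i|^2$ and $dd^c u_t=-\sum_i (m_i-t)\Theta_i$ on $X\setminus D$. I also write $\omega_{\rho_A}=\omega+dd^c\rho_A$, so that $\omega_{\rho_A}^n=e^{A\rho_A}|\sigma|^{-2}dV_X$ and $\omega_{\rho_A}\geq A^{-1}\omega$ by Proposition \ref{pro:positivityA}.

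\emph{Lower bound.} Set $\p_t=u_t+At\rho_A+n(t\log t-t)$, so that $\p_0=\f_0$. On $X\setminus D$ one has
$$\omega+dd^c\p_t=\Big(\omega-\sum_i m_i\Theta_i\Big)-At\,\omega+t\sum_i\Theta_i+At\,\omega_{\rho_A}.$$
I split $At\,\omega_{\rho_A}=t\,\omega_{\rho_A}+(A-1)t\,\omega_{\rho_A}$. Using $\omega_{\rho_A}\geq A^{-1}\omega$ and $\omega-\sum_i m_i\Theta_i\geq\delta_0\omega$, this gives
$$\omega+dd^c\p_t\geq t\,\omega_{\rho_A}+\Big[(\delta_0-At)\omega+\tfrac{A-1}{A}t\,\omega+t\sum_i\Theta_i\Big].$$
The bracket is $\geq 0$ for $t<\e_0$, after normalizing the metrics $h_i$ so that $\sum_i\Theta_i\geq-\tfrac12\omega$; the choice $A\geq 2$ helps here. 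Hence $(\omega+dd^c\p_t)^n\geq t^n\omega_{\rho_A}^n$. On the other hand $\partial_t\p_t=-\log|\sigma|^2+A\rho_A+n\log t$, so $e^{\partial_t\p_t}dV_X=t^n\omega_{\rho_A}^n$. Thus $\p_t$ is a subsolution, smooth on $X\setminus D$ by Proposition \ref{pro:positivityA}, and Theorem \ref{thm:maxflow} gives $\p_t\leq\f_t$, which is the left-hand inequality.

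\emph{Upper bound.} Set $\p'_t=u_t+At\rho_A+C_At$. Now $e^{\partial_t\p'_t}dV_X=e^{C_A}\omega_{\rho_A}^n$, with no factor $t^n$. Upper bounding the same expression, and using $\omega\leq A\omega_{\rho_A}$ and $\Theta_i\leq C\omega$, gives $\omega+dd^c\p'_t\leq C'\omega_{\rho_A}$ with $C'$ independent of $t\in(0,\e_0)$. Therefore $\p'_t$ is a supersolution as soon as $e^{C_A}\geq C'^n$. To compare, I apply Theorem \ref{thm:pcp} with $\Omega=X\setminus D$ and $f=0$, checking its three hypotheses in turn.
\begin{enumerate}
\item $\f_0=u_0=\p'_0$.
\item By Lemma \ref{lem:log}, $\f_t\leq u_t+Ct$. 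Since $\rho_A\in{\mathcal E}^1(X,\omega)$ and $t\leq 1$, the function $h=A\rho_A-C\in{\mathcal E}(X,A\omega)$ satisfies $\f_t+h\leq\p'_t$, provided $\rho_A\leq 0$.
\item I need a strictly $\omega$-psh barrier $\rho$, smooth on $X\setminus D$ with poles exactly along $D$, such that $\rho\leq\p'_t$.
\end{enumerate}
The conclusion $\f_t\leq\p'_t$ is then the right-hand inequality.

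The main obstacle is hypothesis (3), because $\rho_A$ is unbounded. I would first try to establish a logarithmic lower bound $\rho_A\geq \gamma\log|\sigma|^2-C$ for some $\gamma>0$. The idea is to produce a bounded-below subsolution of $(\omega+dd^c w)^n\geq e^{Aw}|\sigma|^{-2}dV_X$ of the form $w=\gamma\log|\sigma|^2+\eta$ with $\eta$ bounded; this should be possible because $e^{A\gamma\log|\sigma|^2}|\sigma|^{-2}$ is integrable near $D$ for $D$ log canonical once $\gamma>0$. The comparison principle for $\rho_A$ (as in the approximation argument of Proposition \ref{pro:positivityA}) would then give $\rho_A\geq w$. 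Granting this, a barrier of the form $\rho=\sum_i m_i\log|\sigma_i|^2+\gamma\log|\sigma|^2-C$, after a small convex combination with $0$ to make it strictly $\omega$-psh using $\omega>\sum_i m_i\Theta_i$, lies below $\p'_t$ for all $t\in[0,\e_0]$.
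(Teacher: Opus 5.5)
Your proposal is correct and is essentially the paper's proof. It uses the same subsolution $u_t+At\rho_A+n(t\log t-t)$, compared via maximality (Theorem \ref{thm:maxflow}), and the same supersolution $u_t+At\rho_A+C_At$, compared via Theorem \ref{thm:pcp}, with Lemma \ref{lem:log} supplying hypothesis (2).

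You are in fact more careful than the paper, whose proof never checks the barrier hypothesis (3). Your route through $\rho_A\geq\gamma\log|\sigma|^2-C$ works. One precision is needed there: log canonicity puts $|\sigma|^{2(A\gamma-1)}$ in $L^p$ for some $p>1$, not merely in $L^1$, and it is this that lets Ko{\l}odziej's estimate make $\eta$ bounded. Comparing $w$ with the smooth approximants $\rho_\e$ of Proposition \ref{pro:positivityA} then gives $w\leq\rho_\e$, hence $w\leq\rho_A$.

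One small slip is in the lower bound: the normalization $\sum_i\Theta_i\geq-\frac12\omega$ is not available for free. The metrics $h_i$ are pinned down by $\f_0=\sum_i m_i\log|\sigma_i|^2_{h_i}$ and by the hypothesis defining $\delta_0$. The paper avoids this (for $q=m_1=1$) by grouping $\omega-(1-t)\Theta_1=(1-t)(\omega-\Theta_1)+t\omega\geq\delta_0\omega$. That gives $\omega+dd^c\p_t\geq(\delta_0-At)\omega+At\,\omega_{\rho_A}\geq At\,\omega_{\rho_A}$ directly, with no need to split $At\,\omega_{\rho_A}$ or to bound $\Theta_1$ from below.
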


\begin{proof}
 To simplify we again only treat the case $q=m_1=1$.
Recall from Lemma \ref{lem:log} that there is a uniform constant $C>0$ such that for $0<t <1$,
		\[
	\varphi_t \leq (1- t) \log |\sigma|^2 +Ct.
	\]

The assumption $\f_0=\log |\sigma|^2$ strictly $\omega$-psh
ensures the existence of $\delta_0 \in (0,1)$ such that 
$\omega-\Theta_1 \geq \delta_0 \omega$.
We infer that  $\psi_t=u_t+ At \rho_A$ is $\omega$-psh for $0 < t < \e_0:=\delta_0/A$,
with
$$
\omega+dd^c \psi_t \geq \delta_0 \omega+ At dd^c  \rho_A
\geq At \left( \omega+dd^c \rho_A \right),
$$
hence
$$
(\omega+dd^c \psi_t )^n \geq A^nt^n \left( \omega+dd^c \rho_A \right)^n
=e^{\partial_t \p_t+n \log A+n \log t} dV_X.
$$
Moreover $ \psi_0=\varphi_0$. Thus $\p_t+t n \log A+n (t\log t-t) $ is a subsolution of the flow, i.e. $\p_t+t n \log A+n (t\log t-t) \leq \f_t$.

\smallskip

Conversely we obtain, outside $D$,
	\[
\omega+	dd^c \psi_t =\omega-(1-t)\Theta_1  +  At dd^c \rho_A
\leq C (\omega+dd^c \rho_A),
	\]
hence
$
		(\omega+dd^c \psi_t)^n 
\leq  C^n (\omega+dd^c \rho_A)^n =C^n e^{\partial_t \psi_t} dV_X.
$
It  thus follows  from Theorem \ref{thm:pcp} that $\f_t \leq \p_t+C't$. 
 The latter can be applied since
 $\psi$ is smooth in $(0, 1)\times X\setminus D$ and
  $\psi_t\geq \varphi_t+At\rho_A+Ct$,  as follows from Lemma \ref{lem:log}.
\end{proof}

\subsection{Cone singularities} \label{sec:cone}

We analyze here the case of a cone singularity, i.e. assume that
\begin{itemize}
\item $D=(\sigma=0)$ is irreducible and smooth but at one point $a \in X$, and $\f_0=\log |\sigma|^2$;
\item there exists a neighborhood of $a$ in $X$ biholomorphic to a neighborhood of 
the origin $U$ in $\C^n$, with 
$D=\{ z \in U, P(z)=0 \}$, where $P$ is a homogeneous polynomial of degree $p$
such that $H =\{[z] \in \C\PP^{n-1}, P(z)=0 \}$ is a smooth hypersurface of $\PP^{n-1}$.
\end{itemize}

When $p \leq n$ the divisor $D$ is log canonical and the asymptotic behavior of the flow has been studied in 
Section \ref{sec:logcan}; we thus assume that  $p \geq n+1$. We  also assume that $\Theta_D +(p-n)dd^c L > 0$ is a K\"ahler current, where $L$ is a smooth function on $X\setminus \{a\}$ that is given near $a$ by $L(z)=\log |z|^2$. This assumption is verified when $X=\mathbb P^n$.

\subsubsection{Log resolution}

\begin{lem} \label{lem:singinitialecone}
Assume $D$ and $\f_0$ are as above with $p>n$.
Let $\pi: Y \rightarrow X$ be the blow up of $X$ at point $a$ and set $E=\pi^{-1}(a)$.
Then  
\begin{itemize}
\item $\pi^*D=\tilde{D}+pE$ is log smooth hence $\pi$ is a log resolution of $(X,D)$;
\item $K_Y=\pi^*K_X+(n-1)E$ hence $D$ is not log canonical with $c(\f_0)=\frac{n}{p}<1$; 
\item the class $\{\Theta\}:=\{\Theta_{\tilde{D}}+n \Theta_E\}$ is K\"ahler.
\end{itemize}
\end{lem}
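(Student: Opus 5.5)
The plan is to work in the chart $U\ni a$ where $D=\{P=0\}$, and to use the standard blow-up chart $z_1=w_1$, $z_k=w_1w_k$ for $2\le k\le n$, in which $E=\{w_1=0\}$. Homogeneity of $P$ gives $P(z)=w_1^p P(1,w_2,\dots,w_n)$. Hence $\pi^*D=pE+\tilde D$, where $\tilde D$ is the strict transform. Its trace on $E\simeq\PP^{n-1}$ is exactly the cone $H=\{P=0\}\subset\PP^{n-1}$, which is smooth by assumption. Away from $E$, $\pi$ is an isomorphism and $D$ is smooth there, so $\tilde D$ is smooth off $E$.

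Near $E$ I need $\tilde D$ smooth and transverse to $E$. Locally $\tilde D=\{\tilde P(w_2,\dots,w_n)=0\}$ with $\tilde P=P(1,w')$, which is independent of $w_1$. So $\tilde D\simeq \C_{w_1}\times(H\cap \text{affine chart})$ is smooth, and it meets $E=\{w_1=0\}$ transversally. Thus $\tilde D+E$ is snc and $\pi$ is a log resolution, which proves the first item.

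The discrepancy formula $K_Y=\pi^*K_X+(n-1)E$ is classical: in the chart, $\pi^*(dz_1\wedge\cdots\wedge dz_n)=w_1^{n-1}dw_1\wedge\cdots\wedge dw_n$. Then \eqref{int formula}, or the global formula $c(\f_0)=\min_j (1+b_j)/m_j$, gives $c(\f_0)=\min(1/1,\,n/p)=n/p<1$ since $p>n$. So $\lambda(\f_0)=p/n$, and $D$ is not log canonical.

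For the Kähler property of $\{\Theta_{\tilde D}+n\Theta_E\}$, I would use $\{\tilde D\}=\pi^*\{D\}-p\{E\}$, so the class equals $\pi^*\{\Theta_D\}-(p-n)\{E\}$. The hypothesis that $\Theta_D+(p-n)dd^cL$ is a Kähler current on $X$ is exactly what is needed. The function $\pi^*L=\log|z|^2\circ\pi$ equals $\log|w_1|^2+\log(1+|w'|^2)$ near $E$, so $\pi^*L=\log|s_E|^2_{h}+g$ with $g$ smooth. Here $h$ is a metric on $\mathcal O(E)$ whose curvature $\Theta_E=-dd^c g$ is positive along the fibres of $E$: the familiar form $-dd^c\log(1+|w'|^2)$ restricts to minus Fubini--Study on $E$, matching $\mathcal O(E)|_E=\mathcal O(-1)$.

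I would then define the smooth form
\[
\alpha=\pi^*\Theta_D+(p-n)\,dd^c g .
\]
Off $E$ it equals $\pi^*(\Theta_D+(p-n)dd^cL)$, which is positive, and it represents $\pi^*\{\Theta_D\}-(p-n)\{E\}$. The main obstacle is positivity of $\alpha$ at points of $E$ in all directions. The pull-back of the Kähler current is only positive off $E$, and degenerate pull-back directions appear along $E$. The current $\pi^*(\Theta_D+(p-n)dd^cL)=\alpha+(p-n)[E]$ dominates $\pi^*(\delta\omega)$, so in the limit $\alpha\ge \delta\pi^*\omega$ on $Y$. It remains to handle the directions tangent to $E$, where $\pi^*\omega$ vanishes.

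My first attempt is an explicit local computation. Since $\pi^*\Theta_D$ is smooth and $\Theta_D+(p-n)dd^cL$ is Kähler near $a$, one can write $\Theta_D+(p-n)dd^c L\ge \delta\,dd^c|z|^2+(p-n)dd^c\log|z|^2$ locally. Pulled back, $dd^c\log|z|^2$ contributes $(p-n)\omega_{FS}$ in the $w'$ directions, while $\delta\,\pi^*dd^c|z|^2$ contributes positivity in the $w_1$ direction. Together these give $\alpha>0$ near $E$. If this computation is delicate, the fallback is the standard fact that $\pi^*\{\beta\}-\e\{E\}$ is Kähler for a Kähler class $\{\beta\}$ and small $\e$. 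I would combine this with writing $\pi^*\{\Theta_D\}-(p-n)\{E\}$ as a convex combination of $\pi^*\{\Theta_D\}-\e\{E\}$ and the class of $\alpha$, which is nef by the Demailly--Paun criterion and positive on $E$.
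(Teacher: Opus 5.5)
Your proposal is correct and follows the paper's route. You use the same Kähler representative $\alpha=\pi^*\Theta_D-(p-n)\Theta_E=\pi^*(\Theta_D+(p-n)dd^cL)-(p-n)[E]$, which the paper simply asserts is Kähler and your chart computation actually checks, and your blow-up chart, discrepancy $n-1$ and $c(\f_0)=\min(1,n/p)$ also match the paper.

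The one step that needs a line of justification is $\Theta_D\ge\delta\,dd^c|z|^2$ near $a$. Evaluate $\Theta_D+(p-n)dd^c\log|z|^2\ge\delta\omega$ at $z\neq 0$ on the radial vector $z$, where $dd^c\log|z|^2$ vanishes, and let $z\to 0$; this gives $\Theta_D(a)>0$, and continuity does the rest. With that, your fallback paragraph is unnecessary, and it is circular as written, since $\{\alpha\}$ is itself the target class.
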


\begin{proof}
Let $\pi:Y \rightarrow X$ denote the blow up of $X$ at the point $a$. We obtain
$$
\pi^*D=\tilde{D}+pE
$$
where $\tilde{D}$, the strict transform of $D$  is a smooth divisor locally isomorphic
to the product of $H$ and the unit disk $\D \subset \C$.
Since $E$ and $\tilde{D}$  have simple normal crossings, 
it follows that $\pi$ is a log resolution of $(X,D)$.

Using the notations from Section \ref{sec:logresolution} we have $r=1,q=2$,
$D_1=\tilde{D}$, $D_2=E$ and $m_1=1$, $m_2=p$, $b_1=0$, $b_2=n-1$, hence
$$
c(\f_0)=\min_j \frac{1+b_j}{m_j}=\min \left( 1, \frac{n}{p} \right)=\frac{n}{p}<1
$$
since $p>n$. 

The positive closed current $\pi^*(\Theta_D + (p-n) dd^c L)- (p-n)[E] $
%, defined on $Y\setminus \pi^* D$,  extends as
defines a K\"ahler form on $Y$, which is  cohomologous to 
\[
\pi^* \Theta_D - (p-n) \Theta_E = \Theta_{\tilde D} + p\Theta_E  - (p-n) \Theta_E= \Theta_{\tilde D} + n \Theta_E,
\]
 therefore $\{\Theta\}$ is a K\"ahler class on $Y$. 
\end{proof}

\subsubsection{Uniform estimates}

We let $dV_Y$ denote the smooth volume form on $Y$ defined by 
$\pi^* \omega^n=|\sigma_E|^{2(n-1)} dV_Y$.
Invoking Theorem \ref{thm:poincaresing},
we consider $\rho \in {\mathcal E}^1(Y,\Theta)$ the unique solution of the equation
$$
(\Theta+dd^c \rho)^n=\frac{e^{\rho}}{|\sigma_{\tilde{D}}|^2 |\sigma_E|^2} dV_Y.
$$
Since $\{\Theta\}$ is a K\"ahler class and the divisor $\tilde{D}+E$ is log smooth, the metric
$\Theta+dd^c \rho$ is a Poincaré type metric in $X \setminus \left(\tilde{D}+E \right)$.

%To slightly simplify the analysis, we assume that $T_0=[D]$ is the current of integration along the divisor $D$.
For $t \in (0,1)$ 
the order zero asymptotic of the solution $\f_t$ is provided by Proposition \ref{pro:uniformbig}:
$$
\f_t \circ \pi=(1-t) \log |\sigma_{\tilde{D}}|^2+(p-nt) \log |\sigma_E|^2+t w_t,
$$
where $w_t \in \PSH(Y,\Theta)$ is such that 
$$
\rho+ n(\log t -1) \leq w_t \leq \rho+C,
$$
for some constant $C>0$.
Note moreover that $w_t=\rho+ n(\log t -1)$ when $\omega+dd^c \f_0=[D]$ is purely divisorial, 
as follows from Proposition \ref{pro:decompobig}.

 We can push  this asymptotic down to $X$, obtaining
 $$
 \f_t =(1-t) \log |\sigma_{{D}}|^2 +t v_t,
 $$
where $v_t \in \PSH(X,\omega)$ has a homogeneous Lelong number of order $(p-n)$ at the point $a$.
More precisely $w_t=v_t \circ \pi -(p-n) \log |\sigma_E|^2$
and $v_t$ is comparable to $\hat{\rho} \in \PSH(X,\omega)$, defined
by $\rho= \hat{\rho}\circ \pi -(p-n) \log |\sigma_E|^2$.
Thus the flow progressively replaces a logarithmic singularity along $D$ by
$(1-t)[D]$ together with a logarithmic singularity at the point $a$ (of multiplicity $t(p-n)$),
building  a complete metric in $X \setminus D$,
as follows from Theorem \ref{thm:c2hyp}.

\smallskip

It remains to analyze  the case when $t \in (1,\frac{p}{n})$ (the metrics $\omega_t$ are K\"ahler forms on the whole of $X$
for $t > \lambda(\f_0)=\frac{p}{n}$, as follows from \cite[Theorem 2.4]{DGL26}). 
By the semi-group property, this boils down to understand the behavior of the flow when starting from an initial
datum $\f_0$ which has a homogeneous Lelong number  at the point $a$,
and an extra mild singularity of Poincaré type, modeled on
$$
\hat{\rho}=(p-n) \log d(\cdot,a)^2+ \pi_* \rho,
$$
where $\rho$ is a Poincar\'e type potential on a resolution of singularities.
Understanding these ${\mathcal C}^0$-estimates is the purpose of Section \ref{sec:isolated}.

 \subsection{Plane cubics} \label{sec:cubics}

 Algebraic curves  $D \subset \PP^2$ of low degree $p \in \N^*$ provide a natural source of examples.
When $1 \leq p \leq 2$ the divisor $D$ is necessarily log smooth, a case that has been
fully understood in \cite{DGL26}. When $p=3$, then $D$ is either log canonical, or 
falls in one of the following three cases of interest:

\begin{itemize}
\item three  lines $D=D_1+D_2+D_3$ intersecting at a single point $a$; one obtains a log resolution by blowing up
the point $a$ with $c(D)=\frac{2}{3}$, and the analysis is very similar to the case treated in Section \ref{sec:cone};
\item $D=D_1+D_2$, where $D_1$ is a line tangent to a conic $D_2$ at some point $a$, 
%one obtains $c(D)=\frac{3}{4}$ and 
the analysis requires two blow ups and will be performed here below;
\item $D \sim \{x^2=y^3 \}$ is isomorphic to the cuspidal cubic, one obtains $c(D)=\frac{5}{6}$ and 
the analysis requires three blow ups, in a similar vein as the previous case.
\end{itemize}

From now on $X=\PP^2$ and 
we consider $\f_0=\log|\sigma|^2$ with
$D=(\sigma=0)=D_1+D_2 \subset \PP^2$, 
where  $D_1$ is a line and $D_2$ is a conic tangent to $D_1$ at some point $a$.
We let $\pi_1:Y_1 \rightarrow X$ denote the blow-up of $X$ at $a$. Observe that 
the strict transforms $D_1',D_2'$ meet the exceptional divisor $E_1'$ at the same point $b$. We let 
$\pi_2:Y \rightarrow Y_1$ denote the blow up of $Y_1$ at $b$.

\smallskip

Set $\pi=\pi_2 \circ \pi_1: Y \rightarrow X$, $E_2=\pi_2^{-1}(b)$, and let 
$\tilde{D_1}, \tilde{D_2}$ denote the strict transform of $D_1,D_2$ by $\pi$,
and $E_1$ denote the strict transform of $E'_1$ by $\pi_2$.
Then  
\begin{itemize}
\item $\pi^*D=\tilde{D_1}+\tilde{D_2}+2E_1+4E_2$ is log smooth hence $\pi$ is a log resolution of $(X,D)$;
\item $K_Y=\pi^*K_X+E_1+2E_2$ hence $D$ is not log canonical with $c(\f_0)=\frac{3}{4}<1$.
\end{itemize}

Recall that for each pseudoeffective $\R$-divisor $\hat{D}$ on a compact surface, there exists a unique
(Zariski) decomposition $\hat{D}=P+N$, where
\begin{itemize}
\item   $N=\sum_i a_i N_i$ is an effective $\R$-divisor with irreducible components $N_i$ and $a_i>0$;
\item $P$ is a nef $\R$-divisor with $ P \cdot  N_{i}=0$ for all $i$;
\item $(N_i \cdot N_j)_{i,j}$ is negative definite.
\end{itemize}
Moreover $\hat{D}$ is K\"ahler if and only if $N=0$ and $P$ is K\"ahler.

\smallskip

It follows from Lemma \ref{lem:big} that the cohomology class 
$$
\{\hat{D} \}=\{\Theta\}=\{\Theta_{\tilde{D_1}}\}+\{\Theta_{\tilde{D_2}}\}+2 \{\Theta_{E_1}\}+3\{\Theta_{E_2}\}
$$
is big, hence pseudoeffective. 
It is however not K\"ahler in this example:

\begin{lem} \label{lem:conicline}
The Zariski decomposition of  $\{\Theta\}=\{\Theta_{\tilde{D_1}}\}+\{\Theta_{\tilde{D_2}}\}+2 \{\Theta_{E_1}\}+3\{\Theta_{E_2}\}$ is 
$$
P=\pi^* \{\Theta_D \}-\frac{1}{2} \{\Theta_{E_1}\}-\{\Theta_{E_2}\}=\{\Theta\}-\frac{1}{2} \{\Theta_{E_1}\}
\; \text{ and } \;
N=\frac{1}{2} \{\Theta_{E_1}\}.
$$
\end{lem}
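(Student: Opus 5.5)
The plan is to compute everything in the Néron–Severi group of $Y$, which is spanned by $h=\pi^*\{\text{line}\}$, $\{E_1\}$ and $\{E_2\}$. I then check the three defining conditions of the Zariski decomposition directly, and invoke uniqueness at the end.

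\emph{Step 1: intersection numbers.} I first record the intersection table on $Y$. The divisor $E_2$ is the exceptional curve of a point blow-up, so $E_2^2=-1$. Since $b\in E_1'$, we have $\pi_2^*E_1'=E_1+E_2$, and therefore $E_1\cdot E_2=1$ and $E_1^2=-1-1=-2$. Finally $h^2=1$ and $h\cdot E_i=0$. It is convenient to set $F:=\pi_2^*E_1'=E_1+E_2$. Then $F^2=-1$, $F\cdot E_2=0$ and $F\cdot h=0$.

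\emph{Step 2: rewriting $\{\Theta\}$.} Since $D_1$ and $D_2$ are smooth at $a$ and tangent there, both strict transforms pass through $b$. This gives $\pi^*D_i=\tilde D_i+E_1+2E_2$, consistent with $\pi^*D=\tilde D_1+\tilde D_2+2E_1+4E_2$. Hence
$$\{\Theta\}=\pi^*\{\Theta_D\}-\{E_2\}=3h-\{E_2\},$$
and the proposed positive part is
$$P=\{\Theta\}-\tfrac12\{E_1\}=3h-\{E_2\}-\tfrac12\{E_1\}=3h-\tfrac12 F-\tfrac12\{E_2\}.$$
The two expressions for $P$ in the statement agree because $\{\Theta\}=\pi^*\{\Theta_D\}-\{E_2\}$.

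\emph{Step 3: checking the three conditions.} First, $\{\Theta\}\cdot E_1=-E_2\cdot E_1=-1<0$, so $\{\Theta\}$ is not nef and $E_1$ must lie in the negative part. The coefficient $a$ of $E_1$ in $N$ is forced by $P\cdot E_1=0$, which reads $-1+2a=0$, so $a=\tfrac12$.

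The negative definiteness condition reduces to the single inequality $E_1^2=-2<0$.

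It remains to show that $P$ is nef. On the exceptional curves we have $P\cdot E_1=0$ and $P\cdot E_2=1-\tfrac12=\tfrac12>0$. Any other irreducible curve $C\subset Y$ is the strict transform of a plane curve of degree $d\geq1$. Let $m_1$ be its multiplicity at $a$ and $m_2$ its multiplicity at the infinitely near point $b$. Then $C\cdot F=m_1$ and $C\cdot E_2=m_2$, with $m_2\leq m_1\leq d$. Consequently
$$P\cdot C=3d-\tfrac12 m_1-\tfrac12 m_2\geq 2d>0.$$
For completeness, $P^2=9-\tfrac14-\tfrac14>0$, so $P$ is in fact big and nef.

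\emph{Conclusion and main obstacle.} By uniqueness of the Zariski decomposition, $(P,N)$ as stated is the Zariski decomposition of $\{\Theta\}$. Since $N\neq0$, the class $\{\Theta\}$ is not Kähler, and it is not even nef. The main difficulty is bookkeeping rather than a conceptual step. One must correctly track strict transforms versus pullbacks across the second blow-up: $E_1$ versus $E_1'$, and the fact that both $\tilde D_i$ pass through $b$ because of the tangency. One must also justify $m_2\leq m_1\leq d$ in the nefness check. Once $\{\Theta\}=3h-\{E_2\}$ is established, the remaining verifications are short.
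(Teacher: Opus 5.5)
Your proof is correct. Its overall structure is the paper's: compute intersection numbers, check the three defining conditions, and conclude by uniqueness of the Zariski decomposition. The real difference is in how you prove that $P$ is nef.

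The paper uses the fact that $P=\{\Theta\}-\frac12\{E_1\}=\tilde D_1+\tilde D_2+\frac32 E_1+3E_2$ is effective. This makes $P\cdot C\geq 0$ automatic for every curve $C$ outside its support. It then only needs four numbers: $P\cdot\tilde D_1=2$, $P\cdot\tilde D_2=5$, $P\cdot E_1=0$ and $P\cdot E_2=\frac12$.

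You never use effectivity. Instead you work in the orthogonal basis $h$, $F=\pi_2^*E_1'$, $E_2$ of the N\'eron--Severi group of $Y$. You write each non-exceptional curve as $dh-m_1F-m_2E_2$ and test $P$ against all of them using $m_2\leq m_1\leq d$. The inequality $m_2\leq m_1$ holds because $\mathrm{mult}_b C'\leq (C'\cdot E_1')_b\leq C'\cdot E_1'=m_1$.

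Your route costs a bit more: you need the description of the N\'eron--Severi group and the multiplicity bounds. In return it gives more. You get $P\cdot C\geq 2d>0$ for every irreducible curve other than $E_1$, so $E_1$ is the only curve on which $P$ vanishes. Together with $P^2>0$, this pins down the positivity of $P$ completely. The paper's argument is shorter and only touches the four components of $\tilde D_1+\tilde D_2+E_1+E_2$.
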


\begin{proof}
Observe that  $D_1^2=1$ and $D_2^2=4$. Every time we blow up the intersection number decreases by one,
so we obtain $\tilde{D_1}^2=-1$ and $\tilde{D_2}^2=2$. 
Similarly,  $(E_1')^2=-1$ and $E_1^2=-2.$ Finally observe that $E_2^2=-1$, $\tilde{D}_1\cdot E_2=\tilde{D}_2\cdot E_2=E_1\cdot E_2=1$ and $\tilde{D}_1\cdot \tilde{D}_2= \tilde{D}_1\cdot E_1= \tilde{D}_2\cdot E_1=0$.
We let the reader check that 
$$
P\cdot \tilde{D}_1=2, \;  P\cdot \tilde{D}_2=5, \;  P\cdot E_1=0, \text{ and }  P\cdot E_2= 1/2.
$$
Since $P$ is an effective divisor, its intersection with any curve not contained in its support is non-negative, being equal to the number of intersection points counted with multiplicity. It follows that $P$ is nef. 
Moreover $N$ is effective, irreducible,  with $N^2 =-1/2<0$ and $P\cdot N=0$. 
The result follows by uniqueness of the Zariski decomposition. 
\end{proof}

\begin{remark}
Consider $[D]=[D_1]+[D_2]$. The class $\{\Theta\}$ is not nef so we can not apply 
Proposition \ref{pro:decompobig}. 
We nevertheless expect that the same conclusion should hold.
This would then lead, for $0<t<1$, to the decomposition
  $$
  \pi^*\omega_t=(1-t) \left( [\tilde{D_1}]+[\tilde{D_2}]+2 [E_1] \right) +(4-3t) [E_2]+\frac{t}{2} [E_1]+t (\beta+dd^c \rho),
  $$
 where $\beta=\pi^* \Theta_D -\frac{1}{2} \Theta_{E_1}-\Theta_{E_2}$ represents the nef and big class $P$,
and $\rho \in {\mathcal E}^1(Y,\beta)$ solves
 $$
 (\beta+dd^c \rho)^2=\frac{e^{\rho}}{|\sigma_{\tilde{D}_1}|^2 |\sigma_{\tilde{D}_2}|^2 |\sigma_{E_1}|^{3/2} |\sigma_{E_2}|^2  } dV_Y.
 $$
In other words 
 $
 \pi^* \omega_t=(1-t) \pi^*[D]+t [E_2]+\frac{t}{2} [E_1]+t \beta_{\rho}.
 $
\end{remark}

%\vfill
%\pagebreak[4]

\section{Isolated singularities}  \label{sec:isolated}

We   consider in this section the case when $\f_0$ has isolated logarithmic singularities.
We obtain a neat understanding of the asymptotics when the singularities are homogeneous,
and analyze another family of toric singularities. We eventually develop the first steps
of the general situation by combining singularities with different dimensions.

\subsection{Homogeneous isolated singularities} \label{sec:isolatedhomogeneous}

We consider here an initial potential $\varphi_0$ which  has an isolated {\it homogeneous logarithmic singularity}
at some point $a \in X$.
In a local chart where $a=0 \in \C^n$, this means that 
$\f_0=\gamma \log |z|^2+r_0(z)$, where $r_0$ is smooth  and $\gamma>0$.
By rescaling we reduce to the case $\gamma=1$,
which is equivalent to  $c(\f_0)=n$ and $\la(\f_0)=1/n$.

\subsubsection{${\mathcal C}^0$-estimates}

\begin{lem} \label{lem:log1}
There exists $C_1>0$ such that  for all $(t,x) \in [0,1/n] \times X$,
$$
\f_t(x) \leq (1-nt) \f_0(x)+C_1t.
$$
\end{lem}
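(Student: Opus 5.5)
We construct an explicit supersolution of the flow of the form $\p_{t,\e}=(1-nt)\f_{0,\e}+C_1t$, where $\f_{0,\e}$ is a smooth regularization of $\f_0$, and then apply the classical maximum principle. This mirrors the proof of Lemma \ref{lem:log}. Concretely, we fix a cutoff near $a$ and set
$$
\f_{0,\e}=\log(|z|^2+\e^2)+r_0
$$
near $a$. We modify this globally so that $\f_{0,\e}$ is a smooth $\omega$-psh function decreasing to $\f_0$ as $\e\to0$. For instance, we write $\f_0=\chi\log|z|^2+(\text{smooth})$ with $\chi$ a cutoff and replace $\log|z|^2$ by $\log(|z|^2+\e^2)$, adding a small multiple of $\omega$ if needed. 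Since $\f_{0,\e}$ and $\f_0$ differ only in a compact region, a uniform constant takes care of the error.

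The key computation is local near $a$. We have
$$
dd^c\log(|z|^2+\e^2)=\frac{i}{\pi}\frac{(|z|^2+\e^2)\,\partial\bar\partial|z|^2-\partial|z|^2\wedge\bar\partial|z|^2}{(|z|^2+\e^2)^2}\le \frac{dd^c|z|^2}{|z|^2+\e^2},
$$
and this form is semipositive. Hence, for $t\in[0,1/n]$,
$$
0\le\omega+dd^c\p_{t,\e}\le B\omega+\frac{(1-nt)\,dd^c|z|^2}{|z|^2+\e^2}\le \frac{B'}{|z|^2+\e^2}\,\omega .
$$
Positivity holds because $1-nt\in[0,1]$ and $\f_{0,\e}$ is $\omega$-psh. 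Taking the $n$-th power gives
$$
(\omega+dd^c\p_{t,\e})^n\le \frac{B'^n}{(|z|^2+\e^2)^n}dV_X .
$$
On the other hand,
$$
\partial_t\p_{t,\e}=-n\f_{0,\e}+C_1=-n\log(|z|^2+\e^2)-nr_0+C_1,
$$
so $e^{\partial_t\p}dV_X=e^{C_1-nr_0}(|z|^2+\e^2)^{-n}dV_X$. This is exactly the right homogeneity: the power $n$ coming from the full rank of $dd^c\log$ matches the coefficient $n$ in the time derivative, which is precisely why the slope is $1/\la(\f_0)=n$. Away from $a$ everything is smooth and bounded, so choosing $C_1$ large, independent of $\e$, yields $(\omega+dd^c\p_{t,\e})^n\le e^{\partial_t\p_{t,\e}}dV_X$ on all of $X$.

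Since $\p_{0,\e}=\f_{0,\e}$ and everything is smooth, the classical parabolic maximum principle gives $\f_{t,\e}\le\p_{t,\e}$, where $\f_{t,\e}$ is the smooth flow from $\f_{0,\e}$. Letting $\e\to0$, $\f_{t,\e}$ decreases to $\f_t$ (Section \ref{sec:construction}) and $\p_{t,\e}$ decreases to $(1-nt)\f_0+C_1t$, which is the claim. The main point to check is that $C_1$ can be chosen uniformly in $\e$. This relies on the sharp estimate $(\omega+dd^c\p)^n\lesssim(|z|^2+\e^2)^{-n}$ near $a$, which holds because $(dd^c\log(|z|^2+\e^2))^n$ is comparable to $\e^2(|z|^2+\e^2)^{-n-1}\le(|z|^2+\e^2)^{-n}$, together with the cross terms $\omega^k\wedge(dd^c\log)^{n-k}$ being of lower order. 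The bound on the cross terms follows from $dd^c\log(|z|^2+\e^2)\le (|z|^2+\e^2)^{-1}dd^c|z|^2$.
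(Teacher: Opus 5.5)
Your proposal is correct and follows the paper's proof essentially step by step: the same smooth supersolution $\p_{t,\e}=(1-nt)\f_{0,\e}+C_1t$ built from $\log(|z|^2+\e^2)$, and the same key inequality $dd^c\log(|z|^2+\e^2)\le (|z|^2+\e^2)^{-1}dd^c|z|^2$, which gives $(\omega+dd^c\p_{t,\e})^n\le e^{C}(|z|^2+\e^2)^{-n}dV_X$, followed by the classical maximum principle and the limit $\e\to 0$. Your direct bound $\omega+dd^c\p_{t,\e}\le B'(|z|^2+\e^2)^{-1}\omega$, in place of the paper's comparison with $(\omega+dd^c\f_{0,\e})^n$, is a harmless and slightly cleaner variant.
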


\begin{proof}
Set $\f_{0,\e}=\chi\circ  \log(|z|^2+\e^2)+r_0$, where $\chi$ is a cut-off function  such that
$\chi \equiv 1$ near $a=0$. The functions $\f_{0,\e}$ are smooth $\omega$-psh functions that decrease
towards $\f$ as $\e \searrow 0$. Observing that
$$
dd^c \log(|z|^2+\e^2) \leq \frac{dd^c |z|^2}{|z|^2+\e^2},
$$
we can use the binomial expansion to conclude that
$$
(\omega+dd^c \f_{0,\e})^n \leq \frac{e^C}{(|z|^2+\e^2)^n} dV_X.
$$
for some uniform  $C>0$.
The smooth $\omega$-psh functions $\p_{t, \varepsilon}=(1-nt) \f_{0,\e}+Ct$ thus satisfy
$$
(\omega+dd^c \p_{t,\e})^n \leq (\omega+dd^c \f_{0,\e})^n \leq e^{\partial_t \p_{t,\e}} dV_X,
$$
i.e. they are super-solutions for the (smooth) flow starting at $\f_{0,\e}$. 
The classical maximum principle ensures that $\f_{t,\e} \leq \p_{t,\e}$,
and the conclusion follows by letting $\e$ tend to zero.
\end{proof}

Translating $\f_0$ if necessary we can assume that  $-2\log(-\f_0)$ is $\omega$-plurisubharmonic, 
%It follows that the function 
hence
$$
x \mapsto \psi_t(x)=(1-nt) \f_0(x)- 2t \log (-\f_0(x))+r_0(x)
$$
is an $\omega$-psh function.

\begin{prop}  \label{pro:c0isolatedhomog}
There exists $C_2>0$ such that 
for all $(t,x) \in (0,1/n) \times X$,
$$
-C_2t +h(t)  \leq \varphi_t- \psi_t \leq C_2t,
$$
where $h(t)=(t \log t-t)-\frac{n-1}{n} \left((1-nt)\log (1-nt) +nt \right)$.
\end{prop}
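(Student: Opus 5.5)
The plan is to prove both inequalities by comparing $\f_t$ with explicit barriers built from $\p_t$: a subsolution gives the lower bound (via the maximality in Theorem \ref{thm:maxflow}), and a supersolution gives the upper bound (via Theorem \ref{thm:pcp} on $\Omega=X\setminus\{a\}$). Everything rests on one local computation. Away from $a$ the functions $\f_0$ and $\p_t$ are smooth, so all error terms there are absorbed into the constant $C_2$. Near $a$, write $s=\log|z|^2$ and treat $\p_t$ as radial, modulo the smooth term $r_0$:
$$
f_t(s)=(1-nt)s-2t\log(-s),\qquad f_t'=(1-nt)+\frac{2t}{-s},\qquad f_t''=\frac{2t}{s^2}.
$$
For such a function, $dd^c f_t(s)$ has $n-1$ angular eigenvalues $f_t'/|z|^2$ and one radial eigenvalue $f_t''/|z|^2$. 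Hence, up to bounded factors and after adding $\omega$,
$$
(\omega+dd^c\p_t)^n \approx \frac{f_t'^{\,n-1}f_t''}{|z|^{2n}} \approx \frac{2t(1-nt)^{n-1}}{|z|^{2n}s^2}+O\!\left(|z|^{-2n+2}\right).
$$
On the other side, $\partial_t\p_t=-n\f_0-2\log(-\f_0)$, so $e^{\partial_t\p_t}dV_X \approx |z|^{-2n}s^{-2}dV_X$. The initial value $\p_0$ agrees with $\f_0$ once $r_0$ is counted only once; I would fix this normalization at the outset.

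\emph{Lower bound.} Since $f_t'\ge 1-nt$ and $\omega$ contributes positively, the computation above gives
$$
(\omega+dd^c\p_t)^n\ge c\,t(1-nt)^{n-1}e^{\partial_t\p_t}dV_X
$$
for a uniform $c>0$, on all of $X\setminus\{a\}$ after shrinking $c$. Now set $\tilde\p_t=\p_t+h(t)-C_2t$. A direct check gives
$$
h'(t)=\log t+(n-1)\log(1-nt),
$$
so $e^{\partial_t\tilde\p_t}=t(1-nt)^{n-1}e^{-C_2}e^{\partial_t\p_t}$. With $e^{-C_2}\le c$, the function $\tilde\p_t$ is a subsolution with $\tilde\p_0=\f_0$, and maximality (Theorem \ref{thm:maxflow}) yields $\tilde\p_t\le\f_t$.

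\emph{Upper bound.} From $f_t'\le C$ and $f_t''\le 2t/s^2$, the remainder term $|z|^{-2n+2}$ is dominated by $|z|^{-2n}s^{-2}$. Hence $(\omega+dd^c\p_t)^n\le e^{C}e^{\partial_t\p_t}dV_X$, so $\p_t+Ct$ is a supersolution that is smooth on $(0,1/n)\times\Omega$. To apply Theorem \ref{thm:pcp}, I would verify its hypotheses as follows.
\begin{itemize}
\item For hypothesis (3), I would take $\rho=2\f_0-C$, rescaled if necessary so that it is strictly $\omega$-psh. It lies below $\p_t+Ct$ because $-2t\log(-\f_0)\ge \f_0-C$.
\item For hypothesis (2), Lemma \ref{lem:log1} gives $\f_t\le(1-nt)\f_0+C_1t$, hence
$$
\f_t-\frac{2}{n}\log(-\f_0)-C\le \p_t .
$$
So one can take $h=-\frac 2n\log(-\f_0)-C$. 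This function has zero Lelong numbers, and I expect it to lie in $\mathcal{E}(X,a\omega)$ because it is $\chi\circ\f_0$ with $\chi$ of logarithmic growth.
\end{itemize}
The comparison principle then gives $\f_t\le\p_t+C_2t$.

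The main obstacle is making the local Monge–Ampère bounds uniform in $t\in(0,1/n)$, especially the lower bound. Near $t=0$ the radial eigenvalue degenerates like $t$, so one must keep the exact factor $t(1-nt)^{n-1}$ rather than a crude constant, and control the cross terms coming from $r_0$ and $\omega$. A second point to check is the membership $h\in\mathcal{E}(X,a\omega)$ of the $\log(-\f_0)$ weight, which I would justify using the standard fact that $\chi\circ\f$ has full Monge–Ampère mass for slowly growing convex $\chi$.
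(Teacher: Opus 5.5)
Your barriers, and the way you compare them with $\varphi_t$, are exactly the paper's. The upper bound comes from feeding the supersolution $\psi_t+Ct$ into Theorem \ref{thm:pcp}, with hypothesis (2) obtained from Lemma \ref{lem:log1} and the weight $-\log(-\varphi_0)$. Your $t$-independent $h=-\frac{2}{n}\log(-\varphi_0)-C$ is cleaner than the paper's $-2t\log(-\varphi_0)$, since the theorem asks for a fixed $h$. The lower bound comes from the subsolution $\psi_t+h(t)-Ct$ with $e^{h'(t)}=t(1-nt)^{n-1}$ and Theorem \ref{thm:maxflow}. One small repair: $2\varphi_0-C$ is not $\omega$-psh, and the factor $2$ is unnecessary. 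For $\varphi_0\leq-1$ one has $\psi_t-\varphi_0=t\bigl(n(-\varphi_0)-2\log(-\varphi_0)\bigr)\geq0$, so $\rho=\varphi_0-C$ already lies below $\psi_t$; this is the paper's choice (its ``$\psi_t\geq\log|z|^2$'').

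The real gap, which your proof shares with the paper's, is the claim that the lower Monge--Amp\`ere bound holds ``on all of $X\setminus\{a\}$ after shrinking $c$''. Away from $a$ you need $(\omega+dd^c\psi_t)^n\gtrsim t\,dV_X$ as $t\to0$. The identity
\[
\omega+dd^c\psi_t=\Bigl(1-nt+\tfrac{2t}{-\varphi_0}\Bigr)(\omega+dd^c\varphi_0)+\tfrac{2t}{\varphi_0^2}\,d\varphi_0\wedge d^c\varphi_0+\Bigl(nt-\tfrac{2t}{-\varphi_0}\Bigr)\omega
\]
shows this holds when $\omega+dd^c\varphi_0$ has $n-1$ eigenvalues bounded below by some $\delta>0$ off a neighbourhood of $a$. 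It fails in general. If $\omega+dd^c\varphi_0\equiv0$ on an open set $U\Subset X\setminus\{a\}$, then $(\omega+dd^c\psi_t)^n=O(t^n)$ on $U$, which is too small when $n\geq2$. The hypotheses allow this: for instance, on $\mathbb{P}^n$ with $\omega=2\omega_{FS}$, take a regularized maximum of $\log\frac{|z_1|^2+\cdots+|z_n|^2}{|z|^2}$ and $2\log\frac{|z_1|^2}{|z|^2}+C'$.

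In that situation the lower bound itself is false for $n\geq 2$. On $U$ the claimed bound reads $\varphi_t-\varphi_0\geq t\log t-Ct$, since $\psi_t-\varphi_0=O(t)$ and $h(t)=t\log t+O(t)$ there. Together with Lemma \ref{lem:log1} this gives $\|\varphi_t-\varphi_0\|_{L^\infty(U)}=O(t|\log t|)$. The function $\varphi_t-\varphi_0$ is psh on $U$, because $dd^c(\varphi_t-\varphi_0)=\omega_t$ there. Chern--Levine--Nirenberg then gives $\int_K e^{\dot\varphi_t}dV_X=O\bigl((t|\log t|)^n\bigr)$ for $K\Subset U$. Jensen and integration in time bound the mean of $\varphi_t-\varphi_0$ on $K$ by $nt\log t+O(t\log|\log t|)$, which contradicts $\varphi_t-\varphi_0\geq t\log t-Ct$.

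The paper only computes in a chart at $a$ and tacitly assumes this non-degeneracy; it reappears as $\omega_\rho\geq c\,\omega$ in Theorem \ref{thm:isolatedhomog}. You should add it as a hypothesis, e.g. that $\omega+dd^c\varphi_0$ is K\"ahler on $X\setminus\{a\}$, rather than absorb it into constants.
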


\begin{proof}
 Since the singularity is isolated, we perform the estimates in a local chart $U$ near  
$a=0 \in U \subset \C^n$.
 Set $L(z):= \log |z|^2$. Then  
$$
\omega+dd^c \psi_t= (1-nt) dd^c L +\frac{2t}{(-L)} dd^c L+ \frac{2t}{(-L)^2} \, dL \wedge d^c L+\eta,
$$
where $\eta$ is a smooth positive $(1,1)$-form. Recall that $(dd^c L)^n$ is a Dirac mass at $0$ and $dL \wedge d^c L$ has rank $1$.
 In the binomial expansion of $(\omega+dd^c \psi_t)^n$, 
 the dominant term outside of $0$ is $2n t (1-nt)^{n-1} (-L)^{-2}(dd^c L)^{n-1} \wedge dL \wedge d^c L$.
 Thus for all $0 <t <1/n$ we obtain
$$
(\omega+dd^c \psi_t)^n \leq c_n \frac{(dd^c L)^{n-1} \wedge dL \wedge d^c L}{(-L)^2} \leq 
e^C \frac{ dV}{|z|^{2n}(\log |z|^2)^2} \leq e^{\partial_t (\p_t+C' t)} dV_X.
$$
Therefore $\psi_t+C' t $ is a supersolution of the flow in $X \setminus \{a\}$.
Theorem \ref{thm:pcp} ensures that $\f_t \leq \p_t+C't$. Observe that Theorem \ref{thm:pcp} can be applied since $\psi_t \geq \log |z|^2$ and 
by Lemma \ref{lem:log1} we have that $\psi_t +C't\geq \varphi_t + t (-2\log (-\varphi_0))$, where $t (-2\log (-\varphi_0))\in \mathcal{E}(X, \omega)$.

\smallskip

The same computation shows that 
$$ 
(\omega+dd^c {\psi}_t)^n \geq   2n t(1-nt)^{n-1} \frac{(dd^c L)^{n-1} \wedge dL \wedge d^c L}{(-L)^2}  
\geq   c t(1-nt)^{n-1}  e^{\partial_t \p_t} dV_X.
$$
We infer that  ${\psi}_t-Ct +h(t)$ is a subsolution for some $C>0$, 
where $h$ is the   ${\mathcal C}^1$-smooth function such that $h(0)=0$ and
$e^{h'(t)}=t(1-nt)^{n-1}$.
Thus  ${\psi}_t-Ct +h(t) \leq \f_t$ as claimed.
\end{proof}

\subsubsection{${\mathcal C}^{1,2}$-estimates}

 Fix $\chi$ a cut-off function in a local chart near $a=0$ such that
 \begin{itemize}
 \item $\chi$ has support in the ball $\mathbb{B}(1/2)$ of radius $1/2$;
 \item $0 \leq \chi \leq 1$ on $X$ and $\chi \equiv 1$ in $\mathbb{B}(1/4)$.
 \end{itemize} 
 For $0<\delta$ small enough, the function 
 $\rho_a=\delta \chi \left[ \log |z|^2 -2\log (-\log |z|^2) \right]$ is  $\omega$-psh.
 
 \begin{defi}
 We set
 $ \beta_a:=\omega+dd^c \rho_a$.
\end{defi} 

The following properties are classical (see \cite{MY83}):
\begin{itemize}
\item $\beta_a$ is a K\"ahler form in $X \setminus \{a\}$ and a K\"ahler current on the whole $X$;
\item the metric $\beta_a$ is complete in $X \setminus \{a\}$.

\item the holomorphic bisectional curvature of $\beta_a$ is bounded.
\end{itemize}

We  now show that the metrics $\omega_t$ are comparable to the model metric $\beta_a$.

\begin{theorem} \label{thm:isolatedhomog}
Set $v_t:=\varphi_t-\psi_t$. There exists $C>0$ such that  for all $t \in (0,1/n)$, 
$$
n\log t -C \leq \dot{v}_t \leq C
\; \; \text{ and } \; \; 
e^{-C/t} \beta_a \leq \omega_{t} \leq e^{C/t} \beta_a.
$$

In particular $\omega_{t}$ is a complete metric in $X\setminus \{a\}$.
\end{theorem}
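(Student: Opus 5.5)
We follow the scheme of Section \ref{C2 est resolution} (itself modeled on \cite{DGL26}), with the model potential $\psi_t$ replacing $u_t+t\rho$ and the model metric $\beta_a$ replacing $\Theta_\rho$. Write $\omega_t=\omega+dd^c\psi_t+dd^c v_t$. Proposition \ref{pro:c0isolatedhomog} gives $-C t+h(t)\le v_t\le Ct$, and $h(t)\ge n(t\log t-t)-C t$ on $(0,1/n)$. All maximum principle arguments are run on $X\setminus\{a\}$. To ensure that extrema are attained away from $a$ and at $t>0$, we approximate $\varphi_0$ by $\varphi_{0,\e}$ as in Lemma \ref{lem:log1} and add small multiples of $\log|z|^2$ (or of $\rho_a$) to the test functions, as in \cite[Lemma 3.4]{DGL26}.

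\emph{Time derivative.} For the upper bound, take $H=t\dot\varphi_t-(\varphi_t-\varphi_0)-(n+\e)t$. The computation already given in Section \ref{C2 est resolution} shows $H\le 0$, hence $\dot\varphi_t\le (\varphi_t-\varphi_0)/t+n$. Combined with $\varphi_t\le (1-nt)\varphi_0+C_1t$ from Lemma \ref{lem:log1}, this gives $\dot\varphi_t\le -n\varphi_0+C$. Since
$$\dot\psi_t=-n\varphi_0-2\log(-\varphi_0)=-nL-2\log(-L)+O(1),$$
this yields only $\dot v_t\le 2\log(-L)+C$. To get rid of the $\log(-L)$ loss, we instead apply the same maximum principle to $H'=t\dot v_t-v_t-Ct$. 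Here we use that $\partial_t\psi_t$ is independent of $t$ and that $\omega+dd^c\psi_t$ is affine in $t$, so that $(\partial_t-\Delta_t)H'\le -\tr_t(\omega+dd^c\psi_t-t\,dd^c\dot\psi_t)+n-C$. We also need $\omega+dd^c\psi_t-t\,dd^c\dot\psi_t=\omega+dd^c\psi_0+\eta'\ge0$ up to a bounded smooth error; checking this is a direct computation using $dd^c(-2\log(-L))\ge0$ near $a$.

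For the lower bound, take $G=\dot v_t+Av_t-n\log t$ and argue as in Section \ref{C2 est resolution}. At a minimum we get $0\ge \tr_t(\beta')+A\dot v_t-n/t-An$, where $\beta'$ is a form comparable to $\beta_a$ arising from $dd^c\dot\psi_t$. Indeed, $-n\,dd^c\varphi_0-2dd^c\log(-\varphi_0)$ is, up to $O(\omega)$, a positive multiple of $dd^c\rho_a$ near $a$, while $A\,\tr_t(\omega_t-\omega-dd^c\psi_t)$ is absorbed using $\omega\le C\beta_a$. Since $\omega_t^n=e^{\dot v_t}e^{\dot\psi_t}dV_X$ and $\beta_a^n\asymp e^{\dot\psi_t}dV_X$ (both are $\asymp |z|^{-2n}L^{-2}dV$), the arithmetic–geometric inequality gives $\tr_t\beta_a\ge c\,e^{-\dot v_t/n}$. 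This forces $\dot v_t\ge n\log t-C$.

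\emph{Laplacian estimate.} Let $u=\tr_{\beta_a}\omega_t$ and consider $\alpha=t\log u-Av_t$. Yau's inequality with the bisectional curvature bound $-B$ of $\beta_a$ gives
$$\Delta_t\log u\ge \frac{\tr_{\beta_a}(dd^c\dot v_t)-\tr_{\beta_a}\Ric(e^{\dot\psi_t}dV_X)}{u}-B\tr_t\beta_a.$$
We need $\Ric$ of the reference volume $e^{\dot\psi_t}dV_X\asymp\beta_a^n$ to be bounded above by $C\beta_a$. Since $\dot\psi_t=\dot\psi$ is $t$-independent, this amounts to $dd^c\dot\psi\ge -C\beta_a$, which holds because $\dot\psi$ is a multiple of the Carlson–Griffiths type potential. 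Repeating verbatim the chain of inequalities of Section \ref{C2 est resolution} with $A=B+1$ gives $t\,\tr_t\beta_a\le \log u-A\dot v_t+C$. Combined with $u\le n(\tr_t\beta_a)^{n-1}e^{\dot v_t}$ and the bounds on $\dot v_t$, this yields $\log u\le C/t$ at the maximum. Hence $\alpha\le C$, so $u\le e^{C/t}$ (the factor $t$ in front of $\log u$ produces the exponential in $1/t$). The reverse inequality follows from $\tr_t\beta_a\le n u^{n-1}e^{-\dot v_t}\le e^{C'/t}$. Completeness is then inherited from that of $\beta_a$.

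The main obstacle is the upper bound $\dot v_t\le C$. The naive bound loses the $\log(-L)$ term, so one must verify the required positivity of $\omega+dd^c\psi_0$ modulo controlled errors, and justify attainment of the extremum of $H'$ away from $a$, where $\psi$ is singular.
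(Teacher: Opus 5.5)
Your scheme, with $G=\dot v_t+Av_t-n\log t$ followed by $\alpha=t\log u-Av_t$, is the paper's. However, your lower bound on $\dot v_t$ rests on a false claim, so there is a genuine gap.

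\textbf{The gap in the lower bound.} You assert that $dd^c\dot\psi_t=-n\,dd^c\varphi_0-2dd^c\log(-\varphi_0)$ is, up to $O(\omega)$, a positive multiple of $dd^c\rho_a$. With $L=\log|z|^2$,
\[
dd^c\dot\psi_t=-n\,dd^cL+\frac{2\,dd^cL}{-L}+\frac{2\,dL\wedge d^cL}{L^2}+O(\omega).
\]
In the $n-1$ complex directions orthogonal to $z$, $dd^cL$ has size $|z|^{-2}$. So $-n\,dd^cL$ dominates and $dd^c\dot\psi_t$ is very negative there, whereas $dd^c\rho_a\geq\delta\,dd^cL$.

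Hence $\tr_t(dd^c\dot\psi_t)$, which is what $\ddot v_t-\Delta_t\dot v_t$ produces, has no sign. Your inequality $0\geq\tr_t(\beta')+A\dot v_t-n/t-An$ therefore does not follow. The positivity must come from the term you say is ``absorbed'', namely $-A\Delta_tv_t=-An+A\,\tr_t(\omega+dd^c\psi_t)$. One needs $A(\omega+dd^c\psi_t)+dd^c\dot\psi_t\geq c\,\beta_a$.

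The coefficient of $dd^cL$ in this form is $A(1-nt)-n$, so it only works while $1-nt$ stays bounded below in terms of $1/A$. This is the paper's key computation. For $t\leq 1/n-2\eta$ one has $\omega+dd^c\psi_t\geq\eta t\,\omega_\rho$ and $\omega+dd^c(\psi_t+\eta\dot\psi_t)\geq\eta\,\omega_\rho$. Here $\omega_\rho=\omega+dd^c(\varphi_0-2\log(-\varphi_0))$ is comparable to $\beta_a$. Taking $A>1/\eta$ then gives $A(\omega+dd^c\psi_t)+dd^c\dot\psi_t\geq\omega_\rho$.

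So the argument runs on $(0,1/n-2\eta]$, with constants depending on $\eta$. Your proposal never registers that the estimate degenerates as $t\to 1/n$.

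Your Ricci bound repeats the misreading of $\dot\psi_t$ as a multiple of $\rho_a$. There, though, the needed inequality $dd^c\dot\psi_t\geq -C\beta_a$ does hold, since $dd^cL\leq C\beta_a$. In fact those terms cancel against $\partial_t u$, leaving only $\tr_{\beta_a}\Ric(dV_X)$.

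\textbf{The same positivity in the Laplacian estimate.} This step cannot be ``repeated verbatim'' from Section~\ref{C2 est resolution}. There, $A\,\tr_t(\omega_t-t\Theta_\rho-(1-t)R_0)\leq An-At\,\tr_t\Theta_\rho$ was automatic from $R_0\geq 0$. Here you must use $\omega+dd^c\psi_t\geq\eta t\,\omega_\rho$, again only for $t$ away from $1/n$, to get the good term $-A\eta t\,\tr_t\omega_\rho$. Consequently $A$ must be of order $(B+C_1)/\eta$ rather than $B+1$.

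\textbf{The upper bound is not the obstacle you describe.} You used Lemma~\ref{lem:log1} instead of the sharp bound $v_t\leq Ct$ from Proposition~\ref{pro:c0isolatedhomog}, which you had already quoted. Since $\psi$ is affine in $t$ with $\psi_0=\varphi_0$, we have $\psi_t-\varphi_0=t\dot\psi_t$. Inserting $\varphi_t\leq\psi_t+Ct$ into $\dot\varphi_t\leq(\varphi_t-\varphi_0)/t+n$ gives $\dot v_t\leq C+n$ at once; this is the paper's argument.

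Your $H'=t\dot v_t-v_t-Ct$ is identically $t\dot\varphi_t-(\varphi_t-\varphi_0)-Ct$, so your second maximum principle is just the first one again. The positivity you worried about is exact: $\omega+dd^c\psi_t-t\,dd^c\dot\psi_t=\omega+dd^c\varphi_0\geq 0$. That step is correct, but the removal of the $\log(-L)$ loss comes entirely from $v_t\leq Ct$.
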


\begin{proof}
{\it First order estimate.}
Consider
\[
\rho:=  \varphi_0 + (-2 \log (-\varphi_0)).
\]
Observe that it is $\omega$-psh (up to adding a constant to $\f_0$) 
and $\omega_\rho$ is uniformly comparable with $\beta_a$. 
In particular $\omega_{\rho} \geq c \omega_X$, for some constant $c>0$. 
Fix $\eta>0$ small. For $0\leq t\leq \frac{1}{n}-2\eta$ we 
obtain  $(1-nt)\geq \eta t$ and a direct computation yields
$$
	\omega+dd^c \psi_t \geq \eta t \omega_\rho,
\; \; 	\text{ and } \; \;
	\omega + dd^c (\psi_t + \eta\dot{\psi}_t) \geq \eta \omega_\rho. 
$$

We now consider 
$
G=\dot{v}_t + A v_t -H(t),
$
where $A>1/ \eta$ is a large constant (to be chosen later), $v_t= \varphi_t-\psi_t$
and $H(t)=n\log t$. 
Approximating $\varphi_0$ by smooth decreasing approximants $\varphi_{0,j}$, 
we can ensure that $G$ attains its minimum at some $(t_0,x_0) \in (0,T_0]\times X\setminus \{a\}$, 
where $T_0<1/n-2\eta$. 
Since $ \ddot{v}_t = \ddot{\varphi}_t =\Delta_t \dot{\varphi}_t$,
standard computations yield
\begin{flalign*}
	0 & \geq (\partial_t-\Delta) G = \ddot{v}_t + A\dot{v}_t-H'(t) - \Delta_t(\dot{v}_t)-A\Delta_t(v_t) -A { \tr}_t(\omega_{\varphi_t}-\omega_{\psi_t}) \\
	& \geq   A\dot{v}_t -H'(t) + \tr_t(A\omega+dd^c (A\psi_t+ \dot{\psi}_t))-An\\
	& \geq A\dot{v}_t -H'(t)-An + \tr_t(\omega_{\rho}). 
\end{flalign*}

The computation from Proposition \ref{pro:c0isolatedhomog} ensures that $\omega_{\rho}^n$ is comparable to $e^{\dot{\psi}_t} dV_X$. Using $\omega_{t}^n=e^{\dot{\varphi}_t} dV_X= e^{\dot{v}_t+\dot{\psi}_t} dV_X$ 
and the elementary inequality 
\[
\tr_t(\omega_{\rho}) \geq n \left (\omega_{\rho}^n/\omega_{t}^n \right )^{1/n} \geq e^{-C-\dot{v}_t/n}
\]
together with $Ax+e^{-x/n} \geq ce^{-x/n} $ for some $c>0$, we obtain
$
0 \geq ce^{-\dot{v}_t/n} -H'(t) -C,
$
which yields $\dot{v}_t\geq -n \log (H'(t)+C)+n\log c$.  Thus 
$$
G(t_0,x_0) \geq -n \log (H'(t_0) +C) +n\log c+Av_{t_0}(x_0)-H(t_0).
$$ 
Since $v$ is bounded (Proposition \ref{pro:c0isolatedhomog}) and $H(t)=n\log t$, 
we get $G(t_0,x_0)\geq -C$ hence
\[
\dot{v}_t \geq n\log t- C. 
\] 

For the upper bound of $\dot{v}_t$ we use the quasi concavity of $\varphi_t$ \cite[Proposition  1.15]{DGL26}, together with the $C^0$-estimate (Proposition \ref{pro:c0isolatedhomog}) to get
\begin{eqnarray*}
\dot{v}_t= \dot{\varphi_t} -\dot{\psi}_t 
&\leq& \frac{\varphi_t-\varphi_0 }{t} +n +n\varphi_0+2\log (-\varphi_0)\\
&\leq& \frac{\psi_t-\varphi_0 }{t} +C_2+n+n\varphi_0+2\log (-\varphi_0)=C_2+n.
\end{eqnarray*}

\medskip

\noindent {\it Second order estimate.}
We now set $u:= \tr_{\omega_{\rho}}(\omega_{t})$ and consider
\[
\alpha = t \log u - Av_t. 
\]
We can again ensure that the maximum of $\alpha$ is attained at  $(t_0,x_0)\in (0,T_0]\times X\setminus\{a\}$. 
At this point, following the same arguments as in the proof of \cite[Lemma 3.4]{DGL26}, 
the maximum principle yields 
\begin{flalign*}
0 \leq (\partial_t -\Delta_t) \alpha 
&= \log u + t\frac{\partial_t u}{u} -A\dot{v}_t+A \tr_t(\omega_{t}-\omega_{\psi_t}) -t \Delta_t \log u\\
	&\leq  \log u + t\frac{\partial_t u}{u} -A\dot{v}_t+An-A\eta t \tr_t(\omega_{\rho}) -t \Delta_t \log u\\
	&\leq    \log u -A\dot{v}_t+An-A\eta t \tr_t(\omega_{\rho})+t(B+C_1)\tr_{t}(\omega_{\rho})\\
	& \leq \log u + C - A \dot{v}_t -t(A\eta-B-C_1) \tr_t(\omega_{\rho})\\
	& \leq \log u + C - A \dot{v}_t -t\tr_t(\omega_{\rho}),
\end{flalign*}
where $B$ is a lower bound for the holomorphic bisectional curvature of $\omega_{\rho}$, $C_1$ is so that 
${\rm tr}_{\omega_{\rho}}(\Ric (dV_X))\leq C_1$ and we choose $A=1+\frac{B+C_1}{\eta}$. 

Using that $(n-1)\log x \leq x/2 +C$ for some large $C>0$, we obtain that at $(t_0, x_0)$,
\[
(n-1)\log t + (n-1)\log \tr_t(\omega_{\rho}) \leq (n-1)\log ( \log u +C -A\dot{v}_t)\leq \frac{\log u}{2} -\dot{v}_t +C. 
\] 
The elementary inequality 
\[
\tr_{\omega_{\rho}} (\omega_{t}) \leq n \tr_{t}(\omega_{\rho})^{n-1}\frac{\omega_t^n}{\omega_{\rho}^n} \leq C\tr_t(\omega_{\rho})^{n-1} e^{\dot{v}_t}, 
\]
can now be used to obtain, at $(t_0, x_0)$,
\[
\log u \leq C + (n-1)\log \tr_t(\omega_{\rho}) + \dot{v}_t \leq \frac{\log u}{2} -(n-1)\log t +C. 
\]
Thus $\log u \leq C- (n-1)\log t_0$ and $\alpha(t_0,x_0)\leq C$ 
(recall that $v_t$ is bounded by Proposition \ref{pro:c0isolatedhomog}). We conclude that
$
\log \tr_{\omega_{\rho}}(\omega_{t}) \leq C/t.
$
%or equivalently $\omega_{\varphi_t}\leq e^{C/t} \omega_\rho$. 

The lower bound on $\dot{v}_t$ yields the reverse inequality, since
\[
\tr_{\omega_{t}} (\omega_{\rho}) 
\leq n \tr_{\omega_{\rho}}(\omega_{t})^{n-1}\frac{\omega_{\rho}^n}{\omega_{t}^n} 
\leq C  \tr_{\omega_{\rho}}(\omega_{t})^{n-1}  e^{-\dot{v}_t}.
\]
\end{proof}

\subsection{Toric isolated singularities}

The situation is more intricate for singularities that are not homogeneous.
We analyze in this section a family of isolated toric singularities.

\subsubsection{Monge-Amp\`ere potential at the critical exponent}

We first establish the following result of independent interest.

\begin{lem} \label{lem:closeopen}
Fix $\varphi_0 \in \PSH(X,\omega)$ and set  $c=c(\f_0)$. 
There exists a unique function $\rho\in \mathcal{E}(X,\omega)$ such that 
	\begin{equation} \label{eq_crit}
	(\omega+dd^c \rho)^n = e^{c \rho -c\varphi_0} dV_X.
	\end{equation}
\end{lem}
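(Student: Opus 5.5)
The proof will use monotone approximation of the non-integrable density $e^{-c\f_0}$ by truncated ones. Uniqueness will come from the standard comparison principle for measures of the form $e^{c\rho}\mu$. Existence will reduce to a non-degeneracy statement for the approximants; that statement is the main difficulty, and I only sketch an attack on it.

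\emph{Uniqueness.} Let $\rho_1,\rho_2\in\mathcal{E}(X,\omega)$ be two solutions of \eqref{eq_crit}. On the set $\{\rho_1<\rho_2\}$ we have
$$(\omega+dd^c\rho_1)^n=e^{c\rho_1-c\f_0}dV_X\leq e^{c\rho_2-c\f_0}dV_X=(\omega+dd^c\rho_2)^n.$$
The comparison principle in the class $\mathcal{E}(X,\omega)$ gives
$$\int_{\{\rho_1<\rho_2\}}(\omega+dd^c\rho_2)^n\leq\int_{\{\rho_1<\rho_2\}}(\omega+dd^c\rho_1)^n.$$
Combined with the reverse pointwise inequality of the densities, this forces $e^{c\rho_2-c\f_0}=e^{c\rho_1-c\f_0}$ a.e. on $\{\rho_1<\rho_2\}$, which is possible only if that set has Lebesgue measure zero. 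Hence $\rho_1\geq\rho_2$ a.e., so everywhere by plurisubharmonicity. By symmetry $\rho_1=\rho_2$. This is the argument of \cite[Theorem 3.1]{BEGZ10}, and the non-finiteness of $e^{-c\f_0}dV_X$ plays no role in it.

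\emph{Existence via approximation.} Set $\f_0^j=\max(\f_0,-j)$ and $f_j=e^{-c\f_0^j}$. These are bounded densities increasing to $e^{-c\f_0}$. By \cite{GZbook} (Ko{\l}odziej's estimates together with a fixed point or variational argument), there is a unique bounded $\omega$-psh $\rho_j$ with
$$(\omega+dd^c\rho_j)^n=e^{c\rho_j}f_j\,dV_X.$$
Since $f_j\leq f_{j+1}$, the comparison principle gives $\rho_{j+1}\leq\rho_j$. Moreover $\int_X e^{c\rho_j}f_j\,dV_X=\int_X\omega^n=:V$ for all $j$. Taking $\rho_0$ as an upper bound, the sequence is uniformly bounded above. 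It therefore decreases to some $\rho$, which is either $\omega$-psh or identically $-\infty$.

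\emph{Main obstacle: non-degeneracy.} One must show that $\rho\not\equiv-\infty$ and that $\rho\in\mathcal{E}(X,\omega)$. If this holds, continuity of the Monge--Amp\`ere operator along decreasing sequences in $\mathcal{E}$ and monotone convergence of $e^{c\rho_j}f_j$ yield \eqref{eq_crit}. I would attack the obstacle in two steps.

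First, bound $\sup_X\rho_j$ from below by contradiction. Suppose $s_j:=\sup_X\rho_j\to-\infty$ and set $u_j=\rho_j-s_j$. Then $u_j$ decreases, up to subsequences converging in $L^1$, to some $u\in\PSH(X,\omega)$ with $\sup_X u=0$. The normalization becomes
$$\int_X e^{cu_j}f_j\,dV_X=Ve^{-cs_j}\to+\infty.$$
I would then exploit the equation itself: the masses of $(\omega+dd^c u_j)^n$ equal $V$, and Skoda/Demailly--Koll\'ar semicontinuity of integrability exponents should control $\int e^{cu}e^{-c\f_0}$ along the sequence. The aim is to derive either finiteness of this integral or a contradiction with the blow-up above. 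I expect this step to require genuine work, since it is exactly where the critical exponent $c=c(\f_0)$ enters.

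Second, prove that the limit has full Monge--Amp\`ere mass. I would establish a uniform energy bound, for instance $\sup_j E_\chi(\rho_j)<\infty$ for a weight $\chi$ with $\chi(-\infty)=-\infty$, by testing the equation against $\chi(\rho_j)$. On $\{\rho_j<-M\}$ the factor $e^{c\rho_j}$ is small, and this should compensate the divergence of $e^{-c\f_0}$. A uniform bound of this kind passes to the limit and gives $\rho\in\mathcal{E}_\chi(X,\omega)\subset\mathcal{E}(X,\omega)$.
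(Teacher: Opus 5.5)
Your uniqueness argument and your approximation scheme ($\varphi_0^j=\max(\varphi_0,-j)$, with bounded solutions $\rho_j$ decreasing in $j$) are fine and agree with the paper. The gap is the step you flag yourself. Nothing in the proposal shows that $\rho=\lim_j\rho_j\not\equiv-\infty$, nor that $\rho\in\mathcal{E}(X,\omega)$, and the attack you sketch cannot work as stated.

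\emph{Step 1.} The blow-up $\int_X e^{cu_j}f_j\,dV_X\to+\infty$ contradicts nothing. By the openness theorem, $\int_X e^{-c\varphi_0}dV_X=+\infty$ at $c=c(\varphi_0)$. Hence $\int_X e^{cu-c\varphi_0}dV_X=+\infty$ for every bounded $u$ (for instance $u\equiv 0$). Demailly--Koll\'ar semicontinuity gives no information exactly at the critical exponent.

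\emph{Step 2.} Testing against $\chi(\rho_j)$ controls the set $\{\rho_j\geq -M\}$ by $|\chi(-M)|\int_X\omega^n$. On $\{\rho_j<-M\}$, however, the factor $e^{c\rho_j}$ compensates $e^{-c\varphi_0^j}$ only if you already know how $\rho_j$ compares with $\varphi_0$ near the poles of $\varphi_0$. Naive H\"older bounds just bring back $\int_X f_j\,dV_X\to+\infty$. So both steps presuppose the quantitative information that is missing.

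The missing idea is a uniform lower bound $\rho_j\geq(1-b^{-1})\varphi_0-C(b)$ valid for every $b>1$. The paper obtains it from the envelope $v_j=P_\omega\bigl(b\rho_j-(b-1)\varphi_0^j\bigr)$. By \cite[Lemma 4.4]{DDL21}, its Monge--Amp\`ere measure is carried by the contact set, where $c\rho_j-c\varphi_0^j=\frac{c}{b}(v_j-\varphi_0^j)$. Therefore
\[
(\omega+dd^c v_j)^n\leq b^n e^{\frac{c}{b}v_j}\,e^{-\frac{c}{b}\varphi_0^j}\,dV_X .
\]
The densities $e^{-\frac{c}{b}\varphi_0^j}\leq e^{-\frac{c}{b}\varphi_0}$ are uniformly in $L^p$ for $1<p<b$, since $cp/b<c(\varphi_0)$. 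Ko{\l}odziej's estimate then gives $v_j\geq -C(b)$, which is exactly the bound above.

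Equivalently, let $w_b$ be the bounded solution of $(\omega+dd^c w)^n=b^ne^{\frac{c}{b}(w-\varphi_0)}dV_X$. Then $(1-b^{-1})\varphi_0+b^{-1}w_b$ is a subsolution lying below every $\rho_j$.

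This lower bound gives $\rho\not\equiv-\infty$, and full mass follows without any energy estimate. Indeed, $\rho$ is less singular than $(1-b^{-1})\varphi_0$, whose non-pluripolar Monge--Amp\`ere mass is at least $b^{-n}\int_X\omega^n$. Witt Nystr\"om's monotonicity theorem, followed by letting $b\downarrow 1$, gives $\rho\in\mathcal{E}(X,\omega)$.

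A minor point: $e^{c\rho_j}f_j$ is not monotone in $j$, so monotone convergence does not apply when passing to the limit in the equation. Use Fatou for one inequality, and for the other use $(\omega+dd^c\rho_j)^n\leq e^{c\rho_k-c\varphi_0}dV_X$ for $j\geq k$.
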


Let us recall that $e^{-c\f_0}$ is not integrable at the critical exponent $c=c(\f_0)$, as follows from
the solution of the openness conjecture 
%of Demailly-Kollar \cite{DK01} 
by  Berndtsson \cite{Bo15} (see also \cite{GZh15,H14}).
The above result shows that one can recover integrability at the critical exponent by 
twisting by a function with mild singularities.

\begin{proof}
Rescaling we can assume that $c=1$.
We are going to prove the following slightly more general result:
if $\psi^{\pm}$ are quasi-psh functions such that
$e^{\psi^+-\psi^-}\in L^{1-\varepsilon}$ for all $\varepsilon\in (0,1)$,
then there exists a unique  $u\in \mathcal{E}(X,\omega)$ such that 
	\[
	(\omega+dd^c u)^n = e^{u+\psi^+-\psi^-} dV_X.
	\] 
The lemma follows by taking $\psi^+=0$ and $\psi^-=\f_0$.

 We approximate $\psi^-$ by a decreasing sequence $\psi^-_j$ of bounded $\omega$-psh functions.
 It follows from \cite[Corollary 11.9 and Theorem 12.1]{GZbook} that
there exists a unique function $u_j \in \PSH(X,\omega) \cap L^{\infty}(X)$ that solves
\[
(\omega+dd^c u_j)^n = e^{u_j+\psi^+-\psi^-_j}dV_X. 
\]

The comparison principle ensures that $j \mapsto u_j$ is decreasing.
Fix $b>1$. By \cite[Lemma 4.4]{DDL21} the envelope $v_j:=P_{\omega}(bu_j-(b-1)\psi_j^-)$ satisfies
\[
(\omega+dd^c v_j)^n \leq {\bf 1}_{\{v_j=bu_j-(b-1)\psi_j^- \}}b^n (\omega+dd^c u_j)^n \leq b^ne^{b^{-1}v_j+\psi^+-b^{-1}\psi_j^-} dV_X. 
\]
 
The $L^p$-norm of the densities $e^{\psi^+-b^{-1}\psi_j^-}$ are uniformly bounded in $j$, whenever
$p<b$. Indeed,
$$
\int_X e^{p\psi^+-\frac{p}{b}\psi_j^-} dV_X
\leq C \int_X e^{-\frac{p}{b}\psi_j^-} dV_X \leq  C \int_X e^{-\frac{p}{b}\psi^-} dV_X.
$$
It follows therefore from \cite[Theorem 12.1]{GZbook} that $v_j\geq -C(b)$ is uniformly bounded below.
Thus $u=\lim_j u_j \in \PSH(X,\omega)$ with  $u\geq (1-b^{-1})\psi^- -b^{-1}C(b)$. 
 Since $(1-b^{-1})$ is arbitrarily small we infer, 
 using the main result of \cite{WN19},
 that $\int_X (\omega+dd^c u)^n=\int_X \omega^n$, hence $u\in \mathcal{E}(X,\omega)$. 
 Uniqueness follows from the comparison principle.
\end{proof}

\subsubsection{Uniform estimates}

Assume $\varphi_0 \in \PSH(X,\omega)$ is locally bounded away from some point $a \in X$,
 and  
 $$
 \varphi_0= \log (|z_1|^{2\alpha_1}+\cdots+|z_n|^{2\alpha_n})+r_0,
 $$ 
 in a local chart where $a=0$,  $\alpha_j >0$, and $r_0$ is smooth.
 It is classical that $c(\varphi_0)=\sum_j \alpha_j^{-1}$.

\begin{prop} \label{pro:c0toric}
	The logarithmic singularity of $\varphi_t$ is provided by $(1-ct)\varphi_0$. Moreover
	$$
	(1-ct)\varphi_0+ct\rho+ n(t\log t-t) -C(t+1)\leq \varphi_t\leq (1-ct)\varphi_0+C(t+1),
	$$  
	where $C>0$, $c=c(\varphi_0)$, and $\rho$ is the unique solution of \eqref{eq_crit}.
	\end{prop}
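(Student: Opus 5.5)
We prove the two inequalities separately, in the same spirit as Proposition \ref{pro:uniformbig} and Proposition \ref{pro:c0isolatedhomog}. The lower bound comes from exhibiting an explicit subsolution and invoking the maximality of $\f_t$ (Theorem \ref{thm:maxflow}). The upper bound comes from an approximation and supersolution argument as in Lemma \ref{lem:log1}. Throughout we may rescale so that $c=c(\f_0)$ is fixed, and we work on the time interval $[0,1/c]$ (for larger $t$ the statement is covered by $\la(\f_t)=0$, see Theorem \ref{thm:arnold}).

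\emph{Lower bound.} Let $\rho\in\mathcal E(X,\omega)$ be the solution of \eqref{eq_crit} given by Lemma \ref{lem:closeopen}, and set
$$
\p_t=(1-ct)\f_0+ct\rho+n(t\log t-t)-Ct .
$$
For $0\le t\le 1/c$ this is a convex combination of $\f_0$ and $\rho$, hence $\omega$-psh, and $\p_0=\f_0$. Moreover
$$
\omega+dd^c\p_t\ge ct(\omega+dd^c\rho),
$$
so that
$$
(\omega+dd^c\p_t)^n\ge c^nt^n e^{c\rho-c\f_0}dV_X=e^{\partial_t\p_t+n\log c+C}dV_X .
$$
The $\log t$ term of $\partial_t\p_t$ is absorbed by the $t^n$ factor. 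Choosing $C=n\log c$ (or larger), $\p_t$ is a subsolution, at least where $\rho$ and $\f_0$ are smooth. Because $\rho$ is only in $\mathcal E$, regularity is an issue. Our first attempt is to use the fact that the maximal solution is the envelope of subsolutions in the pluripotential sense (\cite{DNL17}), so that smoothness is not needed. Failing that, we would run the argument with the smooth approximants $u_j$ from the proof of Lemma \ref{lem:closeopen} and $\f_0$ replaced by smooth $\f_{0,j}$, and then pass to the limit. This yields $\p_t\le\f_t$, which is the lower bound up to the harmless $C(t+1)$ term.

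\emph{Upper bound.} As in Lemma \ref{lem:log1}, we approximate by
$$
\f_{0,\e}=\log\Bigl(\sum_j|z_j|^{2\alpha_j}+\e^2\Bigr)+r_0
$$
near $a$, glued to $\f_0$ away from $a$ by a cutoff. We then seek a uniform bound
$$
(\omega+dd^c\f_{0,\e})^n\le e^{C}e^{-c\f_{0,\e}}dV_X ,
$$
which is the core estimate. With it, $(1-ct)\f_{0,\e}+Ct$ is a supersolution of the smooth flow starting at $\f_{0,\e}$, since its Monge--Amp\`ere measure is dominated by $(\omega+dd^c\f_{0,\e})^n\le e^{C-c\f_{0,\e}}dV_X$. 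The classical maximum principle then gives $\f_{t,\e}\le(1-ct)\f_{0,\e}+Ct$, and letting $\e\to0$ gives the upper bound. The statement that the logarithmic singularity of $\f_t$ is exactly that of $(1-ct)\f_0$ follows by combining the two bounds, since $\rho\in\mathcal E$ has zero Lelong numbers.

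\emph{Main obstacle.} The key step is the Monge--Amp\`ere density estimate for $\f_{0,\e}$. We would prove it by an anisotropic scaling (toric) argument. In the coordinates $w_j=z_j^{\alpha_j}$, or via the weighted homogeneity $z_j\mapsto\lambda^{1/\alpha_j}z_j$, the function $\log(\sum|z_j|^{2\alpha_j})$ is quasi-homogeneous, and its Monge--Amp\`ere measure off the origin behaves like
$$
\prod_j|z_j|^{2\alpha_j-2}\Big/\Bigl(\sum_j|z_j|^{2\alpha_j}\Bigr)^{n}\cdot\prod_j\alpha_j^2 .
$$
This scales exactly like $(\sum_j|z_j|^{2\alpha_j})^{-c}$ with $c=\sum_j\alpha_j^{-1}$. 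The lower-degree factors $|z_j|^{2\alpha_j-2}$ need care when $\alpha_j<1$. We would handle them with an AM--GM comparison, checking in the toric (log-coordinate) picture that the ratio is bounded. If this direct bound fails, we would fall back on comparing with the Newton-polytope description and a weighted blow-up, where $\f_0$ becomes divisorial and Lemma \ref{lem:analytsing} applies with the weighted discrepancy $\sum_j\alpha_j^{-1}-1$.
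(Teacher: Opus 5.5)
Your plan is the paper's proof. The lower bound comes from the subsolution $(1-ct)\f_0+ct\rho+n(t\log t-t)-Ct$, built from the solution $\rho$ of \eqref{eq_crit}. The upper bound comes from the supersolution $(1-ct)\f_{0,\e}+Ct$ of the regularized flow, which rests on the density bound $(\omega+dd^c\f_{0,\e})^n\le e^{C-c\f_{0,\e}}\,dV_X$. Two small points on the lower bound:
\begin{itemize}
\item the requirement is $n\log c+C\ge0$, so take $C\ge -n\log c$ rather than $C=n\log c$;
\item your fallback for the regularity of $\rho$ works if you use matching approximants, i.e.\ smooth $u_j$ solving $(\omega+dd^c u_j)^n=e^{cu_j-c\f_{0,j}}dV_X$. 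Then $(1-ct)\f_{0,j}+ctu_j+n(t\log t-t)-Ct$ is a smooth subsolution of the flow starting at $\f_{0,j}$ (the paper takes the subsolution property for granted).
\end{itemize}

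The real issue is the density bound, which you leave to a scaling heuristic plus AM--GM. Equal weighted homogeneity is not enough: you must bound the degree-zero ratio. You must also treat the mixed terms $\omega^{n-m}\wedge(dd^c\f_{0,\e})^m$. Their densities are $\lesssim\prod_{j\in J}|z_j|^{2(\alpha_j-1)}(S+\e^2)^{-m}$, with $S=\sum_j|z_j|^{2\alpha_j}$ and $|J|=m$. (Incidentally, $\log S$ is maximal off the origin, so your displayed expression is only an upper bound for the regularized measure.)

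If all $\alpha_j\ge1$ the bound is immediate. Write $x_j=|z_j|^{2\alpha_j}$; then $|z_j|^{2(\alpha_j-1)}=x_j^{1-1/\alpha_j}\le S^{1-1/\alpha_j}$. Hence each term is at most $(S+\e^2)^{-c_J}\le(S+\e^2)^{-c}$, where $c_J=\sum_{j\in J}\alpha_j^{-1}\le c$ and $S+\e^2\le1$. This is what the paper's concavity argument over the index sets $J$ achieves.

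If some $\alpha_j<1$, the bound is false, so no AM--GM comparison can give it. Near a point of $\{z_j=0\}\setminus\{a\}$, $e^{-c\f_{0,\e}}$ stays bounded. Meanwhile $\omega^{n-1}\wedge dd^c\f_{0,\e}$ has density of order $|z_j|^{2(\alpha_j-1)}\to+\infty$. So $(1-ct)\f_0+Ct$ is not a supersolution there, and $\f_{0,\e}$ is not even smooth. The paper's argument has the same restriction, since its weights $p_j=(1-\alpha_j^{-1})/(m-c_J)$ must be nonnegative.

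A simpler way out than your weighted blow-up (which would need an orbifold version of Lemma \ref{lem:analytsing} and rational $\alpha_j$) is to change the barrier's weight. Use $\frac{1}{N}\log\sum_j|z_j|^{2N\alpha_j}$ with $N\ge\max_j\alpha_j^{-1}$:
\begin{itemize}
\item it differs from $\log S$ by a bounded function;
\item all its exponents $N\alpha_j$ are $\ge1$;
\item the computation above applies to it with the same $c$, at the cost of an extra additive constant.
\end{itemize}
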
	
	
	When $\alpha_1=\cdots=\alpha_n=1$ this result follows from Proposition \ref{pro:c0isolatedhomog},
	as one can check that in this case $c\rho \sim -2 \log (-\log |z|^2)$.

	\begin{proof}
We approximate $\varphi_0$ by $\varphi_{0,\varepsilon}= \log (e^{\varphi_0}+\varepsilon)+r_0$, $\varepsilon>0$. 
We let the reader check that 
		\begin{flalign*}
(\omega + dd^c \varphi_{0,\varepsilon})^n 
&\lesssim \left(\omega + \frac{dd^c e^{\varphi_0}}{e^{\varphi_0}+\varepsilon}\right)^n 
\sim  \sum_J \frac{|z_{J}|^{2(\alpha_J-1)}}{(e^{\varphi_0}+\varepsilon)^m}  dV_X.  
		\end{flalign*}
		Here, $J=(j_1,...,j_m)$ with $m\leq n$ and $|z_J|^{\alpha_J}= \prod_{j\in J} |z_j|^{\alpha_j}$.

We claim that 
		\[
		\frac{|z_1|^{2(\alpha_1-1)}\cdots |z_m|^{2(\alpha_m-1)}}{(e^{\varphi_0}+\varepsilon)^m}  \leq e^{-c\varphi_{0,\varepsilon}}\sim \frac{1}{(|z_1|^{2\alpha_1}+\cdots+|z_n|^{2\alpha_n}+\varepsilon)^c}.
		\]
		The latter is equivalent to
		\begin{equation} \label{eq:toric}
		\left(|z_1|^{2(\alpha_1-1)}\cdots |z_m|^{2(\alpha_m-1)}\right)^{\frac{1}{m-c}}  \leq  (|z_1|^{2\alpha_1}+\cdots+|z_n|^{2\alpha_n}+\varepsilon).
		\end{equation}
		%Fixing $J$, the inequality corresponding to $J$ is a consequence of 
		%\[
		%q_1x_1+...+q_mx_m \geq x_1^{q_1}\cdots x_m^{q_m}, \;  \text{where}\; 
Set $x_j:= |z_j|^{2\alpha_j}$, $q_j=(1-\alpha_j^{-1})/(m-c)$ and $c_J=\sum_{j\in J} \alpha_j^{-1}\leq c$.
The concavity of $\log$ yields,
since $p_j:=(1-\alpha_j^{-1})/(m-c_J)\leq q_j$ is such that $\sum_{j\in J} p_j=1$,
		\[
x_1^{q_1}\cdots x_m^{q_m} \leq 	x_1^{p_1}\cdots x_m^{p_m} \leq 	p_1x_1+...+p_mx_m \leq x_1+...+x_m,
		\]
		when $|z_j|$ is less than $1$, showing \eqref{eq:toric}.
		
 	It follows that $u_t=(1-ct) \varphi_{0,\varepsilon}+Ct$ is a supersolution of the smooth approximating
 	flow, since
 	$(\omega+dd^c u_t)^n \leq (\omega + dd^c \varphi_{0,\varepsilon})^n  \leq e^{\partial_t u_t} dV_X$,
 	for $C$ large enough.
		Letting $\varepsilon \rightarrow 0$, we conclude that $\varphi_t \leq (1-ct)\varphi_{0}+Ct +r_0 $.
		The upper bound follows since $r_0$ is bounded.

\smallskip

Consider now $\psi_t(x)=(1-ct)\varphi_0 + ct \rho +n(t\log t-t)-Ct+r_0$.
A direct computation shows that 
		\[
		(\omega+dd^c \psi_t)^n \geq c^n t^n (\omega+dd^c \rho)^n \geq e^{c\rho -c\varphi_0 + n \log t + n \log c}\omega_X^n\geq e^{\dot \psi_t} dV_X.
		\]
		Thus  $\psi_t$ is a subsolution of the flow with $\psi_0=\varphi_0$, 
		%hence $\psi_t\leq \varphi_t$.
		and the desired lower bound follows.
	\end{proof}

\subsection{Combination of singularities}

We consider here an initial potential $\varphi_0$ such that
\begin{itemize}
\item $\f_0=m \log |\sigma|^2+ \gamma u_0$ for some irreducible smooth divisor $D=(\sigma=0)$, some $m,\gamma>0$,
and a quasi-psh function $u_0$ which is smooth but at one point $a \in X$;
\item $u_0$ has an isolated {\it homogeneous logarithmic singularity}
at $a$: 
in a local chart when $a=0 \in \B^n \subset \C^n$,  
$u_0(z)= \log |z|^2+v_0(z)$, where $v_0$ is a smooth function.
\end{itemize}

When $a \notin (\sigma=0)$ the asymptotic behavior is a simple combination of the findings of \cite{DGL26}
together with those from Section \ref{sec:isolatedhomogeneous}, so we assume in what follows that
$a \in (\sigma=0)$.

\subsubsection{First order asymptotics}

\begin{lem}
One has  $\la(\f_0)=\max\left(m, \frac{\gamma+m}{n}\right)$.
\end{lem}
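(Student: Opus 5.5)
We will compute $\lambda(\f_0)$ from a log resolution, via the formula $c(\f_0)=\min_j \frac{1+b_j}{m_j}$ recalled in Section \ref{subsec_analytic sing}. Since $\f_0$ is smooth away from $D\cup\{a\}$, it suffices to compute $c(\f_0,x)$ at points of $D$. Away from $a$ this is a smooth divisor with coefficient $m$, so $\lambda(\f_0,x)=m$ there. The content of the statement is therefore the local computation at $a$, where we want
$$
\lambda(\f_0,a)=\max\left(m,\tfrac{\gamma+m}{n}\right).
$$
The global value is then the maximum of these local values, since $\lambda(\f_0)=\sup_x \lambda(\f_0,x)$.

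In the local chart near $a=0$, with $D=(z_1=0)$ smooth (after a coordinate change), we have
$$
\f_0=m\log|z_1|^2+\gamma\log|z|^2+O(1).
$$
Let $\pi:Y\to X$ be the blow up of $a$, with exceptional divisor $E$. Then:
\begin{itemize}
\item $\pi^*\log|z|^2=\log|\sigma_E|^2+O(1)$;
\item $\pi^*\log|z_1|^2=\log|\sigma_{\tilde D}|^2+\log|\sigma_E|^2$, where $\tilde D$ is the strict transform of $D$;
\item $\pi^*\f_0$ has divisorial singularities $m[\tilde D]+(m+\gamma)[E]$.
\end{itemize}
The divisor $\tilde D+E$ has simple normal crossings because $D$ is smooth at $a$, so $\pi$ is a log resolution. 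The discrepancies are $b_{\tilde D}=0$ and $b_E=n-1$. The local formula \eqref{int formula} (together with the divisor $\tilde D$, whose image passes through $a$) then gives
$$
c(\f_0,a)=\min\left(\frac1m,\frac{n}{m+\gamma}\right),
$$
and inverting yields the claimed value of $\lambda(\f_0,a)$.

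A more elementary alternative is to check integrability of $|z_1|^{-2mc}|z|^{-2\gamma c}$ near $0$ directly. In polar coordinates, the radial integral converges if and only if $c(m+\gamma)<n$. On the sphere, the angular integral converges if and only if $cm<1$. This reproduces the same threshold and avoids the resolution formalism.

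The one delicate point is the formula \eqref{int formula}, which is stated with $\pi(D_j)=x$. For the divisor $\tilde D$, whose image is $D\ni a$ rather than the point $a$ itself, the correct condition is $x\in\pi(D_j)$. We will either note this explicitly or rely on the direct integrability computation above, which settles the non-integrability when $cm\ge1$ without appealing to \eqref{int formula}. Either way, the bound $\lambda(\f_0,a)\ge m$ also follows from upper semicontinuity of $x\mapsto\lambda(\f_0,x)$, since $\lambda(\f_0,x)=m$ at nearby points of $D$.
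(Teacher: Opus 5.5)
Your proposal is correct and follows the paper's argument: blow up $a$, note that $\pi^*\varphi_0$ has coefficients $m$ along $\tilde D$ and $m+\gamma$ along $E$, with discrepancies $0$ and $n-1$, and read off the integrability threshold. The only difference is that the paper applies this to the pulled-back integral $\int_X e^{-c\varphi_0}dV_X$ globally, whereas you compute it pointwise and then take the supremum over $x$. Your remark that \eqref{int formula} should be read with $x\in\pi(D_j)$ rather than $\pi(D_j)=x$ is accurate, and your polar-coordinate check is a valid independent confirmation when $n\geq 2$ (for $n=1$ the angular factor is trivial, but the answer is unchanged).
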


\begin{proof}
Let $\pi:Y \rightarrow X$ denote the blow up of $X$ at the point $a$. We obtain
$$
\int_X e^{-c \f_0} dV_X=\int_Y \frac{|\sigma_E|^{2(n-1)}}{|\sigma_E|^{2c(m+\gamma)} |\tilde{\sigma}|^{2cm}} dV_Y,
$$
where $(\sigma_E=0)$ denotes the exceptional divisor, $\tilde{D}=(\tilde{\sigma}=0)$ denotes the strict transform of $D$,
and $dV_Y$ is a smooth volume form on $Y$.
Since $E$ and $\tilde{D}$ have simple normal crossings, the conclusion follows from \eqref{int formula} since in this case $m_1=m, m_2=\gamma+m, b_1=0, b_2=n-1$.
\end{proof}

\begin{lem}\label{lem_combSing}
For all $(t,x)\in \left(0,\min \left(m, \frac{\gamma}{n-1}\right)\right)\times X$,
$$
\f_t \leq (m-t) \log |\sigma|^2+ (\gamma -(n-1)t) u_0+Ct,
$$
for some uniform constant $C>0$.
\end{lem}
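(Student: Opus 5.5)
We construct an explicit supersolution of the smooth approximating flows and apply the classical maximum principle, exactly as in Lemma \ref{lem:log} and Lemma \ref{lem:log1}. We approximate $\f_0$ by the smooth $\omega$-psh functions
$$
\f_{0,\e}=m\log(|\sigma|^2+\e^2)+\gamma u_{0,\e},
\qquad
u_{0,\e}=\chi\log(|z|^2+\e^2)+v_0,
$$
with $\chi$ a cut-off equal to $1$ near $a$. These decrease to $\f_0$, so the smooth flows $\f_{t,\e}$ decrease to $\f_t$. Set
$$
\p_{t,\e}=(m-t)\log(|\sigma|^2+\e^2)+(\gamma-(n-1)t)u_{0,\e}+Ct .
$$
For $t<\min(m,\gamma/(n-1))$ both coefficients are positive, and $\p_{t,\e}$ is a convex combination of $\f_{0,\e}$ with smooth positive multiples of potentials whose $dd^c$ is bounded below. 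Hence $\omega+dd^c\p_{t,\e}\geq 0$, possibly after slightly enlarging $\omega$ or shrinking the time interval, as done implicitly in the paper. Also $\p_{0,\e}=\f_{0,\e}$ and
$$
e^{\partial_t\p_{t,\e}}=\frac{e^{C}}{(|\sigma|^2+\e^2)\,(|z|^2+\e^2)^{n-1}}
$$
near $a$, up to the bounded factor $e^{-(n-1)v_0}$, and $e^C/(|\sigma|^2+\e^2)$ away from $a$.

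The key step is the pointwise inequality
$$
(\omega+dd^c\p_{t,\e})^n\le \frac{e^{C}}{(|\sigma|^2+\e^2)(|z|^2+\e^2)^{n-1}}\,dV_X .
$$
We bound
$$
dd^c\log(|\sigma|^2+\e^2)\le C\omega+\frac{i\partial\sigma\wedge\overline{\partial\sigma}}{|\sigma|^2+\e^2}
\quad\text{and}\quad
dd^c\log(|z|^2+\e^2)\le \frac{dd^c|z|^2}{|z|^2+\e^2}.
$$
This gives
$$
\omega+dd^c\p_{t,\e}\le C\Big(\omega+\frac{i\partial\sigma\wedge\overline{\partial\sigma}}{|\sigma|^2+\e^2}+\frac{dd^c|z|^2}{|z|^2+\e^2}\Big).
$$
We expand the $n$-th power binomially. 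The term $i\partial\sigma\wedge\overline{\partial\sigma}$ has rank one, so it appears at most once. The other factors contribute at most $(|z|^2+\e^2)^{-(n-1)}$ when the $\sigma$-term is present, and at most $(|z|^2+\e^2)^{-n}$ when it is absent. The second case is the obstacle, since the pure term $\big(dd^c|z|^2/(|z|^2+\e^2)\big)^n$ carries one extra factor $(|z|^2+\e^2)^{-1}$ instead of $(|\sigma|^2+\e^2)^{-1}$.

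We handle it using $a\in D$: since $\sigma$ vanishes at $0$, we have $|\sigma|^2\le C|z|^2$, hence $|\sigma|^2+\e^2\le C(|z|^2+\e^2)$. Therefore
$$
(|z|^2+\e^2)^{-n}\le C\,(|\sigma|^2+\e^2)^{-1}(|z|^2+\e^2)^{-(n-1)},
$$
and the pure term is also controlled. This is precisely why the coefficient of $u_0$ can only decrease at rate $n-1$ rather than $n$: the divisor absorbs one direction. Mixed terms with fewer factors are dominated in the same way, using that $|z|^2+\e^2$ and $|\sigma|^2+\e^2$ are bounded above. Away from $a$ the situation is that of Lemma \ref{lem:log} and is harmless.

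It follows that $\p_{t,\e}$ is a supersolution of the smooth flow with the same initial data. The classical maximum principle gives $\f_{t,\e}\le\p_{t,\e}$, and letting $\e\to0$ yields
$$
\f_t\le (m-t)\log|\sigma|^2+(\gamma-(n-1)t)u_0+Ct ,
$$
with $C$ uniform, since the constants in the binomial estimate do not depend on $\e$.
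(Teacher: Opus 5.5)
Your proposal is correct and follows essentially the paper's own argument. Both use the same $\varepsilon$-regularized supersolution, bound $\omega+dd^c\psi_{t,\varepsilon}$ by $\omega$, the rank-one term $\frac{i\partial\sigma\wedge\overline{\partial\sigma}}{|\sigma|^2+\varepsilon^2}$ and $\frac{dd^c|z|^2}{|z|^2+\varepsilon^2}$, absorb the pure $|z|$-term via $|\sigma|^2\le C|z|^2$ (since $a\in D$), and conclude with the classical maximum principle and $\varepsilon\to 0$. The one loose point is your claim that $\omega+dd^c\psi_{t,\varepsilon}\ge 0$ globally, which the convex-combination remark does not really establish; it is not needed, because the supersolution inequality only has to hold where $\omega+dd^c\psi_{t,\varepsilon}\ge 0$, and at a positive interior maximum of $\varphi_{t,\varepsilon}-\psi_{t,\varepsilon}$ one automatically has $\omega+dd^c\psi_{t,\varepsilon}\ge\omega+dd^c\varphi_{t,\varepsilon}>0$.
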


\begin{proof}
Observe that $t<\min(m,\frac{\gamma}{n-1}) \Rightarrow t<\min (m,\frac{\gamma+m}{n})$.
 For any $\e>0$, we set
 $$
 \p_{t,\e}=(m-t) \log(|\sigma|^2+\e^2)+(\gamma-(n-1)t)  \log(|z|^2+\e^2)+v_0.
 $$
An elementary computation gives 
 $$
 e^{\partial_t  \p_{t,\e}} dV_X \leq \frac{e^C}{(|\sigma|^2+\e^2)(|z|^2+\e^2)^{n-1}} dV_X,
 $$
 for $C>0$ large enough.
  On the other hand 
  $$
  \omega+dd^c \p_{t,\e} \leq B \omega+ B\frac{i \partial \sigma \wedge \overline{ \partial \sigma}}{|\sigma|^2+\e^2} + \frac{B \omega}{|z|^2+\e^2}
  $$
Therefore, using that $i \partial \sigma \wedge \overline{ \partial \sigma} \wedge \omega^{n-1}\geq c\omega^n$ and $|\sigma|^2\leq |z|^2$, we obtain 
  $$
 (\omega+dd^c  \p_{t,\e})^n \leq \frac{B'}{(|\sigma|^2+\e^2)(|z|^2+\e^2)^{n-1}} dV_X,
  $$
  showing that $\p_{t,\e} +Ct$ is a smooth supersolution of the flow starting at  $\f_{0,\e}$.
  The   parabolic maximum principle ensures that
  $\f_{t,\e} \leq \p_{t,\e}$ and the conclusion follows by letting $\e \rightarrow 0$.
\end{proof}

\subsubsection{Uniform estimates}

\begin{prop} \label{pro:c0combine}
Set $$\psi_t:=  (m-t) \log |\sigma|^2+ (\gamma -(n-1)t) u_0  +2t (-\log (-\log|\sigma|^2)) + 2t (-\log(-u_0)).
$$ For all $(t,x) \in \left(0,\min(m, \frac{\gamma}{n-1}) \right)\times X$ we have
$$h(t) \leq \f_t - \psi_t \leq Ct,$$ 
where $C>0$ and $h$ is a log-Lipschitz function such that $h(0)=0$. 
Thus for all $x \in X$,
$$
\lambda(\f_t,x)=\max \left( \lambda(\f_0,x)-t, 0 \right).
$$
\end{prop}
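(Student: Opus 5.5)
The plan is to prove the two-sided bound by the same sub/supersolution scheme used for Proposition \ref{pro:c0isolatedhomog}, and then to read off the Arnold multiplicities from it.

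\textbf{Upper bound.} I would show that $\psi_t+Ct$ is a supersolution in $X\setminus D$ and then apply Theorem \ref{thm:pcp}.

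The correction terms $-2t\log(-\log|\sigma|^2)$ and $-2t\log(-u_0)$ are quasi-psh after translation. Their Hessians contain the rank-one pieces
\[
\frac{2t}{(\log|\sigma|^2)^2}\, d\log|\sigma|^2\wedge d^c\log|\sigma|^2
\quad\text{and}\quad
\frac{2t}{u_0^2}\, du_0\wedge d^c u_0,
\]
plus terms bounded by $\frac{2t}{-\log|\sigma|^2}$ and $\frac{2t}{-u_0}$ times the curvature forms.

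Near $a$, I work in coordinates where $D=\{z_1=0\}$, which is possible since $D$ is smooth at $a$. Then
\[
\omega+dd^c\psi_t\le C\Big(\omega+\frac{i\partial\sigma\wedge\overline{\partial\sigma}}{|\sigma|^2}+dd^c\log|z|^2+\frac{t\,du_0\wedge d^cu_0}{u_0^2}\Big).
\]
The top power of this is dominated by
\[
\frac{C\,dV}{|\sigma|^2(\log|\sigma|^2)^2\,|z|^{2(n-1)}}\Big(1+\frac{1}{(\log|z|^2)^2}\Big).
\]
This uses the fact that $(dd^c\log|z|^2)^{n-1}\wedge i\partial z_1\wedge\overline{\partial z_1}\lesssim |z|^{-2(n-1)}\,dV$, exactly as in the proof of Lemma \ref{lem_combSing}.

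I then compare this with
\[
e^{\partial_t\psi_t}=|\sigma|^{-2}\,e^{-(n-1)u_0}\,(-\log|\sigma|^2)^{-2}\,(-u_0)^{-2}.
\]
The problematic cross term is the one where $\frac{dL\wedge d^cL}{L^2}$ (with $L=\log|z|^2$) is not paired with a $(\log|z|^2)^{-2}$ factor. I expect it to be absorbed using $|\sigma|\le|z|$, which gives $(-\log|\sigma|^2)^{-2}\ge$ something comparable, and $(-u_0)^{-2}\ge c(\log|\sigma|^2)^{-2}$ where needed. \textbf{This balancing of the two Poincaré-type factors near $a$, in the region where $|\sigma|\ll|z|$ and in its complement, is the main obstacle.} I would split into the regions $\{|\sigma|\le|z|^2\}$ and $\{|\sigma|\ge |z|^2\}$ and, if necessary, slightly enlarge the coefficient of the $-2t\log(-\log|\sigma|^2)$ term.

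To apply Theorem \ref{thm:pcp}, two hypotheses must be checked:
\begin{itemize}
\item $\psi_t\ge \log|\sigma|^2+A\log|z|^2$, which gives a strictly psh barrier $\rho$.
\item $\psi_t+Ct\ge \varphi_t+h$ with $h=t(-2\log(-\log|\sigma|^2)-2\log(-u_0))\in\mathcal E$. This follows from Lemma \ref{lem_combSing}.
\end{itemize}

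\textbf{Lower bound.} I expect $\psi_t-Ct+h(t)$ to be a subsolution. The binomial expansion has the dominant mixed term
\[
t^2(m-t)(\gamma-(n-1)t)^{n-2}\,\frac{i\partial\sigma\wedge\overline{\partial\sigma}}{|\sigma|^2(\log|\sigma|^2)^2}\wedge\frac{du_0\wedge d^cu_0}{u_0^2}\wedge (dd^cu_0)^{n-2}.
\]
Alternatively, one can pair $\frac{i\partial\sigma\wedge\overline{\partial\sigma}}{|\sigma|^2}$ with $(dd^cu_0)^{n-1}$ and the $t$-term from $\log(-\log|\sigma|^2)$. Since $(dd^c L)^{n-1}\wedge i\,dz_1\wedge d\bar z_1\gtrsim |z|^{-2(n-1)}$ on the region where $|z_1|$ is small, I get
\[
(\omega+dd^c\psi_t)^n\ge c\,g(t)\,e^{\partial_t\psi_t}\,dV_X,
\]
where $g(t)$ is a polynomial in $t$ vanishing at most to order $n$ at $0$. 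Taking $h$ with $h(0)=0$ and $e^{h'}=c\,g$ makes $h$ log-Lipschitz, with $h(t)\sim t\log t$. The maximality of $\varphi_t$ (Theorem \ref{thm:maxflow}) then gives $\psi_t-Ct+h(t)\le\varphi_t$.

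\textbf{Arnold multiplicities.} Since $\varphi_t-\psi_t$ is bounded, $\lambda(\varphi_t,x)=\lambda(\psi_t,x)$. The $\log\log$ terms have zero Lelong number and do not change integrability thresholds, by Hölder's inequality and since $e^{\epsilon\log(-\log)}$ is $L^p$-harmless. So $\lambda(\psi_t,x)$ equals the multiplicity of
\[
(m-t)\log|\sigma|^2+(\gamma-(n-1)t)u_0.
\]
By the computation in the previous lemma, at $a$ this is
\[
\max\Big(m-t,\ \frac{(m-t)+\gamma-(n-1)t}{n}\Big)=\lambda(\varphi_0,a)-t.
\]
At points of $D\setminus\{a\}$ it is $m-t$, and elsewhere it is $0$. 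This yields $\lambda(\varphi_t,x)=\max(\lambda(\varphi_0,x)-t,0)$.
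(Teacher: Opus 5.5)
\textbf{There is a genuine gap, and it sits exactly at the step you flag as ``the main obstacle''.} Your route is the paper's: show that $\psi_t+Ct$ is a supersolution and apply Theorem~\ref{thm:pcp} for the upper bound, and use a subsolution for the lower bound. The supersolution step cannot be completed.

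To see this, take $D=\{z_1=0\}$ and $L=\log|z|^2$, and look at the cone $|z_1|\ge\delta|z|$ transverse to $D$. At $z=(z_1,0,\dots,0)$ one has $dd^cL\propto |z_1|^{-2}\sum_{j\ge 2} i\,dz_j\wedge d\bar z_j$. Hence on that cone
\[
(dd^cL)^{n-1}\wedge\frac{i\,dz_1\wedge d\bar z_1}{|z_1|^2(\log|z_1|^2)^2}\approx\frac{dV}{|z|^{2n}L^2},
\]
and $(\omega+dd^c\psi_t)^n$ dominates $c\,t(\gamma-(n-1)t)^{n-1}$ times this term. The term $(dd^cL)^{n-1}\wedge L^{-2}dL\wedge d^cL$ behaves the same way. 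On the other hand, on the cone both $\log(-\log|\sigma|^2)$ and $\log(-u_0)$ equal $\log(-L)+O(1)$, so $e^{\partial_t\psi_t}\approx |z|^{-2n}L^{-4}$. It follows that $(\omega+dd^c\psi_t)^n\ge c\,t(\gamma-(n-1)t)^{n-1}L^2e^{\partial_t\psi_t}dV_X$ there, so $\psi_t+Ct$ is not a supersolution for any $C$.

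The paper's proof asserts that the triple product $(dd^cL)^{n-2}\wedge\frac{dL\wedge d^cL}{L^2}\wedge\frac{d\sigma\wedge d^c\sigma}{|\sigma|^2(\log|\sigma|^2)^2}$ is dominant. That form is only comparable to $(1-|z_1|^2/|z|^2)\,e^{\partial_t\psi_t}dV$ and vanishes on the normal line, since both rank-one factors are multiples of $i\,dz_1\wedge d\bar z_1$ there. So the paper's argument has the same hole.

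None of your proposed remedies works:
\begin{itemize}
\item $|\sigma|\le|z|$ is an equality up to constants on the cone.
\item Replacing $(-u_0)^{-2}$ by the smaller $c(\log|\sigma|^2)^{-2}$ only shrinks the right-hand side you must dominate.
\item Enlarging the coefficient of $-2t\log(-\log|\sigma|^2)$ increases $dd^c\psi_t$ while decreasing $e^{\partial_t\psi_t}$, which makes the mismatch worse.
\end{itemize}

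\textbf{The upper bound $\varphi_t-\psi_t\le Ct$ is itself false.} Transversally to $D$ the flow produces only the correction $-2t\log(-L)$, whereas $\psi_t$ carries $-4t\log(-L)$.
\begin{itemize}
\item In dimension one (reading $\gamma/(n-1)=+\infty$), integrating Proposition~\ref{prop:complete} gives $\varphi_t\ge(m+\gamma-t)\log|z|^2-2t\log(-\log|z|^2)-C_t$ near $a$.
\item For $n\ge2$, blow up $a$. Then $\pi^*\varphi_0=m\log|\tilde\sigma|^2+(m+\gamma)\log|\sigma_E|^2+O(1)$ with $b_E=n-1$. Whenever Proposition~\ref{pro:uniformbig} applies one gets $\varphi_t\circ\pi=u_t+t\rho+O(1)$, with $\rho=-2\log(-\log|\sigma_E|^2)-2\log(-\log|\tilde\sigma|^2)+O(1)$. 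An example is $X=\PP^2$, $D$ a line through $a$, $0<m<\gamma$, $m+\gamma<1$; there both results concern $t\in(0,m)$. This yields $\varphi_t-\psi_t\ge 2t\log(-L)-C_t$ as $z\to a$ transversally to $D$.
\end{itemize}

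\textbf{What survives.} The lower bound holds, once you correct your region claim. $(dd^cL)^{n-1}\wedge i\,dz_1\wedge d\bar z_1\approx|z_1|^2|z|^{-2n}dV$ is large where $|z_1|\gtrsim|z|$, not where $|z_1|$ is small. The triple product handles $|z_1|\le|z|/2$, and $t\,(dd^cL)^{n-1}\wedge\frac{i\,dz_1\wedge d\bar z_1}{|z_1|^2(\log|z_1|^2)^2}$ handles the rest, giving $g(t)\approx t^2$. There is no factor $m-t$, since $dd^c\log|\sigma|^2$ is bounded off $D$.

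The multiplicity formula also survives, but you must derive it without the two-sided bound. The lower bound gives $\lambda(\varphi_t,x)\le\lambda(\psi_t,x)=\max(\lambda(\varphi_0,x)-t,0)$, and Theorem~\ref{thm:arnold} gives the reverse inequality.
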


\begin{proof}
We need to show that $\psi_t \pm C(t)$ provides sub/super solutions to the flow. To do this we only need to compute near $a$. 
Set $L=\log |z|^2$ and observe that
 $$
 \omega+dd^c \psi_t \leq c \left( \omega+dd^c L
+\frac{d\sigma \wedge d^c \sigma}{|\sigma|^2 (-\log |\sigma|^2)^2}
+\frac{dL \wedge d^c L}{(-L)^2} \right).
 $$
 Using that $(dd^c L)^n=0$ in $X \setminus D$ and that $d\sigma \wedge d^c \sigma$ and $dL \wedge d^c L$
 have rank $1$, we observe that, 
 $(dd^c L)^{n-2} \wedge \frac{dL \wedge d^c L}{(-L)^2}  \wedge \frac{d\sigma \wedge d^c \sigma}{|\sigma|^2 (-\log |\sigma|^2)^2}$
 is the dominant term in $(\omega+dd^c \psi_t)^n$, hence
 $$
 (\omega+dd^c \psi_t)^n \leq  \frac{c'}{|\sigma|^2(-\log|\sigma|^2)^2 |z|^{2(n-1)} (-\log|z|^2)^2} dV_X
 \leq e^{\partial_t(\psi_t+Ct)} dV_X.
 $$
The above computation shows that $\psi_t+Ct$ is a supersolution of the flow. 
Lemma \ref{lem_combSing} ensures
 $$
 \psi_t\geq \varphi_t -2t\log (-\log|\sigma|^2) -2t\log(-u_0)
 $$ 
 with $-2\log (-\log|\sigma|^2) $ and $-2\log(-u_0)$ belonging to the class $\mathcal{E}(X, \omega)$.
 Since $\psi_t$ is smooth outside $D$, it follows therefore from Theorem \ref{thm:pcp}
 that $\f_t \leq \p_t+Ct$.

 \smallskip
 
We now prove the lower bound. A similar computation yields
 $$
 \omega+dd^c \psi_t \geq (\gamma-(n-1)t) dd^c L + 2t \frac{d\sigma \wedge d^c \sigma}{|\sigma|^2 (-\log |\sigma|^2)^2}
+2t\frac{dL \wedge d^c L}{(-L)^2}
 $$
 hence
 \begin{eqnarray*}
  (\omega+dd^c \psi_t)^n &\geq & 4n(n-1)t^2 (\gamma-(n-1)t)^{n-2} (dd^c L)^{n-2} \wedge \frac{d\sigma \wedge d^c \sigma}{|\sigma|^2 (-\log |\sigma|^2)^2}
\wedge \frac{dL \wedge d^c L}{(-L)^2} \\
&= & c_n t^2 (\gamma-(n-1)t)^{n-2} \frac{\omega^n}{|\sigma|^2(-\log|\sigma|^2)^2 |z|^{2(n-1)} (-\log|z|^2)^2} \\
& \geq & t^2 (\gamma-(n-1)t)^{n-2} e^{\partial_t(\psi_t-Ct)} dV_X \\
&=& e^{\partial_t(\psi_t+h(t))} \omega^n,
 \end{eqnarray*}
 where $h$ is the function such that $h(0)=0$
 and $e^{h'(t)}=t^2 (\gamma-(n-1)t)^{n-2}e^{-C}$.
\end{proof}

\subsubsection{${\mathcal C}^2$-estimates} \label{sec:combineC2}

One can then argue as in the proof of Theorem \ref{thm:c2hyp} and 
show that $\omega_t$ is quasi-isometric to the metric 
$\omega_D+\beta_a$, where $\omega_D$ denotes the Poincaré metric in $X \setminus D$,
while $\beta_a$ is the model metric from Section \ref{sec:isolatedhomogeneous}.
We leave the details to the interested reader.

\end{document}